\newtheorem{defn}{Definition}[section]
\newtheorem{prop}[defn]{Proposition}
\newtheorem{lem}[defn]{Lemma}
\newtheorem{thm}[defn]{Theorem}
\newtheorem{rem}[defn]{Remark}
\newtheorem {conj}[defn]{Conjecture}
\newcommand {\ZZ}{{\mathds Z}}
\newcommand {\XX}{{\mathcal X}}
\newcommand {\C}{{\mathds C}}
\newcommand {\CC}{{\mathcal C}}
\newcommand {\Q}{{\mathds Q}}
\newcommand {\QQ}{{\bf Q}}
\newcommand {\OO}{{\mathcal O}}
\newcommand {\HH}{{\mathds H}}
\newcommand {\LL}{{\mathcal L}}
\newcommand {\M}{{\mathcal M}}
\newcommand {\CP}{{\mathds P}}
\newcommand {\GG}{{\mathcal G}}
\newcommand {\AB}{{\mathds A}}
\newcommand {\TL}{\tilde{L}}
\newcommand {\TK}{{\tilde{K}}}
\newcommand{\w}{{\omega}}
\newcommand{\TC}{\tilde{C}}
\newcommand{\BC}{\bar{C}}
\def\Ker{\operatorname{Ker}}
\def\div{\operatorname{div}}
\def\deg{\operatorname{deg}}
\def\mod{\operatorname{mod}}
\def\Im{\operatorname{Im}}
\def\ord{\operatorname{ord}}
\def\reg{\operatorname{reg}}
\title{Algebraic cycles and values of Green's functions}
\author{Ramesh Sreekantan}
\begin{document}
%\sffamily
\baselineskip=17pt

\maketitle

\begin{abstract}
	We construct indecomposable cycles in the motivic cohomology group $H^3_{\M}(A,\Q(2))$ where $A$ is an Abelian surface over a number field or the function field of a base.  When $A$ is the self product of the universal  elliptic curve over a modular curve, these cycles can be used to prove  algebraicity results for values of higher Green's functions, similar to a conjecture of Gross, Kohnen and Zagier. We formulate a conjecture which relates our work with the recent work of Bruinier-Ehlen-Yang on the conjecture of Gross-Kohnen-Zagier. 
	
	{\bf MSC (2000):} 11G15,11G18,14K22,14C25,14G35,19E15
\end{abstract}

\tableofcontents

\section{Introduction}

This paper has two themes. The first is the construction of `interesting' cycles in the motivic cohomology of Abelian surfaces - specifically the generic fibre of the universal family over a Siegel modular threefold. The second is the relation of those cycles with the algebraicity of values of Green's functions. This allows us to formulate a conjectural relationship between weakly holomorphic modular forms, on the one hand and motivic cycles, on the other and suggests a motivic interpretation of some Borcherd's lifts. 

\subsection{Motivic cycles.}

Let $A$ be an Abelian surface over a field $K$. The cycles we construct lie in the motivic cohomology group $H^3_{\M}(A,\Q(2))$. This  is the same as the higher Chow group $CH^2(A,1)$ and the group $H^1(A,{\mathcal K}_2)$. In this group there are certain cycles which are `decomposable' -- namely coming from other motivic cohomology  groups using the product structure -- but the cycles we construct are indecomposable. 

We have the following theorem (Theorem \ref{motiviccycle}): 

\begin{thm}

Let $\Delta$ be an integer which belongs to a certain infinite family of discriminants of real quadratic fields. There exists an indecomposable motivic cycle $\Xi^c_{\Delta}$ in $H^3_{\M}(A_{\eta},\Q(2))$ where $A_{\eta}$ is the generic abelian surface over the Siegel modular threefold with full level $2$ structure. This cycle is defined in the fibres over the complement of a component $H^c_{\Delta}$ of the Humbert surface $H_{\Delta}$ and has a boundary supported  precisely on that component. 
	
\end{thm}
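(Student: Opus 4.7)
The plan is to realize $\Xi^c_\Delta$ as an explicit element of the higher Chow group $CH^2(A_\eta,1)$, presented in Gersten style as a formal sum $\sum_i (C_i, f_i)$ with each $C_i \subset A_\eta$ an irreducible curve and $f_i \in k(C_i)^\times$, subject to the closedness condition $\sum_i \div(f_i) = 0$ in $Z^2(A_\eta)$. The isomorphism $CH^2(A_\eta,1) \cong H^3_\M(A_\eta,\Q(2))$ will then package this data into the required motivic class. The curves $C_i$ will come from the extra algebraic structure carried by fibres over the Humbert component $H^c_\Delta$: for $t \in H^c_\Delta$ the abelian surface $A_t$ admits an embedding of the real quadratic order $\OO_\Delta$ of discriminant $\Delta$, and its N\'eron--Severi group gains divisor classes not present generically.

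First, I would produce a family $\Gamma \subset A$ of curves over the Siegel threefold whose generic fibre is irreducible but whose fibre over $t \in H^c_\Delta$ splits into two components $\Gamma^+_t, \Gamma^-_t$, the eigen-curves for the $\OO_\Delta$-action, obtained by translating a theta divisor by the endomorphism attached to $\sqrt{\Delta}$. Concretely this can be built from the modular embedding of the Hilbert modular surface for $\OO_\Delta$ into the Siegel threefold, together with descent of an appropriate modular curve. I would then construct the rational functions $f_i$ on the components from theta functions whose quasi-period lattice is adapted to the $\OO_\Delta$-polarization, arranging the torsion divisors of the $f_i$ to cancel in $Z^2(A_\eta)$. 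The resulting divisor identity on the generic fibre will be verified by a residue calculation pulled up to the Hilbert modular surface, where the splitting $\Gamma^+ \cup \Gamma^-$ is available universally and the combinatorics of the cancellation is transparent.

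The indecomposability of $\Xi^c_\Delta$, i.e.\ that its class does not lie in the image of the product map $\Pic(A_\eta) \otimes k(\eta)^\times \to CH^2(A_\eta,1)$, I would establish by computing the Beilinson regulator into real Deligne cohomology $H^3_\D(A_{\eta,\C},\R(2))$ and verifying that it does not land in the image of the decomposable regulator. I expect this step to be the main obstacle: the regulator must be evaluated on an explicit transcendental $2$-cycle, which reduces to an integral of theta pull-backs over modular curves and ultimately to a value of a higher Green's function --- this is also what ties the construction to the second theme of the paper --- and non-vanishing then has to be extracted from the arithmetic of $\Delta$. Finally, the boundary statement will follow from the localization sequence for $CH^2$: the family $\Gamma$ and the functions $f_i$ are globally defined on fibres over the open complement of $H^c_\Delta$, while over $H^c_\Delta$ the splitting of $\Gamma$ forces $\sum \div(f_i)$ to acquire a support concentrated on the fibres above $H^c_\Delta$, and this support is the claimed boundary.
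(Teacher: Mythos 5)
Your overall shape is right --- produce a family of curves over the Siegel threefold that is generically irreducible and splits into two components over the chosen component $H^c_\Delta$, attach functions to obtain a Gersten-style chain with vanishing divisor, and read the boundary off the localization sequence --- but at two crucial junctures your plan diverges from the paper in ways that make it much harder to carry out.

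First, the construction of the degenerating family. You propose to build it by translating a theta divisor by the endomorphism $\sqrt{\Delta}$ and to control it by theta-function calculations. This is where the actual work lies, and the outline leaves it essentially unconstructed: a theta translate on the Hilbert modular surface does not by itself extend to a family over the whole Siegel threefold with the required degeneration behavior. The paper gets this family by working on the Kummer plane $\CP^2_A$: Birkenhake--Wilhelm's theorem produces, for the allowed $\Delta$, an exceptional rational curve $Q_\Delta$ in $\CP^2_A$ passing through certain of the fifteen nodal points $q^{ij}$ and meeting the six ramification lines at further points of even multiplicity, and then an enumerative-geometry input (existence and finiteness of rational curves of degree $d$ through $3d-1$ points, Kontsevich--Manin / Ruan--Tian) is used to deform $Q_\Delta$ to a family $Q_z$ over all of the Siegel threefold after dropping exactly one incidence condition. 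Pulling $Q_z$ through the double cover yields an irreducible rational curve $C_z$ ramified at precisely two points, which becomes reducible exactly when those two points coalesce, i.e.\ over $H^c_\Delta$. The element is then built from the nodal-rational-curve recipe of Proposition \ref{construction}. Your theta-translate approach would have to independently reprove something at least as strong as Birkenhake--Wilhelm and the deformation step; as stated it only asserts they can be done.

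Second, and more seriously, the indecomposability argument. You propose to compute the Beilinson regulator into real Deligne cohomology and deduce indecomposability from its non-vanishing, noting yourself that this reduces to non-vanishing of higher Green's function values extracted from the arithmetic of $\Delta$. That is the wrong direction for this paper: it would require exactly the kind of transcendence information that the whole construction is designed to \emph{produce} (algebraicity of Green's function values), and in practice non-vanishing of the regulator is not something one could verify without already knowing a great deal about these values. The paper's indecomposability proof is purely algebraic and local: it shows (Theorem \ref{boundary}, via the covering involution $\iota$ and an intersection-number argument) that the boundary under the localization map equals $a(\bar{C}^1_H-\bar{C}^2_H)$ with $a\neq 0$, where $\bar{C}^1_H,\bar{C}^2_H$ are the \emph{new} cycles that exist only in fibres over $H^c_\Delta$ and are therefore not the specializations of any codimension-one cycle on the generic fibre. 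Decomposable elements can only have boundaries supported on specializations of generic divisors, so non-triviality of this boundary forces indecomposability with no regulator computation at all. Adopting that boundary-based criterion, rather than the regulator computation, is the key idea your write-up is missing.
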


To construct the cycle we use generalizations of two classical results. To a principally polarised Abelian surface one can canonically associate a configuration of six lines tangent to a conic in $\CP^2$. These six lines will intersect in $15$ points. A classical result of Humbert's \cite{humb} gives a beautiful criteria for an Abelian surface to have multiplication by $\Q(\sqrt{5})$ in terms of this configuration in $\CP^2$. He showed that multiplication by $\Q({\sqrt 5})$ holds if and only if  there exists a second conic passing through $5$ of the $15$ points and is further tangent to the sixth line. Birkenhake and Wilhelm \cite{biwi} generalized this to a large class of real quadratic fields, the fields of disciminant $\Delta$ in the theorem, and showed that when an Abelian surface has multiplication by $\sqrt{\Delta}$ there is an exceptional {\em rational} curve in the corresponding $\CP^2$ passing through some of the $15$ points and meeting the remaining lines at points of even multiplicity. 

The second classical result we use is the fact that there exists a conic passing through any $5$ points in $\CP^2$ in general position. The generalization of this is that there always exists a rational curve of degree $d$ passing though $3d-1$ points. We use this theorem to show that their exists a curve in the generic fibre that deforms the exceptional curve and use that  to construct a  motivic cycle. The exact number of such curves is the celebrated theorem of Kontsevich-Manin \cite{koma} and Ruan-Tian \cite{ruti}. 

\subsection{Algebraicity of values of higher Green's functions.}

The second theme of this paper is algebraicity of values of Green's functions. Our motivic cycles  lie in the fibres of the complement of a divisor on the Siegel modular threefold. We can further restrict them to families over modular or Shimura curves to get cycles defined in the fibres of the complement of some points - which turn out to be CM points.

It turns out that the regulator of  motivic cycles in $H^3_{\M}(A_{\eta},\Q(2))$, when $A_{\eta}$ is the generic fibre of the  universal Abelian surface over a modular or Shimura curve, can be expressed in terms of certain higher Green's functions. These are  functions introduced by Gross and Zagier in the modular curve case. Precisely, a theorem of Zhang \cite{zhan} implies that  the matrix coefficients of the Green's current of CM cycles can be expressed in terms of  Green's functions of degree $2$. The boundary of our cycle is a linear combination of CM cycles and hence the regulator can be expressed a sum of the Green's currents of the cycles appearing in the boundary. More generally, the regulator of a cycle in $H^{2j+1}_{\M}(A^{j}_{\eta},\Q(j+1))$ can be expressed in terms of Green's functions of degree less than or equal to $j+1$. 

It is quite easy to see that the values of the degree $1$ Green's functions appearing in the regulator are logarithms of algebraic numbers when evaluated at $CM$ points.  It turns out that the existence of indecomposable cycles immediately implies that the values of the degree $2$ Green's functions appearing in the regulator have values at $CM$ points which are logarithms of algebraic numbers. This is because at a $CM$ points $\tau$  the value can be realised as the regulator of an element of the group $H^5_{\M}(A_{\tau},\Q(3))$ -- which can be seen to be the logarithm of an algebraic number. In general, the existence of indecomposable cycles in $H^{2j+1}_{\M}(A^{j}_{\eta},\Q(j+1))$ similarly implies that values of the higher Green' s function of degree $\leq j+1$ are logarithms of algebraic numbers. 

This results in the following theorem (Theorem \ref{algebraicity}):
\begin{thm} Let $X$ be a modular curve and $G_2^X(z_1,z_2)$ be the higher Green's function of Gross and Zagier. Let $Z_{\Delta}^c=\sum (C_P,f_P)+(E_P,g_P)$ be the cycle constructed as above in the generic Kummer $K3$ surface of the modular curve  with boundary on points $\tau$ on $H_{\Delta}^c \cap X$. Let $y$ be a $CM$ point lying outside the support of $H_{\Delta}^c \cap X$. Then 
			
			$$\langle \reg(Z^c_{\Delta}),\eta_y  \rangle=\sum a_{\tau} G_2^X(\tau,y)=\log \left( \prod_{x \in C_P|_{y} \cap S_y} |f_P(x)|^{\ord_{C_P|_y \cap S_y}(x)} \prod_{x \in E_P|_{y} \cap S_y} |g_P(x)|^{\ord_{C_P|_y \cap S_y}(x)} \right)$$
			where $\sum a_{\tau} S_{\tau}$, where $S_{\tau}$ is the $CM$ cycle at $\tau$, is the boundary of $Z_{\Delta}^c$ up to decomposable elements. 
	
	In particular, since the points of $C_P \cap S_y$ and $E_P \cap S_y$  are algebraic and the functions $f_P$ and $g_P$  are defined over ${\bar \Q}$, this is the logarithm of an algebraic number.
\end{thm}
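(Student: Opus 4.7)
The plan is to break the proof into three parts: first the identity relating the regulator pairing to a sum of values of $G_2^X$; second the identity relating that sum to the logarithm of an explicit product; and third the algebraicity conclusion. Throughout I would work with the restriction $Z_\Delta^c$ of the cycle $\Xi^c_\Delta$ from Theorem \ref{motiviccycle} to the one-parameter family over $X$, whose boundary is supported on the finite set $H_\Delta^c\cap X$ of CM points.

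For the first identity, the strategy is to compute $\reg(Z_\Delta^c)$ through its boundary $\partial Z_\Delta^c=\sum a_\tau S_\tau$, a linear combination of CM cycles on the universal abelian surface over $X$. By the theorem of Zhang invoked in the introduction, the Green's current of a CM cycle, paired against the harmonic representative $\eta_y$ attached to the fiber over $y$, is given by the value at $(\tau,y)$ of the Gross--Zagier higher Green's function, up to decomposable terms that vanish under this pairing. Summing with the multiplicities $a_\tau$ produces the first equality.

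For the second identity, I would use that $y$ lies in the complement of $H_\Delta^c\cap X$, so that $Z_\Delta^c=\sum(C_P,f_P)+(E_P,g_P)$ specializes to a well-defined higher Chow cycle in $H^3_\M(A_y,\Q(2))$. As indicated in the introduction, cup product with the CM cycle $S_y$ produces a class in $H^5_\M(A_y,\Q(3))$; on a cycle of the form $(C,f)$ this cup product is explicitly represented by the pair $(C\cap S_y,\,f|_{C\cap S_y})$, and Beilinson's regulator on that group is then $\sum_{x\in C\cap S_y}\ord_{C\cap S_y}(x)\log|f(x)|$. Applying this to each summand $(C_P,f_P)$ and $(E_P,g_P)$ yields the claimed product formula.

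The algebraicity assertion follows once one notes that $C_P$, $E_P$, $f_P$, $g_P$ and $S_y$ are all defined over $\bar\Q$: the curves and functions arise from the arithmetic Birkenhake--Wilhelm exceptional-curve configuration and its algebraic Kontsevich--Manin--Ruan--Tian deformation, while the CM cycle is defined over a number field by classical theory. The intersection points $x$ and the values $f_P(x)$, $g_P(x)$ are therefore algebraic, so the right-hand side is the logarithm of an algebraic number.

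The principal obstacle I expect will lie in the first step: pinning down Zhang's formula in the exact form needed -- relating the Green's current of a CM cycle on the universal abelian surface over $X$ to $G_2^X$ with the correct normalizations -- and verifying that decomposable contributions, which are always present in a motivic cycle of this type, really do pair trivially with $\eta_y$. A secondary technical point will be ensuring that the identification of the cup product $[(C,f)]\cup[S_y]$ with the tame-symbol regulator remains valid when the intersection $C\cap S_y$ acquires higher multiplicity, so that the multiplicities $\ord_{C_P|_y\cap S_y}(x)$ on the two sides of the equality genuinely match.
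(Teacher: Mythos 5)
Your proposal follows essentially the same route as the paper's proof: restrict the cycle to the family over $X$, use the localization boundary structure together with Zhang's description of the Green's current of a CM cycle paired against $\eta_y$ to obtain the sum of $G_2^X$-values, then specialize to the fiber over $y$ and compute the regulator of the cup product with $S_y$ inside $H^5_\M(A_y,\Q(3))$ as a product of $\log|f_i|$ over the intersection points. The one concern you flag but leave open — that the decomposable contributions vanish when paired with $\eta_y$ — is precisely the content the paper dispatches with Lemma \ref{decomp}: it modifies $\Xi_\Delta^c$ by explicit decomposable cycles $(D_\eta^i,g^i)$ so that the new boundary is exactly $\sum a_\tau S_\tau$, then observes that the modification has zero pairing with $\eta_y$ because $\eta_y$ (as the class of $S_y$) is orthogonal to the generic N\'eron--Severi classes; so your plan is correct and is carried out in the paper in just that way.
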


Gross, Kohnen and Zagier \cite{GKZ} conjectured that values at $CM$ points of certain higher Green's functions are logarithms of algebraic numbers. The Green's functions they considered were linear combinations of Green's functions of Heegner cycles obtained by relations among coefficients of modular forms of some weight $k$. 

They  also conjectured, and proved in the case of Heegner divisors on curves, that the Heeger cycles give coefficients of modular forms. Hence a `relation among coeficients of modular forms' suggests that there should be a `relation of rational equivalence' among Heegner cycles.

A relation among coefficients of modular forms of weight $k$ can be thought of as a weakly holomorphic modular form of weight $2-k$ and a `relation of rational equivalence among Heegner cycles of codimension $j$' is a motivic cycle in the group $H^{2j+1}_{\M}(A^j,\Q(j+1))$.

This suggests that there should be a link between weakly holomorphic modular forms and motivic cycles: A weakly holomorphic modular form should correspond to a special motivic cycle. This was speculated on in \cite{sree2000}. Borcherds \cite{borc} essentially does this in the case of Heegner divisors on modular and Shimura curves - he constructs functions with divisors on Heegner points from weakly holomorphic modular forms of weight $\frac{1}{2}$. 

In this paper we make a precise conjecture (Conjecture \ref{conj1}) using recent work of Brunier-Ehlen-Yang \cite{BEY} and earlier work of Zhang \cite{zhan} and Mellit \cite{mell}.
\begin{conj}

If $f$ is a weakly holomorphic modular form  for a group $\Gamma$ of weight $\frac{1}{2}-j$ there must exist a motivic cycle  $\Xi_f$ in $H^{2j+1}_{\M}(A^j,\Q(j+1))$, where $A$ is the universal family over $X(\Gamma)$ such that 
$$\langle \reg(\Xi_f),\w_{\tau}\rangle=\Phi_j(f,\tau)$$
where  $\Phi_j(f,\tau)$ is a Borcherds lift studied by Bruinier-Ehlen-Yang \cite{BEY} and $\w_{\tau}$ is a particular $(j,j)$ form in $A^j_{\tau}$.

\end{conj}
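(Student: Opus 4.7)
Since the final statement is a conjecture, any proof must begin by deciding how to functorially assign a motivic cycle to the modular form $f$. The plan is in two stages: first produce $\Xi_f$ from $f$ by a generalised Borcherds-type construction, and then verify the regulator identity via Zhang's description of Green's currents of CM cycles together with the explicit formula for $\Phi_j(f,\tau)$ obtained in \cite{BEY}.

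Stage 1 (construction of $\Xi_f$). The principal part of $f$ at the cusps records a collection of integer multiplicities $c_f(\Delta)$ indexed by discriminants $\Delta$. For $j=0$, Borcherds' theorem \cite{borc} produces a meromorphic modular function on $X(\Gamma)$ whose divisor is $\sum c_f(\Delta)\, Z_{\Delta}$, thereby exhibiting the prescribed Heegner combination as rationally trivial. For $j\geq 1$ the analogue should be a higher Chow cycle $\Xi_f \in CH^{j+1}(A^j,1)=H^{2j+1}_{\M}(A^j,\Q(j+1))$ whose tame boundary equals $\sum c_f(\Delta)\, S_{\Delta}$, where $S_{\Delta}$ is the codimension-$(j{+}1)$ Heegner (CM) cycle. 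The fact that $f$ is weakly holomorphic of weight $\frac{1}{2}-j$ should be exactly the Serre-duality condition ensuring that such a relation among the $S_{\Delta}$ can hold. For $j=1$, restricted to a modular curve, Theorem \ref{motiviccycle} of this paper already realises a prototype: the Birkenhake--Wilhelm pencil \cite{biwi} provides, for $\Delta$ in the allowed family, a curve-function pair $(C_P,f_P)+(E_P,g_P)$ whose tame boundary is the CM cycle on $H_{\Delta}^c\cap X$. For $j\geq 2$, the analogous input would be rational subvarieties of $A^j$ meeting Heegner subschemes with prescribed multiplicity, guided by the enumerative framework of Kontsevich--Manin \cite{koma}.

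Stage 2 (regulator identity). By Zhang's formula \cite{zhan}, the matrix coefficient of the Green current of a CM cycle $S_{\Delta}$ against $\w_\tau$ is, up to decomposable terms, a higher Green's function of degree $j+1$ evaluated at $(\Delta,\tau)$. Pairing $\reg(\Xi_f)$ with $\w_\tau$ and integrating by parts against the boundary of $\Xi_f$, as was done in the one-dimensional case of Theorem \ref{algebraicity}, then yields
\[
\langle \reg(\Xi_f),\w_\tau\rangle=\sum_{\Delta} c_f(\Delta)\, G_{j+1}^{X(\Gamma)}(\Delta,\tau).
\]
Bruinier--Ehlen--Yang \cite{BEY} express $\Phi_j(f,\tau)$ as exactly this linear combination of higher Green's functions with coefficients $c_f(\Delta)$. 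A careful bookkeeping of normalisations, together with Mellit's analysis \cite{mell} used to identify the decomposable contribution at each $\tau$, then completes the matching.

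The principal obstacle is Stage 1 for $j\geq 2$: producing, for every weakly holomorphic $f$, a higher Chow cycle whose tame boundary is the prescribed Heegner combination is essentially a Beilinson--Bloch type statement for CM cycles, and at present seems out of reach beyond the special discriminants handled in Theorem \ref{motiviccycle}. A subsidiary but non-trivial difficulty in Stage 2 is verifying that $\Xi_f$ is genuinely indecomposable, so that the pairing is not polluted by spurious logarithms of algebraic numbers absent from $\Phi_j(f,\tau)$, and pinning down the overall normalisation constant relating motivic regulators to Borcherds lifts in the Bruinier--Ehlen--Yang conventions.
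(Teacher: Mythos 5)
This statement is a conjecture; the paper offers no proof, only heuristic motivation (the localization sequence, the $j=0$ Borcherds theorem, and the $j=1$ cycles $Z_{\Delta_1}$) as evidence, and you have correctly recognised it as such. Your two-stage strategy --- construct $\Xi_f$ with prescribed Heegner boundary, then compute the regulator via Zhang's Green-current formula --- is essentially the framework the paper itself uses to motivate the conjecture, and the obstacles you flag (the general construction for $j\geq 2$, the normalisation constants, indecomposability) are exactly the ones the paper leaves open.
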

The case when $j=0$  is the theorem of Borcherds \cite{borc}. Our cycles correspond to the case $j=1$. The conjecture suggests that there should be special cycles in $H^3_{\M}(A,\Q(2))$ coming from weakly holomorphic modular forms of weight $-\frac{1}{2}$. We provide some evidence of this. 

Several cases of the Algebraicity Conjecture are known, sometimes conditionally, thanks to the works of Zhang \cite{zhan}, Mellit \cite{mell}, Viazovska \cite{viaz}, Zhou \cite{zhou} and finally Bruinier-Ehlen-Yang \cite{BEY}. The works of \cite{BEY} and Zemel \cite{zeme} imply certain relations which can be used to prove that Heegner cycles give coefficients of modular forms but they are  not quite relations of {\em rational equivalence}. An advantage of having a relation of rational equivalence is that we not only get the logarithm of the algebraic number but the number itself. Mellit showed this is some special cases as well. 

In Section \ref{computation}, we work out an example of the algebraicity result. We work out the intersection of the motivic cycle corresponding to $\Delta=5$ and a cycle in the fibre over the Humbert surface of invariant $4$. This gives a number which is algebraic over the function field of a component of $H_4$ and wll given an honest algebraic number when further restricted to a point lying on the interesection of a modular curve with the component of $H_4$. 

\subsubsection{The `modular' complex.}

Most constructions of exceptional cycles in the case of modular or Shimura varieties seem to come from `Shimura' sub-objects -- essentially cycles obtained from Shimura subvarieties,  their Hecke translates and the boundary components.  Ramakrishnan \cite{rama} formalised this in   the notion of the `modular complex' - a subcomplex of the Chow complex of a Shimura variety generated by these objects and the relations among them. While Ramakrishanan defined it only for Shimura varieties it can be extended to the universal families over them. Our cycles then lie in that complex. 

Our conjecture above can be formulated as a link between two short exact sequences - one coming from the modular complex and the other coming from the theory of mock modular forms or, equivalently, harmonic weak Maass forms. In our case the localization sequence gives a link between the motivic cohomology group of the generic fibre and different motivic cohomology groups of the special fibre and the total space. 
$$\cdots \longrightarrow H^{2j+1}_{\M}(A^j_{\eta},\Q(j+1)) \longrightarrow \bigoplus_{x \in X} H^{2j}_{\M}(A_x^j,\Q(j)) \longrightarrow H^{2j+2}_{\M} (W,\Q(j+1)) \rightarrow \cdots$$
Restricted to the modular subcomplex this can be seen to give a sequence involving the $CM$ cycles in the middle and indecomposable cycles as the first term. 

On the other hand, there is a well known sequence in the theory of weak Maass forms linking weakly holomorphic modular forms, harmonic weak Maass forms  and holomorphic modular forms,
$$ 0 \longrightarrow M^!_{\frac{1}{2}-j,\rho_L}(\Gamma') \longrightarrow H_{\frac{1}{2}-j,\rho_L}(\Gamma') \stackrel{\xi_{\frac{1}{2}-j}}{\longrightarrow} S_{\frac{3}{2}+j,\rho_L}(\Gamma') \longrightarrow 0.$$
Our conjecture asserts that the two sequences are related via the Borcherds lifts and regulator maps -- namely that the regulator of a motivic cycle evaluated on a $(j,j)$-form is the Borcherds lift of a weakly holomorphic modular form.  

One reason to formulate it this way is that here only the localization sequence involving $K_1$ and $K_0$ are involved - however there are sequence for higher $K$-groups as well. One might speculate there is some relation with modular forms there as well. 

\subsection{Other applications and remarks}
	
The construction of the motivic cycle   extends to the arithmetic case as well. If $A$ is an Abelian surface over a number field, we can construct cycles in $H^3_{\M}(A,\Q(2))$. These cycles can be used to prove that this group is not finitely generated. 

The Beilinson conjectures assert that for an Abelian suface over $\Q$ the rank of the integral motivic cohomology group  $H^3_{\M}(A,\Q(2))_{\ZZ}$ should be $3-b_1$ where $b_1$ is the rank of the group of codimension one cycles modulo homological equivalence. For instance in the case of the product of two non-isogenous elliptic curves over $\Q$, $b_1=2$ and he showed that there is $1$ motivic cycle using modularity of elliptic curves. The cycles we construct are {\em not} integral, but our construction suggests that there may be more than one cycle with the same boundary and perhaps considering their difference may give non-trivial integral cycles.

The actual construction is done on the Kummer $K3$ surface $\TK_A$ associated to the Abelian surface $A$. As mentioned above this can be realised as the double cover of $\CP^2$ ramified at six lines tangent to a conic.   One expects that much of the construction can be extended to the case of $K3$s which are double covers $\CP^2$ ramified at a sextic. 

\subsection{Outline of paper.} 

The outline of the paper is as follows. We first define the motivic cohomology groups that are of interest to us. Then we describe the construction of the cyles the case of products of elliptic curves followed by the general case. We then discuss the algebriacity results and the connection with the conjecture of Gross-Kohnen-Zagier \cite{GKZ} and the recent work of Bruinier-Ehlen-Yang \cite{BEY}. Finally we describe the modular complex and some other related questions. Along the way we work out an example of the construction of the motivic cycle and apply it to prove algebraicity in a special case.  

\vspace{\baselineskip}

\section *{Acknowledgements.} I would like to thank Srinath Baba, Souvik Bera, Jishnu Biswas, Spencer Bloch, Patrick Brosnan, Jan Bruinier, Sudeepan Datta, Najmuddin Fakhruddin, Volker Genz, Bernhard Heim, Satoshi Kondo, Anton Mellit, Arvind Nair, Kapil Paranjape, Tanay Phatak, G.V. Ravindra,  Subham Sarkar, Chad Schoen, Ronnie Sebastian,  B Sury and  Maryna Viazovska, among many others, who have listened to me talk about this question for years and have provided useful suggestions and comments.

\section{Elements of the Motivic Cohomology.}

\subsection{The group $H^3_{\M}(A,\Q(2))$.}

Let $X$ be a surface over a field $K$. An element of the group $H^3_{\M}(X,\Q(2))$ has the following presentation 
$$\sum (C_i,f_i)$$
where $C_i$ are curves on $X$ and $f_i$ functions on $C_i$ satisfying the cocycle condition 
$$\sum_i \div(f_i)=0$$ 

One way of constructing such elements is by considering any curve $C$ on $X$ with a constant function $a \in K^*$. Then $\div(a)=0$ and hence $(C,a)$ is such an element. There is a product structure on motivic cohomology. If $L/K$ is a finite extension,  one has a map 
$$ \bigoplus_{L/K} H^2_{\M}(X_L,\Q(1)) \otimes H^1_{\M}(X_L,\Q(1)) \longrightarrow H^3_{\M}(X_L,\Q(2)) \stackrel{Nm^L_K}{\longrightarrow} H^3_{\M}(X,\Q(2))$$
and the elements $(C,a)$ lie in the image of this map. The image of this map is called the group of  {\em decomposable} cycles and will be denoted by $H^3_{\M}(X,\Q(2))_{dec}$. The quotient group 
$$H^3_{\M}(X,\Q(2))_{ind}=H^3_{M}(S,\Q(2))/H^3_{\M}(S,\Q(2))_{dec}$$
is called the group of {\em indecomposable} cycles. In general it is not clear how to construct non-trivial elements of this group. 

There are a few constructions of indecomposable elements.  Bloch \cite{bloc} constructed them  on $X_0(37) \times X_0(37)$ and  Beilinson \cite{beil} generalised this to  products of modular curves. Collino constructed cycles in the Jacobian of a genus $2$ curve. There is a general construction inspired by Bloch's in the case when $X$ is a product of curves which have a set of points such that any divisor of degree $0$ supported on them is torsion in the Jacobian and all the constructions above are special cases of this. 

Another way of constructing cycles is the following - which is the method we will use later so we label it a proposition. 
\begin{prop}  Let $Q$ be a {\em nodal rational curve} on $X$ with node $P$. Let $\nu:\tilde{Q}\rightarrow Q$ be its  normalization in the blow up $\tilde{X}$ of $X$ at $P$  The strict transform $\tilde{Q}$ meets the exceptional fibre $E_P$ at two points $P_1$ and $P_2$. Both $\tilde{Q}$ and $E_P$ are rational curves. Let $f_P$ be the function with $\div(F_P)=P_1-P_2$ on $\tilde{Q}$ and similarly let $g_P$ be the function with $\div(g_P)=P_2-P_1$ on $E_P$. Then 
	$$(\tilde{Q},f_P)+(E_P,g_P)$$
	is an element of $H^3_{\M}(\tilde{X},\Q(2))$. 
	\label{construction}
	\end{prop}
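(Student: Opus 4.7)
The plan is to unpack the definition of $H^3_{\M}(\tilde{X},\Q(2))$ recalled at the start of this section and check the cocycle condition directly for the proposed element. An element of this group is a formal sum $\sum(C_i,f_i)$ with $C_i$ curves on $\tilde{X}$ and $f_i$ nonzero rational functions on $C_i$, subject to $\sum_i \div(f_i)=0$ as a zero-cycle on $\tilde{X}$. Since only two pairs appear here, the verification reduces to three tasks: (i) exhibit the rational functions $f_P$ and $g_P$, (ii) identify their divisors inside $\tilde{X}$, and (iii) check cancellation.

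First I would pin down the geometry. Since $P$ is an ordinary double point of $Q$, the normalization $\nu:\tilde{Q}\rightarrow Q$ separates the two analytic branches of $Q$ meeting at $P$, giving two distinct preimages. In the blow-up $\tilde{X}\rightarrow X$ at $P$, the exceptional divisor $E_P\cong\CP^1$ parametrizes tangent directions at $P$ in $X$, and because the two branches at a node have distinct tangent directions, the strict transform $\tilde{Q}$ meets $E_P$ transversely in exactly two distinct points, which we label $P_1,P_2$. This is the one step where the nodality hypothesis on $Q$ enters essentially.

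Second, I would produce $f_P$ and $g_P$. The curve $\tilde{Q}$ is the normalization of a rational nodal curve, hence of geometric genus zero, and $E_P$ is the exceptional $\CP^1$; both are isomorphic to $\CP^1$, on which every degree-zero divisor is principal. Hence there exist rational functions $f_P$ on $\tilde{Q}$ and $g_P$ on $E_P$, each unique up to a nonzero scalar, with $\div(f_P)=P_1-P_2$ on $\tilde{Q}$ and $\div(g_P)=P_2-P_1$ on $E_P$.

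Finally, I would check the cocycle condition by pushing $\div(f_P)$ and $\div(g_P)$ forward to the group of zero-cycles on $\tilde{X}$. Both are supported on $\tilde{Q}\cap E_P=\{P_1,P_2\}$, viewed as scheme-theoretic points of $\tilde{X}$, and the sum is $(P_1-P_2)+(P_2-P_1)=0$, which is exactly the relation defining $H^3_{\M}(\tilde{X},\Q(2))$. The only delicate point in the whole argument is the distinctness of $P_1$ and $P_2$ on $E_P$, and this is guaranteed by the assumption that $Q$ has an ordinary node at $P$ rather than a more degenerate singularity; everything else is a direct computation in the Gersten-style presentation of motivic cohomology.
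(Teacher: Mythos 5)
Your argument is correct and is exactly the direct verification the paper leaves implicit (the proposition is stated as a construction with no separate proof). You correctly use nodality to get two distinct preimages $P_1\neq P_2$ on the exceptional fibre, observe that $\tilde{Q}$ and $E_P$ are smooth rational curves so the prescribed degree-zero divisors are principal, and then check that the pushforwards to zero-cycles on $\tilde{X}$ cancel; nothing more is needed in the Gersten presentation recalled at the start of the section.
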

This was used by Lewis and Chen \cite{lech} to prove the Hodge ${\mathcal D}$-conjecture for certain $K3$ surfaces  and by me \cite{sree2014} to prove an analogue of that in the non-Archimedean case for Abelian surfaces.

\subsection{The long exact localization sequence.} 

If $\XX$ is a surface over a base $S$ and $X_{\eta}$ is the generic fibre  then one has a {\em long exact localization sequence} linking the motivic cohomologies of $\XX$, $X_{\eta}$ and the special fibres $X_x$,
$$\cdots \longrightarrow H^3_{\M}({\XX},\Q(2)) \longrightarrow H^3_{\M}(X_{\eta},\Q(2)) \stackrel{\partial}{\longrightarrow} \bigoplus_{x \in S^1} H^2_{\M}(X_x,\Q(1)) \longrightarrow H^4_{\M}(\XX,\Q(2)) \longrightarrow \cdots $$
where $S^1$ are the cycles of codimension $1$ in $S$. 
The boundary map is the following. If $\sum (C_i,f_i)$ is a cycle in $H^3_{\M}(X_{\eta},\Q(2))$, let $\CC_i$ be the closure of $C_i$ in $\XX$. Then 
$$\partial (\sum_i (C_i,f_i))=\sum_i \div_{{\mathcal C}_i}(f_i).$$
The cocycle condition implies that $\overline {\div_{C_i}(f_i)}=0$ so the boundary is supported on `vertical' cycles in the fibres $X_x$. 

In the case of a decomposable element this boundary is easy to compute. If $(C,a)$ is a decomposable cycle with $a \in K(S)^*$ and $C$ a curve in the generic fibre  then 
$$\partial ((C,a))=\sum_x \ord_x(a) C_x$$
where $C_x$ is the restriction of  the closure of $C$ to $X_x$. This shows that the only cycles appearing in the boundary of of decomposable cycles are the  restrictions of the closures of codimension one cycles on $X_{\eta}$ to the fibre $X_x$. There are instances when the group $CH^1(X_x)=H^2_{\M}(X_x,\Q(1))$ is larger than $CH^1(X_{\eta})=H^2_{\M}(X_{\eta},\Q(1))$ and if one obtains these `extra' cycles in the boundary of a motivic cycle then that cycle is necessarily indecomposable. 

The group $H^4_{\M}(\XX,\Q(2))$ is $CH^2(\XX)$, hence the motivic cohomlogy group $H^3_{\M}(X_{\eta},\Q(2))$ is the `space of relations of rational equivalence between codimension $2$ cycles supported in the fibres'. More generally, the group $H^{2p-1}_{\M}(X_{\eta},\Q(p))$, where $X_{\eta}$ is of dimension $2p-2$,  is the space of relations of rational equivalence  between codimension $p$ cycles supported in fibres. For instance, when $p=1$ this is just the space of relations between points -- namely functions on $X$. 

\section{The Siegel Modular threefold and Humbert surfaces.}

We are interested in the case when the base variety is the Siegel modular threefold $S_2(2)$ parameterizing Abelian surfaces with full level $2$ structure. In this case for a general Abelian surface $A$, the rank $\rho$ of the N\'{e}ron-Severi group $NS(A)$  is 1. There are special subvarieties of $S_2(2)$ corresponding to points where  $\rho \geq 2$ called {\em Humbert surfaces}. Equivalently, these are  the images of Hilbert modular surfaces in the Siegel modular threefold. We will construct cycles in the generic fibre of the universal family  which have boundaries supported on components of  these subvarieties and, further, the boundary consist of the extra cycles determined by the subvarieties.  From the remarks above these cycles are necessarily indecomposable.

\subsection{Humbert surfaces.} Let $(A,\Theta)$ be a principally polarised Abelian surface. If $D \in NS(A)$ following \cite{kani} define the Humbert norm to be
$$H(D)=(D,D)_H=(D.\Theta)^2-2D^2.$$
From the Hodge Index Theorem one knows that the intersection pairing is negative definite on the orthogonal complement of the class of the polarization. The Humbert pairing is simply the negative of the intersection pairing on that and is therefore positive definite. The Humbert surface $H_{\Delta}$ is the set
$$H_{\Delta}=\{z \in S_2(2)| \exists D \in NS(A_z) \text{ such that } H(D)=\Delta\}$$
 If $A_z$ is an Abelian surface then \cite{bess},
 $$A_z \text{ has real multiplication by } \ZZ[\frac{\Delta+\sqrt{\Delta}}{2}] \Leftrightarrow z \text{ lies on } H_{\Delta}$$

 The surfaces $H_{\Delta}$ are the images of Hilbert modular surfaces in $S_2(2)$. 
 For instance, an Abelian surface is a product of elliptic curves if its moduli point lies on $H_1$. Here $\Delta=1$ and the endomorphism ring is the degenerate case of $\ZZ \oplus \ZZ$. More generally $H_{n^2}$ corresponds to the moduli of Abelian surfaces $A=J(C)$ where there is map $p:C \longrightarrow E$ of degree $n$ from the polarisation $C$, which is a genus $2$ curve, to an elliptic curve $E$ \cite{kani}.

 In \cite{coll} Collino constructed an indecomposable cycle in the motivic cohomology of the generic Abelian surface which has a boundary on a component $H_1$. In general, on a moduli of Abelian surfaces one expects there to be motivic cycles whose boundary is supported on special submoduli, where the Picard rank increases.  In this paper we construct examples of such cycles in the cases of some families of $\Delta$. 
 
 \section{Motivic cycles on  Abelian surfaces.}

\subsection{The case of products of elliptic curves.}

As a first case, we consider products of elliptic curves. Let $X$ be the moduli of products of elliptic curves with full level $2$ structure as well as suficient level structure to ensure there is a universal family.  Let $S$ be $X \times X$. Let $A_{\eta}$ be the generic fibre of the universal family over $S$. Over a point $(\tau_1,\tau_2)$ the fibre is $A_{\tau_1,\tau_2}=E_{\tau_1} \times E_{\tau_2}$. We construct cycles in $H^3_{\M}(A_{\eta},\Q(2))$ such that their boundary is supported on those points $(\tau_1,\tau_2)$ where  $E_{\tau_1}$ and $E_{\tau_2}$ are isogenous. This happens on  modular curves lying on $X \times X$. Further, the boundary consists of the graph of the isogeny. This construction is due to Spiess \cite{spie}  in the local case so we will largely follow him.

We use the following theorem of Frey and Kani \cite{frka}. This theorem asserts that under some conditions a product of elliptic curves is isogenous to the Jacobian of a stable genus $2$ curve.

\begin{thm} Let $E_1$ and $E_2$ be two elliptic curves over a noetherian scheme $S$. Let $n$ be an odd integer which is invertible on $S$ and $\phi:E_1[n] \rightarrow E_2[n]$ an anti-isometry with respect to the Weil pairing $e_n$ - i.e $e_n(\phi(x),\phi(y))=e_n(x,y)^{-1}$. Let $H=Graph(\phi)$ and $J=(E_1 \times E_2)/H$ with $p:A \rightarrow J$ the projection. Then there exists a proper flat morphism $q:C \rightarrow S$ and a closed immersion $j:C \rightarrow J$ such that 
	
	\begin{itemize}
		\item $C$ is a stable genus $2$ curve in the sense of Deligne-Mumford. 
		\item $j(C)$ is fixed under the involution $-1:J \rightarrow J$: $-j(C)=j(C)$.
		\item The line bundle associated to $j(C)$ is a principal polarization and the composite map 
		$$A \stackrel{p}{\longrightarrow} J \stackrel{\lambda_{j(C)}}\longrightarrow \hat{J} \stackrel{\hat{p}}{\longrightarrow} A$$
		is given by multiplication by $n$ where $\lambda_{j(C)}$ is the isomorphism $J\longrightarrow \hat{J}$ induced by the principal polarization. 
		\item The composite maps
		$$\pi_i:C \stackrel{j}{\longrightarrow} J \longrightarrow \hat{J} \stackrel{\hat{p}}{\longrightarrow} E_1 \times E_2 \stackrel{p_i}{\longrightarrow} E_i$$
		are finte maps of degreen $n$ for $i \in \{1,2\}$.   
	\end{itemize} 
	
	\label{freykani}
	
\end{thm}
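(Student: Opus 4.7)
The plan is to produce $J$ as a principally polarised abelian surface by descending the product polarization of $A = E_1 \times E_2$ through the \'{e}tale quotient $p: A \to J$, and then identify $C$ with the resulting theta divisor.

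First I would verify that $H = \text{Graph}(\phi)$ is a Lagrangian subgroup of $A[n]$. The Weil pairing on $A[n]$ splits as a product $e_n^A((x_1,x_2),(y_1,y_2)) = e_n(x_1,y_1)\cdot e_n(x_2,y_2)$, so the anti-isometry hypothesis gives $e_n^A((x,\phi(x)),(y,\phi(y))) = 1$; since $|H| = n^2 = \sqrt{|A[n]|}$ the subgroup $H$ is maximal isotropic, and is \'{e}tale over $S$ because $n$ is invertible on $S$.

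Next, the line bundle $\Theta_A^{\otimes n}$ has kernel $A[n]$ with commutator pairing $e_n^A$, so by Mumford's descent theory it descends to a line bundle $\mathcal{L}$ on $J$ satisfying $p^*\mathcal{L} \cong \Theta_A^{\otimes n}$; maximality of $H$ makes $\lambda_{\mathcal{L}}$ a principal polarization, and oddness of $n$ ensures the symmetric structure descends uniquely. The composite identity $\hat p \circ \lambda_{\mathcal{L}} \circ p = \lambda_{p^*\mathcal{L}} = n\cdot \lambda_{\Theta_A} = [n]$ then follows by unwinding the descent. I would produce $C$ as the relative theta divisor of $(J,\mathcal{L})$: since $\chi(\mathcal{L})=1$, the pushforward of $\mathcal{L}$ to $S$ is a line bundle, yielding a single effective divisor $j(C) \subset J$. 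Fibrewise, by Weil's classification, every principally polarised abelian surface is either the Jacobian of a smooth genus $2$ curve (with theta divisor its canonical image) or a product of elliptic curves with the product polarization (with theta divisor the nodal union of two elliptic curves meeting at the origin); both are Deligne--Mumford stable genus $2$ curves. The symmetry $-j(C)=j(C)$ reflects the symmetric descent of $\mathcal{L}$, and $\deg(\pi_i)=n$ follows by a direct intersection computation using $p^*C = n\Theta_A$ together with the composite identity above.

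The main obstacle will be relativising the entire construction over $S$ --- in particular, showing $q: C \to S$ is proper and flat across the locus where $J$ degenerates into a product (so $C$ passes from a smooth genus $2$ curve to a nodal union of two elliptic curves) and verifying Deligne--Mumford stability there. This can be handled by showing the theta divisor is a relative effective Cartier divisor via cohomology and base change, together with the fibrewise classification above.
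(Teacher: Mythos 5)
The paper does not actually prove this theorem: the \texttt{proof} environment contains only the citations to \cite{frka} and \cite{spie}, and the statement is used as a black box. Your Mumford-descent argument is, in outline, precisely the proof those references give, and it is correct. The key points you have right are: $H$ is Lagrangian in $A[n]$ exactly because $\phi$ is an anti-isometry; the symmetric line bundle $\Theta_A^{\otimes n}$ therefore descends along $p$ to $\mathcal L$ on $J$ with $\chi(\mathcal L)=n^2/|H|=1$, giving a principal polarization; the relative theta divisor supplies $C$, with fibrewise stability coming from the dichotomy Jacobian / product-of-elliptic-curves for principally polarised abelian surfaces; and $\hat p\circ\lambda_{\mathcal L}\circ p=\lambda_{p^*\mathcal L}=n\lambda_{\Theta_A}$. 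Two small points worth making explicit in a full write-up: the identification $\hat A\cong A$ must be taken to be $\lambda_{\Theta_A}$ (coming from the product polarization on $E_1\times E_2$), so that $n\lambda_{\Theta_A}$ literally becomes $[n]$; and for $\deg\pi_i=n$ the cleanest route is the one you gesture at — pulling back along $p$ one finds $\psi_i\circ p=[n]\circ p_i$ on $A$, then $\bigl(p^*C\bigr)\cdot\bigl(\text{fibre of }[n]\circ p_i\bigr)=n\Theta_A\cdot(n^2\text{ copies of }\{x\}\times E_2)=n^3$, and dividing by $\deg p=n^2$ gives $n$. Your flagged worry about relativising over $S$ is the genuine technical content of \cite{frka}, and the cohomology-and-base-change strategy you propose is the right one.
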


\begin{proof} \cite{frka}, \cite{spie} \end{proof}

To find such an anti-isometry we do the following. Let $E_x$ and $E_y$ be two elliptic curves such that there is an isogeny $h:E_x \rightarrow E_y$ of degree $(n-1)$. Let $\phi_h$ denote the induced map on $n$ torsion. Then $\phi_h$ induces an anti-isometry $E_x[n] \rightarrow E_y[n]$ as
$$e_n(\phi_h(a),\phi_h(b))=e_n(a,\phi_{\hat{h} \circ h}(b))=e_n(a,(n-1)b)=e_n(a,-b)=e_n(a,b)^{-1}.$$
for all $a,b \in E_x[n]$. While $h$ need not extend to all $E_x$ and $E_y$ the anti-isometry $\phi_h$ can be lifted to an isometry $\tilde{\phi}_h:E_{\eta_1}[n] \longrightarrow E_{\eta_2}[n]$. 

From  Theorem \ref{freykani}, if $H$ is the graph of $\phi_h$ then $A_{\eta}/H$ is the Jacobian of a stable genus $2$ curve $C_{\eta}$.  Spiess \cite{spie}, Lemma 3.3, shows  this curve is generically irreducible. 

However, at those points $s=(x,y)$ in whose fibres where there is an isogeny of $h$ of degree $(n-1)$ and  $H$ is given by the graph of the isogeny, the genus $2$ curve splits in to the sum of two elliptic curves meeting at a point:
$$C_s=\Gamma^t_{-\hat{h}} \sqcup_0 \Gamma_{h} \simeq E_x \sqcup_0 E_y$$
and the maps $\pi_x:C_s \longrightarrow E_x$ is given by $\pi_x=id \sqcup_0 -\hat{h}$ and $\pi_y:C_s \longrightarrow E_y$ is given by $\pi_y=h \sqcup_0 id$.

The set of such points $s$ is given by  a component of a modular curve on $X \times X$ and for every such isogeny we can get a corresponding genus smooth irreducible genus $2$ curve in the generic fibre. Let $X(h)$ denote that component. In the fibre over $X(h)$ one has the cycles $\Gamma_h$ and $\Gamma_{-hat{h}}$. 

Following Spiess, we use this to construct a motivic cycle. We first remark since we are assuming full level two structure  the Wieirstrass points are defined over the function field. There are $6$ points on $C$ and in the special fibre, three lie on one component and three lie on the other. The fourth Wieirstrass point on the each elliptic curve is given by the point of intersection. Let $P$ be a point restricting to one component, say $E_x$ and $Q$ a point restricting to $E_y$. There exists a function $f$ on $C$ with 
$$\div(f)=2P-2Q.$$ 
Let $P_i$ and $Q_i$ denote the images of $P$ and $Q$ under $\pi_i$. There exists a function $f_i$ on $E_i$ with 
$$\div(f_i)=2P_i-2Q_i$$
The element 
$$Z_h=(C,f)+(E_1 \times P_2,f_1^{-1})+(Q_1 \times E_2,f_2^{-1})$$ 
is an element of the motivic cohomology group $H^3_{\M}(A_{\eta},\Q(2))$. We next compute its boundary.

\begin{thm}[\cite{spie}] The boundary of the element $Z_h$ is, up to a decomposable element $m \Gamma_h$ in the fibre over $X(h)$ for some $m \neq 0$.  
\end{thm}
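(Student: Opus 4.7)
The plan is to compute $\partial Z_h$ at the generic point of $X(h) \subset S$ by splitting into its three summands and isolating the non-decomposable contribution, which comes from $(C,f)$.

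First, the two ``horizontal'' terms $(E_1 \times P_2, f_1^{-1})$ and $(Q_1 \times E_2, f_2^{-1})$ extend globally over $\mathcal A \to S$: both curves are pullbacks of sections from a single factor of $X \times X$, and both $f_1, f_2$ are pullbacks of functions on the universal elliptic curves. For such global extensions any contribution to the boundary at $X(h)$ is a multiple of the restriction to $A_x$ of a codimension-one cycle already defined on $A_\eta$, hence lies in $H^3_{\M}(A_\eta,\Q(2))_{dec}$; a different normalization by $\lambda \in k(S)^*$ alters the boundary only by that of the decomposable cycle $(\,\cdot\,,\lambda)$, which is again decomposable. Therefore, modulo decomposable elements, $\partial Z_h \equiv \partial(C,f)$ at $X(h)$.

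The core computation is $\partial(C,f)|_{X(h)}$. By Theorem \ref{freykani} and Spiess's irreducibility lemma, the closure $\overline C \to S$ has generically smooth genus-$2$ fibres and degenerates over $s \in X(h)$ to the nodal curve $C_s = \Gamma^t_{-\hat h} \cup_0 \Gamma_h$ sitting inside $A_s = E_x \times E_y$. Let $\mathcal D_1, \mathcal D_2$ be the two irreducible vertical components of the preimage of $X(h)$ in $\overline C$, restricting fibrewise to $\Gamma^t_{-\hat h}$ and $\Gamma_h$. Since $P$ and $Q$ specialize to $P_s \in \Gamma^t_{-\hat h}$ and $Q_s \in \Gamma_h$ and the sections $\overline P, \overline Q$ miss the node locus, each restriction of $f$ must satisfy $\div(f|_{\Gamma^t_{-\hat h}}) = 2P_s - 2N$ (pole at the node) and $\div(f|_{\Gamma_h}) = 2N - 2Q_s$ (zero at the node), these being the only ways to obtain a degree-zero divisor on each elliptic component. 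A local analysis of $f$ at the node section in the local model $xy = t^m$ (with $m$ the ramification index along $X(h)$, handled by base change to a semistable model) then produces
$$\partial(C,f)\big|_{X(h)} \;=\; a\,[\Gamma^t_{-\hat h}] + b\,[\Gamma_h],$$
with $a$ and $b$ equal in magnitude and opposite in sign (both proportional to $\pm m$), reflecting the pole/zero asymmetry at the node.

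Finally, since $[\Gamma^t_{-\hat h}] + [\Gamma_h] = [\overline C|_s]$ is the restriction of the globally defined class $[C] \in NS(A_\eta)$, it is decomposable. Subtracting a suitable decomposable multiple one obtains
$$\partial Z_h \;\equiv\; m\,[\Gamma_h] \pmod{H^3_{\M}(A_\eta,\Q(2))_{dec}}, \qquad m := b - a \neq 0.$$
The main obstacle is the local computation at the node section $\mathcal D_1 \cap \mathcal D_2 \subset \overline C$: one must correctly track the orders of $f$ along both vertical components in a possibly non-semistable local model and account for the ramification introduced by any base change. The crucial nonvanishing $m \neq 0$ ultimately rests on the asymmetric placement of the Weierstrass points $P, Q$ on distinct components of the degenerate fibre.
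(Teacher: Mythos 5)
The paper itself does not prove this theorem — it cites Spiess — so there is no in-paper proof to compare against directly; however, the paper does give a closely analogous argument for Theorem \ref{boundary}, and that comparison is illuminating.

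Your overall architecture is correct and matches what one expects: split off the two ``horizontal'' terms $(E_1\times P_2, f_1^{-1})$ and $(Q_1\times E_2, f_2^{-1})$ as contributing only decomposable boundary (since $E_1\times P_2$ and $Q_1\times E_2$ are globally-defined classes over the whole of $X\times X$ and $P_1,Q_1$ do not collide generically along $X(h)$), then reduce to computing $\partial(C,f)$ via the degeneration $C_s = \Gamma^t_{-\hat h}\cup_0 \Gamma_h$, and finally absorb $[\Gamma^t_{-\hat h}]+[\Gamma_h]$ into decomposables because it is the restriction of the globally-defined polarization class. This is all sound.

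The genuine gap is exactly the step you flag yourself: the determination of the vertical multiplicities $a,b$ along $\mathcal D_1,\mathcal D_2$. You assert that ``a local analysis of $f$ at the node in the local model $xy=t^m$'' yields $a=-b\neq 0$, but you never run that analysis, and it is the entire content of the theorem. Moreover, $a=-b$ is not intrinsic: multiplying $f$ by $\lambda\in k(S)^*$ with $\operatorname{ord}_{X(h)}\lambda=c$ shifts both $a$ and $b$ by $c$, so only the difference $a-b$ is determined. The cleaner way to pin it down, and the route the paper takes in the proof of its analogous Theorem \ref{boundary}, dispenses with the local coordinate model entirely: since $f$ restricted to the vertical component $\mathcal D_1$ is a rational function on a complete curve, its divisor has degree $0$, i.e.\ $\bigl(\mathcal H + a\mathcal D_1 + b\mathcal D_2\bigr)\cdot \mathcal D_1 = 0$. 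On a semistable model over the DVR $\mathcal O_{S,X(h)}$ one has $\mathcal H\cdot\mathcal D_1 = 2$ (from $2\overline P$), $\mathcal D_1\cdot\mathcal D_2 = 1$ (one node), and $\mathcal D_1^2 = -1$ (since $\mathcal D_1+\mathcal D_2$ is a fibre of self-intersection $0$), giving $2 - a + b = 0$, i.e.\ $a - b = 2$. The intersection with $\mathcal D_2$ gives the same relation (consistency), and then $\partial Z_h \equiv (b-a)\,\Gamma_h = -2\,\Gamma_h \pmod{\mathrm{dec}}$, with $m = -2\neq 0$. This replaces your unexecuted $xy=t^m$ analysis with one line of intersection theory and removes the spurious need for the normalization giving $a=-b$. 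You should also check that the model over the localization at $X(h)$ is actually semistable (or pass to a base change and correct the multiplicities); this is what makes the ``handled by base change'' aside nontrivial and is where the factor $2$ in $a-b$ could in principle change by a ramification index, though the nonvanishing survives.
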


To determine the boundary more precisely, if the motivic cycle has a boundary, it implies that there is an isogeny $h$  of degree $(n-1)$ from $E_1$ to $E_2$. In this case $\Theta=E_1 \times 0  + 0 \times E_2$. If $\Gamma_h$ denotes the graph of the isogeny then
$$H(\Gamma_h)=(\Gamma_h.\Theta)^2 -2(\Gamma_h,\Gamma_h)$$
$(\Gamma_h,\Theta)=(\Gamma_h,E_1 \times 0)+(\Gamma_h, 0 \times E_2)=(n-1)+1=n$. $(\Gamma_h,\Gamma_h)=0$ from the genus formula as $\Gamma_h$ is an elliptic curve. Hence 
$$H(\Gamma_h)=n^2.$$

The moduli of products of elliptic curves is a double cover of  the Humbert surface $H_1$ under the involution given by $(x,y) \rightarrow (y,x)$. Hence the cycle we have degerates on a curve $C$ on $S$ which maps to a component of $H_1 \cap H_{n^2}$. This intersection is a union of graphs of Hecke corresopondences of the form $T_{\frac{n^2-s^2}{4}}$.

\subsection{The general case.}

On the Humbert surface $H_1$, it is clear what the new cycles in the fibres are -- if $A=E_1 \times E_2$ then the cycles $E_1 \times 0$ and $0 \times E_2$, both of which have Humbert norm $1$. As shown above when they are isogenous the graph of the isogeny gives a cycle of Humbert norm $n^2$ where $n$ is the degree of the isogeny. In general, on $H_{\Delta}$, it is not clear how to describe the new cycles in the fibres.  

Humbert, and more recently Birkenhake-Wilhelm \cite{biwi}, gave a description of these cycles for several classes of $\Delta$. In order to understand their description, we need the following.

\subsubsection{The Kummer construction.}

Let $(A,\LL)$ be a principally polarised  Abelian surface, where $\LL=\LL_{\Theta}$ is the line bundle corresponding to an irreducible genus $2$ curve $\Theta$ on $A$. The {\em Kummer surface} $K_A$ is the image of $A$ in $\CP^3$ under the map 
$$\phi=\phi_{\LL^2}: A \longrightarrow \CP^3=\CP(H^0(A,\LL^2))$$
It is well known that $K_A$ isomorphic to $A/\pm 1$ and has $16$ nodes corresponding to the $16$ 2-torsion points on $A$. The blow up of $K_A$ at those $16$ points is a $K3$ surface $\tilde{K}_A$ which we will call the {\em Kummer K3 surface}. Let $\nu: \TK_A \longrightarrow K_A$ be the blow up.  
 
One has another map 
$$\pi:K_A \longrightarrow \CP^2$$
obtained by projection from the image of $0$. This is a double cover ramified at $6$ lines tangent to a conic. We call the configuration of $\CP^2$ with the six lines $l^i$  the {\em Kummer plane}, $\CP^2_A=(\CP^2,l^1,\dots,l^6)$. These lines meet at $15$ points $q^{ij}=l^i \cap l^j$ corresponding to the images of the $15$ non-zero $2$-torsion points. By abuse of language we will refer to these points as "two-torsion" points as well. 

One has a diagram 

$$\begin{CD} 
 	@. \TK_A   \\
@. @VV\nu V \\
A @>\phi>> K_A @>\pi>> \CP^2_A
\end{CD}$$

Conversely, given a $\CP^2$ with $6$ lines tangent  to a conic one can recover the Abelian surface. Hence the moduli of principally polarised Abelian surfaces is the same as the moduli of six lines in $\CP^2$ tangent to a conic. The idea of Humbert and Birkenhake-Wilhelm is to describe the extra cycle in terms of this data. 

\begin{comment}
\subsubsection{Products of Elliptic Curves}

The construction above does not quite work when the polarization is reducible. However, one has an alternative. 

If $E$ is an elliptic curve,  its Kummer curve $K_E$ is the image of $E$ curve under the map to $\CP^2$ induced by the square of the principal polarization.  Equivalently, this is $E/\iota_E$ where $\iota_E:E \rightarrow E$ is the involution $x \rightarrow -x$. This is a $\CP^1$ with $4$ distinguished points $q^i$, namely the image of the two torsion, and $E$ is the double cover of this ramified at the $4$ points.  

For the product of elliptic curves $E_1 \times E_2$ one can consider $K_{E_1} \times K_{E_2}$ which is a $\CP^1 \times \CP^1$ with $8$ distiguished lines $q^1_{E_1} \times K_{E_2}$ and $K_{E_1} \times q^i_{E_2}$.

However the map from $E_1 \times E_2 \rightarrow K_{E_1} \times K_{E_2}$ is a $4$-fold cover. So the more suitable thing is to define the Kummer surface $K_{E_1\times E_2}$  of a product of elliptic curves to be the double cover of this ramified at the $8$ lines. This is a quotient of $E_1 \times E_2$ by $(\iota_{E_1} \times \iota_{E_2})$ and the map from $E_1 \times E_2  \rightarrow K_{E_1 \times E_2}$ is a double cover. 
	
	\end{comment}

\subsubsection{The Theorem of Birkenhake-Wilhelm.}

We first recall the theorem of Birkenhake-Wilhelm \cite{biwi}:

\begin{thm}[Birkenhake-Wilhelm] Let $(A,\LL_{\Theta})$ be a principally polarized Abelian surface with a line bundle $\LL_{\Delta}$ of invariant $\Delta$. Let $S=\prod_{i=1}^6 l^i$ be the (degenerate) sextic given by the product of ramified lines in the Kummer plane $\CP^2_A$. Let $m$  be a natural number and $k \in \{4,6,8,10,12\}$. Then there exists a rational curve $Q_{\Delta}$ on $\CP^2_A$ which passes through some of the points $q^{ij}$ with no singularities at those points and meets $S$ at the remaining points with even multiplicity. One has the following cases:
	\vspace{\baselineskip}
	\begin{center}
		\begin{tabular}{||l|c|c|c||}
			\hline
			Case  & $\Delta$ & $ d=deg(Q_{\Delta})$ & No. of points $q^{ij}$ \\
			\hline
			$I.$ & $8m^2 + 9 - 2k$ & $2m$ & $k-1$\\
			$II.$ & $8m(m+1)+9-2k$ & $2m+1$ & $k$\\ 
			$III.$ & $8m^2+8-2k$ & $2m$ & $k$ \\
			$IV.$ &$8m(m+1)+12-2k$ & $2m+1$ & $k-1$\\
			$V.$ & $m^2$ &$m-1$ & $3$\\
			\hline
		\end{tabular}
	\end{center}
	\vspace{\baselineskip}
	Further, $Q_{\Delta} = \pi \circ  \phi(D_{\Delta} )$ where $D_{\Delta}$ is a curve on $A$ which lies in the linear system of divisors of a line bundle $\LL$ of the form $\LL_{\Theta}^a \otimes  \LL_{\Delta}^b$  with $b \neq 0$. In particular, the class of $D_{\Delta}$ is not a multiple of the class of the principal polarization.

\end{thm}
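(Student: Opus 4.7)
The plan is to realize the desired curve $Q_\Delta$ as the image under $\pi \circ \phi$ of an appropriately chosen $[-1]_A$-invariant divisor $D_\Delta$ on $A$, and to translate the combinatorial statement of the theorem into parity conditions on the intersections of $D_\Delta$ with the $16$ two-torsion points of $A$ and with the ramification locus of $\pi \circ \phi$.  The key general fact is that $\pi \circ \phi \colon A \to \CP^2_A$ is a $[-1]_A$-equivariant degree-$4$ covering with ramification along the preimage of the six-line configuration $S$; consequently every $[-1]$-invariant effective divisor on $A$ descends to a curve on $K_A$ and thence on $\CP^2_A$, while conversely a rational curve in $\CP^2_A$ meeting $S$ with even multiplicity away from the $q^{ij}$ pulls back to a curve on the Kummer cover.

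For each row of the table I would fix integers $(a,b)$ with $b \neq 0$ and work with the line bundle $\LL_{a,b} := \LL_\Theta^a \otimes \LL_\Delta^b$ on $A$.  The Humbert identity $(\LL_\Delta \cdot \LL_\Theta)^2 - 2 \LL_\Delta^2 = \Delta$ determines the intersection numbers $\LL_{a,b} \cdot \LL_\Theta$ and $\LL_{a,b}^2$ in closed form in $(a,b,\Delta)$, up to the usual ambiguity of shifting $\LL_\Delta$ by a multiple of $\LL_\Theta$, and Riemann-Roch on the abelian surface gives $h^0(\LL_{a,b}) = \tfrac{1}{2}\LL_{a,b}^2$ whenever $\LL_{a,b}$ is ample.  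This leaves enough freedom in $|\LL_{a,b}|$ to impose simple base points at any prescribed subset of the two-torsion.  The degree $d$ of $Q_\Delta$ is then read off from $\LL_{a,b} \cdot \LL_\Theta$ (corrected by the multiplicity of $D_\Delta$ at the origin), while the number of $q^{ij}$ through which $Q_\Delta$ passes equals the number of two-torsion base points imposed on $D_\Delta$; matching these to the table is a direct calculation of parities.

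I would then exhibit an irreducible $[-1]$-invariant member $D_\Delta$ of $|\LL_{a,b}|$ passing through exactly the prescribed two-torsion points, and combine the adjunction formula for $D_\Delta$ on $A$ with Riemann--Hurwitz for the degree-$2$ map $D_\Delta \to Q_\Delta$ to force $Q_\Delta$ to have geometric genus zero.  Each intersection of $D_\Delta$ with the ramification of $\pi \circ \phi$ that is not an imposed base point is automatically a fixed point of $[-1]_A$, and hence maps to a point of $S \setminus \{q^{ij}\}$ with even local multiplicity as required.  The parity bookkeeping produces exactly the four families I--IV of the table; Case V, for which $\Delta = m^2$, is the degenerate situation where the real-multiplication field is $\Q$ and $\LL_\Delta$ is represented by an elliptic subvariety $E \subset A$ with $E \cdot \LL_\Theta = m$, in which case $D_\Delta$ may be taken to be $E$ itself and the three $q^{ij}$ it contains are the images of its three non-trivial $2$-torsion points.

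The main obstacle is the existence step: for each case one must exhibit an irreducible $D_\Delta$ in the prescribed linear system with the correct base-point pattern, and rule out reducible members of the form $\Theta + (\text{effective residue})$, which would produce a $Q_\Delta$ containing a tangent line $l^i$ as a component.  The hypothesis $b \neq 0$ enters precisely here, preventing $D_\Delta$ from being a pure multiple of the principal polarization and forcing its class to detect the real-multiplication direction in the N\'eron--Severi lattice; combined with an intersection-theoretic count of effective decompositions of $\LL_{a,b}$ constrained by $b \neq 0$, this excludes the pathological reducible cases and completes the verification row by row.
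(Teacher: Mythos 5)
The paper does not prove this result at all: it is quoted verbatim as the theorem of Birkenhake--Wilhelm, with the sole attribution being the citation \cite{biwi}, and the surrounding text ("We first recall the theorem of Birkenhake--Wilhelm") makes clear it is being imported as a black box. So there is no internal proof for your attempt to be compared against, and reconstructing one from scratch is well beyond what the paper itself undertakes.

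That said, your sketch does follow the broad shape of what Birkenhake and Wilhelm actually do -- work with $[-1]$-symmetric divisors in $|\LL_{\Theta}^{a}\otimes\LL_{\Delta}^{b}|$, descend through $A\to K_A\to \CP^{2}_{A}$, track two-torsion, and read off $d$ from $\LL_{a,b}\cdot\LL_{\Theta}$ corrected by multiplicity at $0$ -- so as an outline it is not unreasonable. The places where it is materially underdeveloped are exactly the places that carry the weight of the theorem: (i) you never actually force $Q_{\Delta}$ to have geometric genus zero; the "adjunction plus Riemann--Hurwitz" step hides a delicate count, since $p_a(D_{\Delta})$ grows quadratically in $(a,b)$ and one needs the ramification (two-torsion points plus tangencies to $S$) to exactly cancel it, which is a nontrivial diophantine and geometric constraint, not a bookkeeping formality; (ii) the "no singularities at the $q^{ij}$" condition is not addressed -- imposing a simple base point of $D_{\Delta}$ at a two-torsion point $e$ does not by itself control the local behaviour of $Q_{\Delta}$ at $q^{ij}$ once one projects from a node; (iii) the claim that any intersection of $D_{\Delta}$ with the ramification of $\pi\circ\phi$ that is not an imposed two-torsion base point "automatically" has even image-multiplicity conflates the two different ramification mechanisms in the degree-$4$ cover (the $\phi$-branching at the sixteen nodes and the $\pi$-branching along the six lines); only the latter produces the tangency pattern along $S$, and only for $[-1]$-symmetric $D_{\Delta}$ with the right parity of local intersection, so this needs an argument rather than an "automatically"; and (iv) the existence/irreducibility step you flag as the "main obstacle" is indeed where most of the work in \cite{biwi} lives and cannot be dispatched by the hypothesis $b\neq 0$ alone. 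In short: plausible strategy, but not a proof, and in any case not something this paper proves.
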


Note that the theorem says that for an Abelian surface $A$ corresponding to a  point $z_0$ on $H_{\Delta}$ there is a $Q_{\Delta}$ in the corresponding $\CP^2_A$. The corresponding curve on the Abelian surface  $D_{\Delta}$ will be defined only in the universal family over the component on which $z_0$ lies. There will be a different curve defined over the other components. A different choice of points $q^{ij}$ and points of even multiplicity will correspond to a curve defined over a different component of $H_{\Delta}$.

To obtain the curve $D_{\Delta}$ we make the following observation. 

\begin{lem} If $\pi:C \longrightarrow Q$ is a double cover of a rational curve $Q$ for which all ramification points are singular. Then the normalization $\tilde{C}$ is the union of two rational curves meeting at a point. 
	
\end{lem}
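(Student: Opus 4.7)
The plan is to pass to the normalization of $C$ and exploit the simple-connectedness of $\CP^1$.

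First, since $Q$ is rational, its normalization is $\CP^1$, and replacing $C$ by $C\times_Q\CP^1$ does not alter the normalization of $C$. So I may assume $Q=\CP^1$. Let $\nu:\tilde{C}\to C$ be the normalization of $C$ and put $\tilde{\pi}=\pi\circ\nu:\tilde{C}\to Q$, a finite morphism of degree two from a smooth curve.

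The key step is to show that $\tilde{\pi}$ is unramified. Suppose for contradiction that $\tilde{r}\in\tilde{C}$ is a ramification point of $\tilde{\pi}$. Since $\tilde{C}$ is smooth and $\tilde{\pi}$ has degree two, the scheme-theoretic fiber $\tilde{\pi}^{-1}(\tilde{\pi}(\tilde{r}))$ equals $\{\tilde{r}\}$ with length two. Setting $r=\nu(\tilde{r})\in C$, the inclusion $\nu^{-1}(r)\subseteq\tilde{\pi}^{-1}(\tilde{\pi}(\tilde{r}))$ forces $\nu^{-1}(r)=\{\tilde{r}\}$, i.e.\ a single geometric point above $r$. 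However the hypothesis says $r$ is singular on $C$, and in the context of the paper -- $C$ arises as the preimage of a plane curve in the Kummer double cover $K_A\to\CP^2$ branched along the reduced sextic $l^1+\cdots+l^6$ -- this singularity is necessarily of two-branch type: locally the defining equation is $y^2=u^{2m}\cdot(\text{unit})$ at a point where $Q$ meets some $l^i$ with even multiplicity $2m$, or $y^2=u\cdot v$ at a point where $Q$ passes through a node $q^{ij}$ of $K_A$, and each factors into two smooth branches after extracting a square root of the unit. Hence $\nu^{-1}(r)$ must contain at least two distinct points, contradicting $\nu^{-1}(r)=\{\tilde{r}\}$.

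Consequently $\tilde{\pi}:\tilde{C}\to\CP^1$ is étale of degree two, and since $\pi_1(\CP^1)=1$ in characteristic zero it must be the trivial cover. So $\tilde{C}\cong\CP^1\sqcup\CP^1$ is a disjoint union of two rational curves, identified in $C$ at the singular (nodal) points; these are the two rational curves of the statement, meeting at the singular point inside $C$.

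\textbf{Main obstacle.} The abstract hypothesis ``every ramification point is singular'' is in fact too weak on its own: the cuspidal double cover $y^2=x^3$ of $\CP^1$ satisfies it but has irreducible normalization. The proof therefore rests genuinely on the local form of the Kummer double cover, which is branched along a \emph{reduced} divisor and so can only produce two-branch singularities in $C$. Establishing this local factorization is the heart of the argument; once it is in hand, the reduction to an étale cover of $\CP^1$ and the triviality conclusion are formal.
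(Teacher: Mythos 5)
Your proof follows the same route as the paper's---pass to the normalization, show the induced map $\tilde\pi\colon\tilde{C}\to\CP^1$ is unramified, and invoke simple connectedness of $\CP^1$---but you have located and patched a real gap that the paper elides. The hypothesis ``every ramification point of $\pi$ is singular'' does \emph{not} by itself make $\tilde\pi$ unramified, and your cuspidal instinct is correct; a fully watertight counterexample to the abstract statement is a double cover with affine equation $y^2=(x(x-1))^3$, whose only ramification points are the two cusps at $x=0,1$ (the cover is unramified over $x=\infty$), yet whose normalization is an irreducible $\CP^1$ by Riemann--Hurwitz. (Your $y^2=x^3$, taken projectively, acquires a \emph{smooth} ramification point over $x=\infty$ and so just escapes the hypothesis; the point stands with the modified example.) The paper's proof simply asserts unramifiedness from the hypothesis, so the additional input you supply---that because the Kummer cover $K_A\to\CP^2_A$ is branched along the \emph{reduced} sextic $\prod l^i$, every singular ramification point of $C$ has exactly two analytic branches ($y^2=u^{2m}\cdot\text{unit}$ at a point of even tangency, or $y^2=uv$ over a node $q^{ij}$), and is therefore separated by normalization---is exactly what the argument tacitly needs and really ought to appear in the lemma's hypotheses.

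The one step you treat too briefly is the final clause ``meeting at a point.'' The paper pins this down with the arithmetic-genus identity $0=g(\tilde{C}_1)+g(\tilde{C}_2)+(\tilde{C}_1\cdot\tilde{C}_2)-1$ to force a single intersection point, whereas you simply assert the conclusion. Note also that the honest normalization $\tilde{C}\cong\CP^1\sqcup\CP^1$ has \emph{disjoint} components, so ``meeting at a point'' can only refer to the images in $C$ (or to the strict transform in $\TK_A$, where the nodes over the $q^{ij}$ are separated by the blow-up while the remaining two-branch point is not); this imprecision is shared by the lemma statement itself, and it is worth at least a sentence in your proof to say at which stage of the resolution the ``one meeting point'' is being measured, and why exactly one survives.
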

\begin{proof} The map $\pi$ induces a map $\tilde{\pi}:\tilde{C} \longrightarrow \CP^1$. Since all the ramification points of $\pi$ are singular, the map $\tilde{\pi}$ is an {\em unramified} double cover of $\CP^1$. There are no irreducible unramified double covers of $\CP^1$ hence this is a union of two curves $\TC_1$ and $\TC_2$ meeting at some points. We also have 
	$$0=g(\TC)=g(\TC_1)+g(\TC_2)+(\TC_1,\TC_2)-1$$
	therefore $(\TC_1,\TC_2)=1$ and the two components meet at a point. 
\end{proof}

We apply this to the rational curve $Q_{\Delta}$. The map $\pi:K_A \rightarrow \CP^2_A$ induces a double cover from the preimage $C$ of $Q_{\Delta}$ to $Q_{\Delta}$. From the lemma one can see that the curve $C$ will have two components $C_1$ and $C_2$. Either one of them will pull back to $A$ to give the extra cycle $D_{\Delta}$. The two curves $C_1$ and $C_2$ are conjugate in the sense that the involution induced by $\pi$ corresponds to Galois conjugation in $Q(\sqrt{\Delta})$.

\subsubsection{The construction of cycles in the group $H^3_{\M}(A,\Q(2))$.}

The Theorem of Birkenhake-Wilhelm above says that if $\Delta$ is as in the theorem and  $A$ has multiplication by $\Q(\sqrt{\Delta})$ there is an exceptional rational curve $Q_{\Delta}$ of  degree $d$  meeting the sextic  $S$  only at points of even multiplicity or nodes of the sextic.

The idea here to show there exists a rational curve satisfying slighly weaker conditions for {\em all} Abelian varieties which restricts to $Q_{\Delta}$ when the moduli point of $A$ lies on the component of $H_{\Delta}$ corresponding to $Q_{\Delta}$. 

There is a well known theorem in enumerative geometry which states the following \cite{zing}:

\begin{thm} 
	\label{enumgeom}
	Let $d>0$ be a natural number. Let $n_d$ be the number of rational curves of degree $d$ passing though $3d-1$ points in general position. Then $n_d$ is a finite, non-zero number. 
\end{thm}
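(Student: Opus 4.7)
The strategy is the standard dimension count combined with the existence argument via Kontsevich's recursion. The natural parameter space is the Kontsevich moduli space $\overline{M}_{0,3d-1}(\mathbb{P}^2,d)$ of stable maps from genus-zero curves with $3d-1$ marked points to $\mathbb{P}^2$ of degree $d$. This space is a proper Deligne--Mumford stack of (virtual, but here actual) dimension $3d-1 + (3d-1) = 6d-2$, and one considers the product of evaluation maps
$$\mathrm{ev} = (\mathrm{ev}_1,\ldots,\mathrm{ev}_{3d-1}) : \overline{M}_{0,3d-1}(\mathbb{P}^2,d) \longrightarrow (\mathbb{P}^2)^{3d-1}.$$
Since $\dim(\mathbb{P}^2)^{3d-1} = 6d-2$ as well, for $n_d$ to count the fibre of $\mathrm{ev}$ over a generic $(3d-1)$-tuple of points makes sense. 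The main components of the proof are (a) finiteness of the generic fibre, and (b) non-vanishing of its cardinality.

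For finiteness, I would argue that $\mathrm{ev}$ is proper (since the source is proper) and that for generic input the fibre has the expected dimension zero. The key input is that the locus of stable maps whose image is a smooth rational curve is open, and on this locus a standard deformation calculation shows each point condition cuts dimension by exactly $2$. One then has to rule out contributions from multiple covers and reducible configurations; standard Bertini-type arguments on $\overline{M}_{0,3d-1}(\mathbb{P}^2,d)$ show these boundary strata have strictly smaller dimension after imposing the $3d-1$ generic point conditions, so they do not contribute to the generic fibre. Hence $n_d$ is well-defined and finite.

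For non-vanishing, the cleanest route is Kontsevich's recursion arising from the WDVV equation (associativity of quantum cohomology of $\mathbb{P}^2$). The base cases $n_1 = 1$ (the unique line through two points) and $n_2 = 1$ (the unique smooth conic through five general points) are classical. Applying the WDVV identity to the genus-zero three-point function on $\mathbb{P}^2$, one extracts the recursion
$$n_d = \sum_{\substack{d_1+d_2=d \\ d_1,d_2 \ge 1}} n_{d_1} n_{d_2}\left[ d_1^2 d_2^2 \binom{3d-4}{3d_1-2} - d_1^3 d_2 \binom{3d-4}{3d_1-1}\right],$$
which is manifestly a sum of integers and an induction on $d$ (combined with positivity of $n_1,n_2$ and careful bookkeeping of the combinatorial coefficients) shows $n_d \geq 1$ for all $d \geq 1$.

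The main obstacle is the second point — non-emptiness. The dimension count is routine, but showing that for \emph{every} $d$ there really is at least one rational curve of degree $d$ through $3d-1$ generic points is a genuine piece of enumerative geometry; a direct geometric construction (degenerating curves, smoothing nodal configurations) quickly becomes combinatorially unwieldy. This is precisely the reason Kontsevich's recursion, and its derivation from the associativity of quantum cohomology via the moduli space $\overline{M}_{0,4}(\mathbb{P}^2,d)$ and its two boundary divisors, is the standard tool: it converts the geometric existence statement into an inductive combinatorial identity, which is then easy. For the purposes of the present paper only the qualitative statement is needed, so I would simply cite \cite{koma}, \cite{ruti} (as the authors do later) for the recursion and note the positivity by induction.
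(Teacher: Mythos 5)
The paper itself gives no proof of this theorem: it simply cites \cite{zing} for the qualitative statement, and \cite{koma} and \cite{ruti} for the exact value of $n_d$. Your sketch is a faithful outline of the underlying Gromov--Witten argument that those references carry out — the Kontsevich space of stable maps, the evaluation morphism to $(\CP^2)^{3d-1}$, the dimension count $6d-2 = 6d-2$, and the WDVV/associativity recursion — so there is no real divergence from what the paper is invoking. The one place I would push back is the claim that positivity of $n_d$ falls out of the recursion essentially by inspection: the recursion you wrote has a subtracted term, so the $d$-th step is \emph{not} manifestly non-negative, and the ``careful bookkeeping'' you allude to is genuinely the crux. The cleaner standard route to non-emptiness is geometric: once transversality shows that for general points the count is an honest (hence non-negative) cardinality, one exhibits a single rational curve of degree $d$ through $3d-1$ general points by smoothing a reducible stable map — a line through $2$ of the points union a degree-$(d-1)$ rational curve through $3d-4$ of the points, meeting at a node — which gives $n_d \geq 1$ by induction on $d$. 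Since you end by citing \cite{koma}, \cite{ruti} for the recursion just as the paper does, this is a correct proof sketch consistent with the paper's (purely citational) treatment.
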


The precise number $n_d$ is the celebrated formula  of Kontsevich-Manin \cite{koma} and Ruan-Tian \cite{ruti}. For small $d$ it is classical: $n_1=1,n_2=1,n_3=12$.

More generally, let $k, m_1,\dots m_r$ be non-negative integers such that $k+\sum m_i=3d-1$. Then there are finitely many rational curves of degree $d$ passing through $k$ points and meeting $3d-1-k$ lines with  multiplicity $2m_i$, since that is a special case of the above theorem.  Let $n_{d; k, m_1, \dots, m_r}$ denote this number.  For instance there are two conics passing through $4$ points and tangent to a given line, so $n_{2; 4, 1}=2$ and one can see $n_{2; 3, 1, 1}=4$.

Armed with this we have the following theorem.

\begin{thm} 
	\label{motiviccycle} Let  $\Delta$ be as in the theorem of Birkenhake-Wilhelm. There exists a motivic cycle $\Xi^c_{\Delta}$ in $H^3_{\M}(A_{\eta},\Q(2))$ where $A_{\eta}$ is the generic abelian surface over the Siegel modular threefold. This cycle is defined in the fibres outside the component $H^c_{\Delta}$ of the Humbert surface $H_{\Delta}$ corresponding to $Q_{\Delta}$.
	
\end{thm}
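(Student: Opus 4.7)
The strategy is to generalize the Birkenhake-Wilhelm construction to the generic Abelian surface by means of Theorem \ref{enumgeom}, lift the resulting rational curve to the Kummer surface via $\pi$, and then apply Proposition \ref{construction}. First, each case (I)--(V) in the Birkenhake-Wilhelm table prescribes that $Q_\Delta$ be a rational curve of degree $d$ passing through a specified number of the fifteen two-torsion points $q^{ij}$ and meeting each remaining tangent line $l^i$ only at points of even multiplicity. A short Bezout computation, carried out case by case, shows that these incidence and tangency requirements impose exactly $3d-1$ conditions on a degree-$d$ rational curve in $\CP^2$, matching the dimension of the Kontsevich moduli space. Since for the generic Abelian surface $A_\eta$ the Kummer plane $\CP^2_{A_\eta}$ realises six lines tangent to a conic in general position, the generalization of Theorem \ref{enumgeom} allowing tangency constraints produces a finite non-empty collection of degree-$d$ rational curves $\tilde Q^c_\Delta$ in $\CP^2_{A_\eta}$ with exactly the same incidence profile as $Q_\Delta$.

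Next, I would fix such a $\tilde Q^c_\Delta$ and consider its preimage under $\pi : K_{A_\eta}\to \CP^2_{A_\eta}$. Because $\tilde Q^c_\Delta$ meets the branch divisor $S=\prod l^i$ only at the nodes $q^{ij}$ of $S$ or at smooth points of $S$ with even multiplicity, every ramification point of the restricted double cover $\pi^{-1}(\tilde Q^c_\Delta)\to \tilde Q^c_\Delta$ is a singular point. The lemma above then applies and yields $\pi^{-1}(\tilde Q^c_\Delta)=C_1\cup C_2\subset K_{A_\eta}$, the union of two rational curves meeting at a single point $P$, i.e.\ a nodal rational curve on the Kummer surface. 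Proposition \ref{construction} applied to this nodal curve produces an element $(\tilde C,f_P)+(E_P,g_P)\in H^3_\M(\TK_{A_\eta},\Q(2))$; pulling back along the \'etale double cover $A'_\eta\to\TK_{A_\eta}$, where $A'_\eta$ is the blow-up of $A_\eta$ at its sixteen two-torsion points, and pushing forward along the contraction $A'_\eta\to A_\eta$ transports this class to the desired $\Xi^c_\Delta\in H^3_\M(A_\eta,\Q(2))$.

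The entire construction is relative over the open locus of $S_2(2)$ on which the chosen Kontsevich curve $\tilde Q^c_\Delta$ deforms smoothly; this is precisely the complement of $H^c_\Delta$. Indeed, as the modulus approaches a point of $H^c_\Delta$, the curve $\tilde Q^c_\Delta$ specializes to the Birkenhake-Wilhelm curve $Q_\Delta$, whose preimage in $K_A$ contains the extra divisor class $D_\Delta$ whose appearance in the N\'eron-Severi group of the special fibre is exactly what is detected by the boundary of the localization sequence. The main obstacle is controlling this specialization: one needs to rule out the possibility that $\tilde Q^c_\Delta$ degenerates into a configuration of lower-degree reducible pieces rather than smoothly collapsing onto $Q_\Delta$, and to verify the $3d-1$ condition count uniformly across the cases (I)--(V).
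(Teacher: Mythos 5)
Your central claim -- that the Birkenhake-Wilhelm incidence conditions impose exactly $3d-1$ conditions on a degree-$d$ rational curve -- is wrong, and this is where the argument breaks. The correct count is $3d$: by Bezout $(Q_\Delta, S)=6d$ with every intersection point of multiplicity at least $2$, so in the generic case there are $3d$ distinct constraint points, giving $3d$ conditions on a $(3d-1)$-dimensional moduli space. (For $\Delta=5$: a conic through $5$ nodes {\em and} tangent to the sixth line is $5+1=6$ conditions on a $5$-dimensional space.) If the count were $3d-1$ as you claim, a curve with the full Birkenhake-Wilhelm incidence profile would exist over the generic point of $S_2(2)$ -- but then the generic abelian surface would carry a class of Humbert invariant $\Delta$, which is absurd: the existence of such a curve is precisely the codimension-one condition that defines $H_\Delta$. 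You also concede tension at the end (``to verify the $3d-1$ condition count uniformly''), but this is not a loose end, it is the obstruction: with your setup the cycle would exist on all of $S_2(2)$ and would never acquire a boundary, which makes it decomposable and defeats the purpose.

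The paper's proof uses the opposite mechanism to what you propose. It {\em drops} one of the point constraints $q^{i_0j_0}$, leaving exactly $3d-1$ conditions, so Theorem~\ref{enumgeom} now does yield a curve $Q_z$ over every $z$. But this $Q_z$ is no longer tangent at $q^{i_0j_0}$: it meets $l^{i_0}_z$ and $l^{j_0}_z$ transversally at two {\em smooth} points $s^{i_0}_z,s^{j_0}_z$ of the branch sextic. Consequently the double cover $C_z=\pi^{-1}(Q_z)\subset K_{A_z}$ has two smooth ramification points and is an {\em irreducible} rational curve -- not the nodal union $C_1\cup C_2$ you describe. The $(1,1)$-presentation is then built not from a node of $C_z$ but from one of the {\em retained} points $q^{ij}$, where $C_z$ passes through a node of $K_{A_z}$; blowing up there produces two points on the strict transform and on the exceptional curve, yielding the pair $(\BC_z,f_{p_z})+(E_{p_z},g_{p_z})$. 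The construction fails exactly when $s^{i_0}_z=s^{j_0}_z$ (forcing $Q_z=Q_\Delta$), which is the locus $H^c_\Delta$. So the missing idea in your proof is the deliberate relaxation of one constraint to move off the Humbert divisor; without it, the generic curve you need does not exist and the reducible double cover you invoke never arises.
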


\begin{proof}
	
Let $A_z$ be an Abelian surface depending on a parameter $z$ in the Siegel modular threefold. Corresponding to $A_z$ there are the lines $l^i_z$ and the points $q^{ij}_z$ in $\CP^2_{A_z}$. These vary smoothly with $z$. Let 
$$S_z=\prod_{i=1}^{6} l^i_z $$
be the sextic given by the product of the six lines. 

If $z_0$ lies on $H_{\Delta}^c$ the theorem of Birkenhake-Wilhelm says that there is a rational curve $Q_{\Delta}$ of degree $d$ meeting $S_{z_0}$ at some of the points $q^{ij}_{z_0}$ as well as some other points $t^k_{z_0}$ of even multiplicity $2m_k$. In general $(Q_{\Delta}, S_{z_0})=6d$ but since every point is of multiplicity at least $2$ there are at most $3d$ distinct points. 

Let $q^{i_0j_0}_{z_0}$ be one of the two torsion points lying on $Q_{\Delta}$. The completement of this in $Q_{\Delta} \cap S_{z_0}$ detemines $3d-1$ points. From Theorem \ref{enumgeom}, for any $z$ there exists a rational curve $Q_z$ of degree $d$ determined by the condition that it passes through the points  $q^{ij}_z$ for the $(ij) \neq (i_0j_0)$ and meets the lines $l^k$ at  points of $t^k_z$ with multiplicity $2m_k$. Further, this family varies smoothly in $z$  one has family of curves $Q_z$  which restricts to $Q_{\Delta}$ when $z=z_0$. 

Apart from the points common with $Q_{\Delta}$, $Q_z$ will meet $l^{i_0}_z$ at a point $s^{i_0}_z$ and $l^{j_0}_z$ at a point $s^{j_0}_z$, both with multiplicity $1$. Hence the double cover $\pi:C_z \longrightarrow Q_z$ induced from the map $\pi:K_{A_z} \longrightarrow \CP^2_{A_z}$ is ramified at the points $q_z^{ij}$ lying on $Q_z$, the points $t^k_z$, all of which are singular and finally the two points $s^{i_0}_z$ and $s^{j_0}_z$. The normalization, $\TC_z$ is therefore a double cover of $\CP^1$ ramified at $2$ points -- hence it is  an {\em irreducible} rational curve. 

A motivic cycle in $H^3_{\M}(\TK_{A_z},\Q(2))$ is given by a sum 
$$\sum (C_i,f_i)$$
such that $\sum \div(f_i)=0$. We construct such as cycle as follows. Let $\BC_z$ be the strict transform of $C_z$ in $\TK_{A_z}$. Let $q^{ij}_z$ be a point lying on $Q_z$. Such a point exists as from the theorem of Birkenhake-Wilhelm one can see that  $Q_{\Delta} \cap S_{z_0}$ has at least $3$ points of the form $q^{ij}_{z_0}$ and we have discarded only one to get the rational curve. Let $p_z$ be the point on $C_z$ lying over it. On $\TK_{A_z}$, which is obtained by blowing up all the points $q^{ij}_z$, there are two points $p_{1z}$ and $p_{2z}$ on  $\BC_z$ lying over $p_z$. The points lie on the interesection of the exceptional fibre $E_{p_z}$ with $\BC_z$. 

Let $f_{p_z}$ be the function on the rational curve $\BC_z$ with divisor  $\div(f_{p_z})=p_{1z}-p_{2z}$. To determine the function precisely we require  $f_{p_z}(s^{i_0}_z)=1$.  Similarly, let $g_{p_z}$ be a function on $E_{p_z}$ with $\div(g_{p_z})=p_{2z}-p_{1z}$. Then 
$$Z_{\Delta,z}^c=(\BC_z,f_{p_z})+(E_{p_z},g_{p_z})$$
is an element of $H^3_{\M}(\TK_{A_z},\Q(2))$. 

This construction works as long as $C_z$ is irreducible. This is the case as long as $s^{i_0}_z$ and $s^{j_0}_z$ do not coincide. When they coincide they agree with $q_{z}^{i_0j_0}$ and $Q_z=Q_{\Delta}$. This implies that $z$ lies on a component of $H_{\Delta}$.

Since $Z_{\Delta_z}^c$  is defined outside the complement of a codimensional  $1$ subvariety, it can therefore be thought of as an element of $H^3_{\M}(\TK_{A_{\eta}},\Q(2))$. Using the maps $\phi$ and $\nu$ let 
$$\Xi_{\Delta}^c=\phi^*(\nu_*(Z_{\Delta}^c))$$ 
This is a cycle in $H^3_{\M}(A_{\eta},\Q(2))$. 

\end{proof}

\begin{rem} We can also use a point of even multiplicity to `deform' the rational curve $Q_{\Delta}$. This gives cycles which are defined outside $H_{\Delta}^c$ as well as possibly other components as the $C_z$ can become reducible when $z$ lies on $H_{\Delta'}^c$ for some $\Delta' \leq \Delta$. For instance, if $Q_{\Delta}$ meets a line at a point of multiplicity $4$ then the deformed curve will meet at a point of multiplicity $2$ and two smooth points. There may be a locus where the two points meet but they do not coincide with the point of multiplcity $2$ and that could correspond to a different $\Delta$. As we show below the boundary of this cycle is supported on the locus there $C_z$ becomes reducible - so such cycles will have boundary on different Humbert surfaces. 	
	\end{rem}

\begin{rem} It is not clear what role is played by the the finite number $n_d$ or its generalization $n_{d;k,m_1,\dots,m_r}$. In the case of an Abelian surface over a number field it is  plausible that different deformations of the curve $Q_{\Delta}$ could be used to construct two elements with the same boundary - hence their difference is in the `integral' motivic cohomology. In that case, however, it is not clear if they are non-trivial. 
	
	\end{rem}

The next theorem shows that the boundary of the motivic cycle can be expressed in terms of the extra components determined by $Q_{\Delta}$.

\begin{thm} Let $Z^c_{\Delta}$ be the cycle constructed above and let 
	$$\bar{C}_z=\bar{C}_z^1 \cup \bar{C}_z^2$$
	when $z$ lies in the component $H_{\Delta}^c$ determined by the condition that $s^{i_0}_z=s^{j_0}_z=q^{i_0j_0}_{z}=q$.  
	
	Then the boundary  
	$$\partial(Z^c_{\Delta})=a(\bar{C}_z^1-\bar{C}_z^2)$$
	for some $a \neq 0$. In particular, $Z^c_{\Delta}$ is {\em indecomposable}.
	
	\label{boundary}
	
\end{thm}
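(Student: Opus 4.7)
The plan is to compute $\partial(Z^c_\Delta)$ directly from the localization sequence: take closures $\bar\CC$ and $\mathcal{E}_p$ of $\BC_z$ and $E_{p_z}$ in the total family of Kummer $K3$ surfaces over $S$, and identify the vertical divisors of $f_{p_z}$ and $g_{p_z}$ in the fiber over $z_0 \in H^c_\Delta$.

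First I analyze the degeneration. As $z\to z_0$ the points $s^{i_0}_z, s^{j_0}_z$ collide with $q^{i_0j_0}_z$, so all ramification points of the double cover $C_{z_0}\to Q_\Delta$ become singular; by the lemma preceding the theorem, $\BC_{z_0}$ splits as $\BC^1\cup\BC^2$, two rational curves meeting at a single point above $q^{i_0j_0}_{z_0}$ and interchanged by the Galois action of $\Q(\sqrt{\Delta})$. Near this node the total family $\bar\CC$ is modelled on the standard smoothing $xy=ut^m$, where $t$ is a local parameter of $H^c_\Delta$ at $z_0$, $x,y$ are local defining equations for $\BC^1,\BC^2$, $u$ is a unit and $m\in\ZZ_{>0}$ is the smoothing multiplicity.

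Next I compute $\div_{\bar\CC}(f_{p_z})$. Its horizontal part is the closure $\overline{p_{1z}}-\overline{p_{2z}}$, and by the choice of $f_{p_z}$ the two closures specialize to points on $\BC^1\cap E_{p_{z_0}}$ and $\BC^2\cap E_{p_{z_0}}$ respectively. Writing the vertical part as $m_1\BC^1+m_2\BC^2$, the requirement that the restriction of $f_{p_z}$ to each $\BC^i\cong\CP^1$ be a principal divisor of total degree zero forces $m_1=-m_2$, and a local computation using $xy=ut^m$ together with the normalization $f_{p_z}(s^{i_0}_z)=1$ pins down this common value as a nonzero rational number $a$. Since $\mathcal{E}_p\to S$ is smooth, $\div_{\mathcal{E}_p}(g_{p_z})$ has no vertical component, so the total boundary is $\partial(Z^c_\Delta)=a(\BC^1-\BC^2)$.

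For indecomposability, the discussion preceding the theorem shows that the boundary of any decomposable cycle lies in the image of the specialization map $\Pic(\TK_{A_\eta}) \to \Pic(\TK_{A_{z_0}})$. For generic $z$, $\Pic(\TK_{A_z})$ is generated by the pullback of $\Theta$ and the sixteen exceptional classes of the Kummer construction, whereas the theorem of Birkenhake-Wilhelm exhibits $[D_\Delta]$, equivalently a nonzero multiple of $[\BC^1]-[\BC^2]$, as an extra class that only appears on $H^c_\Delta$. Hence $a(\BC^1-\BC^2)$ is not in the image and $Z^c_\Delta$ is indecomposable. The main technical obstacle is the local computation around the node that guarantees $a\ne 0$; the normalization $f_{p_z}(s^{i_0}_z)=1$ is crucial here, as it is what breaks the $\Q(\sqrt{\Delta})$-symmetry exchanging $\BC^1$ and $\BC^2$ and prevents the two vertical contributions from cancelling.
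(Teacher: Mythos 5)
There is a genuine gap at the step where you claim the degree-zero requirement forces $m_1 = -m_2$ for the vertical part $m_1\BC^1_H + m_2\BC^2_H$ of $\div_{\overline{\BC}}(f_{p_\eta})$. Writing the horizontal part as $\mathcal{H}$ (the closure of $p_{1,\eta}-p_{2,\eta}$), one has $(\mathcal{H},\BC^1_H) = 1$, $(\mathcal{H},\BC^2_H) = -1$ and $(\BC^1_H,\BC^2_H) = 1$; applying the degree-zero argument to, say, $(f_{p_\eta}/t^{m_1})|_{\BC^1_H}$ (where $t$ is a uniformizer for $H^c_\Delta$) gives $1 + (m_2 - m_1) = 0$, and the same argument on $\BC^2_H$ gives $-1 + (m_1-m_2) = 0$ --- the \emph{same} single relation $m_1 - m_2 = 1$. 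This does show the vertical part is nonzero, but it is equally consistent with $(m_1,m_2)=(1,0)$, which would not be antisymmetric. Nothing in your setup --- the local model $xy = ut^m$ or otherwise --- supplies the second relation $m_1 + m_2 = 0$, and the ``local computation\ldots pins down this common value'' is asserted, not carried out.

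The missing ingredient in the paper's proof is the Galois involution $\iota$ of the double cover. On the generic fibre one has $\div_{\BC_\eta}(f^\iota_{p_\eta}) = \iota\bigl(\div_{\BC_\eta}(f_{p_\eta})\bigr) = -\div_{\BC_\eta}(f_{p_\eta})$, so $f^\iota_{p_\eta} = k_\eta / f_{p_\eta}$ for some $k_\eta \in K(\eta)^*$; the normalization $f_{p_\eta}(s^{i_0}_\eta) = 1$, together with the fact that $\iota$ fixes the ramification point $s^{i_0}_\eta$, forces $k_\eta \equiv 1$, so that the identity $\div_{\overline{\BC}}(f^\iota_{p_\eta}) = -\div_{\overline{\BC}}(f_{p_\eta})$ holds on all of $\overline{\BC}$ and not just generically. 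Since $\iota$ swaps $\BC^1_H$ and $\BC^2_H$ and negates $\mathcal{H}$, comparing coefficients gives $m_1 = -m_2$. Note that your closing intuition is backwards: the normalization does not ``break'' the $\Q(\sqrt\Delta)$-symmetry to prevent cancellation --- on the contrary, it is precisely what makes the symmetry argument usable, by killing any vertical zero or pole of $k_\eta$ along $H^c_\Delta$. Once $m_1 = -m_2$ is in hand, the relation $m_1 - m_2 = 1$ (equivalently, the contradiction $(\mathcal{H},\BC^1_H) = 1 \neq 0$ if one supposes the vertical part vanishes) yields $a\neq 0$, and in fact $a=\tfrac{1}{2}$. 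Your indecomposability argument is fine and matches the paper's one-line remark following the theorem; the incidental claim that smoothness of $\mathcal{E}_p \to S$ forces $\div(g_{p_z})$ to have no vertical component is a non sequitur, but that component is in any case decomposable and harmless.
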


\begin{proof}

	Let $H=H_{\Delta}^c$. To compute the boundary, we  compute the divisor of $f_{p_{\eta}}$ on the closure of  $\BC_{\eta}$.  In the fibre over  $H$, the strict transform $\BC_H=\BC_{\eta}|_H$ splits into components $\BC_H^1 \cup \BC^2_H$. 
	
	Let $\iota$ denote the involution determined by the double cover. The points $p_{1,H}$ and $p_{2,H}$ lie on different components of $\BC_H$ as $\iota(p_{1,H})=p_{2,H}$ and $\iota(\BC_H^1)=\BC_H^2$.  We can assume without loss of generality that $p_{1,H}$ lies on $\BC^1_H$. If $E_{p_H}$ is the exceptional fibre then $\iota(E_{p_H})=E_{p_H}$. 
	
	We have  
	$$\div_{\overline{\BC}}(f_{p_{\eta}})=\mathcal{H}+a \tilde{C}_H^1+b \tilde{C}_H^2$$
	where $\mathcal{H}$ is $\overline{\div_{\BC}(f_{p_{\eta}}})$ is the closure of the horizontal divisor $p_{1,H}-p_{2,H}$. 
	
	We first claim $a=-b$. To see this observe that the function $f^{\iota}_{p_{\eta}} = f_{p_{\eta}} \circ \iota$ satisfies 
	$$\div_{\BC}(f^{\iota}_{p_{\eta}})=-\div_{\BC}(f_{p_{\eta}})$$
	Therefore $$f^{\iota}_{p_{\eta}}=\frac{k_{\eta}} {f_{p_{\eta}}}$$
	for some constant $k_{\eta}$. Since, by choice, $f_{p_{\eta}}(s^{i_0}_{\eta})=1$ and $\iota(s^{i_0}_{\eta})=s^{i_0}_{\eta}$ we have $f^{\iota}_{p_{\eta}}(s^{i_0}_{\eta})=f(s^{i_0}_{\eta})=1$. Hence $k_{\eta} \equiv 1$ and we have
	$$\div_{\overline{\BC}}(f^{\iota}_{p_{\eta}})=-\div_{\overline{\BC}}(f_{p_{\eta}})=-\mathcal{H}-a \BC_H^1-b \BC_H^2$$
	On the other hand  $\div \circ \, \iota = \iota \circ \div$ and  $\iota(\BC_H^1)=\BC_H^2$  so one has 
	$$\div_{\overline{\BC}}(f^{\iota}_{p_{\eta}})=-{\mathcal H} + b \BC_H^1 + a\BC_H^2$$
	Comparing coefficients shows that $a=-b$. Hence 
	$$\div_{\overline{\BC}}(f_{p_{\eta}})=\mathcal{H}+a ( \BC_H^1 - \BC_H^2).$$
	We now claim $a \neq 0$. Suppose $a=0$. Then $f_{p_{\eta}}|_{\BC_{H}^1}$ is a function on $\BC_{H}^1$ hence its divisor has degree $0$. Therefore
	$$0=\deg(\div_{\overline{\BC}}(f_{p_{\eta}}|_{\BC_H^1}))=(\div_{\overline{\BC}}(f_{p_{\eta}}),\BC_H^1)=( {\mathcal H},\BC_H^1)=1$$
	as ${\mathcal H} \cap \BC_H^1=\{p_{1,H}\}$ so $({\mathcal H},\BC_H^1)=1$. So we have a contradiction and hence $a \neq 0$. 
	
	The other component of the motivic cycle is  $(E_{p_{\eta}},g_{p_{\eta}})$. The closure of the curve $E_{p_{\eta}}$ remains irreducible over $H$. Hence 
	$$\div_{\overline{E_{p_{\eta}}}} (g_{p_{\eta}})=-\mathcal{H}$$
	
	Adding the two gives 
	$$\partial(Z^c_{\Delta})=a(\BC_H^1-\BC_H^2)$$ 
	for some $a\neq 0$. 
	
	In the case when there is a decomposable element $Z'=(\BC,g)$ with boundary $b(\BC_H^1+\BC_H^2)$ for some $b \neq 0$  we have $a=\frac{1}{2}$. This is because we can consider $Z''=bZ^c_{\Delta}-aZ'$. Computing the boundary as above we have 
	$$\div_{\overline{\BC}}(f_{p_{\eta}}^{b}g^{-a})=b{\mathcal H}+2ab \BC_H^1$$
	and 
	$$0=\deg(\div (f^b_{p_{\eta}}g^{-a}|_{\BC_H^2}))=({\mathcal H},\BC^2_{H})+2a(\BC_H^1,\TC_H^2)=-b+2ab$$
	so since $b\neq 0$,  $a=\frac{1}{2}$.
	
\end{proof}

Since the boundary consists of cycles that are not defined in the generic fibre it is indecomposable. The cycle $\Xi_{\Delta}^c$ has boundary $a \left(D^1_H-D^2_H\right)$. 

We now show that one can assume that the boundary only has `exceptional cycles'.

\begin{lem} Let $X$ be a subvariety of the Siegel modular threefold. Let $W \rightarrow  X$ be the universal family which we assume exists.  Let $\Xi$ be a motivic cycle in $H^3_{\M}(A_{\eta},\Q(2))$, where $A_{\eta}$ is the generic fibre of the universal family. Assume the Picard number of $A_{\eta}$ is $r$ generated by $D^i_{\eta}, 1 \leq i \leq r$. Note that $r$ can be at most $3$ and the maximal Picard number of an Abelian surface in characteristic $0$ is $4$.  Suppose the boundary is 
	$$	\partial(\Xi)=\sum_x b_x S_x+  a^1_{x}D^1_x + \dots + a^r_{x} D^{r}_x $$
	where  $D^i_x=D^i_{\eta}|_x$ and $S_x$ is a cycle which exists only on the special fibre $A_x$ and is such that $(S_x.D^i_x)=0$  for all $i$.
	
	Then there exist decomposable elements of the form $(D_{\eta}^i,g^i)$ such that if $\tilde{\Xi}=\Xi - \sum (D_{\eta}^i,g^i)$ then 
	$$\partial(\tilde{\Xi})=\sum b_x S_x$$
	
	\label{decomp}
	
\end{lem}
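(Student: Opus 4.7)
The plan is to reduce the lemma to showing that for each $i$, the divisor $E^i := \sum_x a^i_x\, [x]$ on $X$ is zero in $\Pic(X) \otimes \Q$. Granting this, one picks $g^i \in K(X)^* \otimes \Q$ with $\div(g^i) = E^i$, and the boundary formula for decomposable cycles recalled just after the localization sequence above,
$$\partial\bigl((D^i_\eta, g^i)\bigr) \;=\; \sum_x \ord_x(g^i)\, D^i_x \;=\; \sum_x a^i_x\, D^i_x,$$
shows at once that $\tilde{\Xi} := \Xi - \sum_i (D^i_\eta, g^i)$ has boundary $\sum_x b_x S_x$, as required.

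To prove the vanishing $E^i = 0$ in $\Pic(X) \otimes \Q$, I would exploit exactness of the localization sequence: $\partial(\Xi)$ dies in $H^4_{\M}(W, \Q(2)) = CH^2(W) \otimes \Q$, so the relation
$$0 \;=\; \sum_x b_x S_x + \sum_{i,x} a^i_x D^i_x$$
holds in $CH^2(W) \otimes \Q$. Letting $D^i$ denote the closure of $D^i_\eta$ in $W$, I intersect this relation with the divisor $D^j$ on $W$ and push forward along $\pi \colon W \to X$. The orthogonality hypothesis $(S_x . D^j_x) = 0$ makes $\pi_*(D^j \cdot S_x) = 0$, while flatness of the family keeps the fibre intersection numbers constant, yielding
$$\pi_*\bigl(D^j \cdot D^i_x\bigr) \;=\; (D^i_\eta . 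D^j_\eta)\, [x] \;=\; M_{ij}\, [x] \;\in\; \Pic(X) \otimes \Q.$$
One obtains the system $\sum_i M_{ij}\, E^i = 0$ in $\Pic(X) \otimes \Q$ for each $j$.

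Since $D^1_\eta, \dots, D^r_\eta$ are a basis of $\mathrm{NS}(A_\eta) \otimes \Q$, the matrix $M = (M_{ij})$ is non-degenerate by the Hodge index theorem on the Abelian surface $A_\eta$. Inverting $M$ over $\Q$ forces $E^i = 0$ in $\Pic(X) \otimes \Q$ for every $i$, and the reduction in the first paragraph then produces the desired $\tilde{\Xi}$.

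The hard part will be the intersection-theoretic step --- rigorously justifying the identity $\pi_*(D^j \cdot D^i_x) = M_{ij}\, [x]$ in the possibly non-proper, possibly mildly singular universal-family setting. This requires passing to a suitable smooth compactification of $W \to X$ so that the pushforward is well-defined, arranging (perhaps after modifying the closures) that each $D^i$ is flat over $X$ so that fibre intersection numbers are preserved, and applying the projection formula with care. Once these technicalities are in hand, the algebraic content of the argument is merely the non-degeneracy of the N\'eron--Severi intersection form on an Abelian surface, so the lemma should follow cleanly.
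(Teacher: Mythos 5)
Your argument is correct in outline and reaches the right conclusion, but it follows a genuinely different route from the paper's. The paper constructs the functions $g^i$ directly: it picks a dual basis of divisor classes $M_i$ in $CH^1(W)\otimes\Q$ with $(M_i\cdot D^j_\eta)=\delta_{ij}$ (automatically $(M_i|_{A_x}\cdot S_x)=0$, since $S_x$ is orthogonal to the span of the $D^j_x$), forms the product $\Xi\cap M_i\in H^5_{\M}(A_\eta,\Q(3))$, and takes $g^i:=\pi_*(\Xi\cap M_i)\in H^1_{\M}(\eta,\Q(1))=K(X)^*\otimes\Q$; the compatibility of $\partial$ with products and pushforward then gives $\div(g^i)=\sum_x a^i_x\,x$ at once. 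You instead observe that exactness kills $\partial(\Xi)$ in $CH^2(W)\otimes\Q$, intersect that vanishing relation with the closures $D^j$, push forward to $\Pic(X)\otimes\Q$, and invert the N\'eron--Severi Gram matrix to conclude each $E^i$ is a principal $\Q$-divisor. These are two faces of the same non-degeneracy input: the paper's dual basis does the matrix inversion once and for all, and working at the level of cycles sidesteps the flatness/properness bookkeeping you correctly flag as the delicate point in your pushforward $\pi_*(D^j\cdot D^i_x)=M_{ij}[x]$. What your version buys is that it avoids $H^5_{\M}(A_\eta,\Q(3))$ entirely and keeps the argument inside classical intersection theory of divisors; what the paper's version buys is an explicit formula for $g^i$ without having to separately invoke ``a $\Q$-divisor trivial in $\Pic\otimes\Q$ is the divisor of a $\Q$-rational function.''
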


\begin{proof} Let $M_i$ be cycles in $CH^1(W) \otimes \Q=H^2_{\M}(W,\Q(1))$ such that $(M_i,D^j_{\eta})=\delta_{ij}$ and $(M_i|_{A_x}.S_x)=0$.  $M_i|_{A_{\eta}}$ is in $H^2_{\M}(A_{\eta},\Q(1))$.  The cycle $M_i \cap \Xi$ lies in $H^5_{\M}(A_{\eta},\Q(3))$ and has boundary 
	$$\partial(\Xi \cap M_i)=\sum_x \sum_j a^j_x (D^j_\eta \cap M_i)_x$$ 
	Let $g^i$ be the direct image $\pi_*(\Xi \cap M_i)$ in $H_{\M}^1(\eta,\Q(1))$. This is a function on the base $X$ with divisor 
	$$\div(g^i)=\sum a^i_x x$$
	Hence $(D_{\eta}^i,g^i)$ is a decomposable element with boundary $\sum_x a_x D^i_x$. Subtracting this from $\Xi$ for each $i$ gives us the cycle $\tilde{\Xi}$ with boundary 
	$$\partial(\tilde{\Xi})=\sum_x b_x S_x.$$
	\end{proof}

Applying this lemma to the cycles $\Xi_{\Delta}^c$ we get cycle $\tilde{\Xi}_{\Delta}^c$ whose boundary is supported only on the extra cycles. Since we know $\tilde{\Xi}_{\Delta}^c$ is indecomposable, $b_x$ is non-zero for $x$ lying on $H_{\Delta}^c \cap X$.

The Picard number of an Abelian surface in characteristic $0$ can be at most $4$ and over a modular curve it is $3$. If the Picard number is $4$ the Abelian surface is a product of isogenous $CM$ elliptic curves. This happens at $CM$ points $x$ on the modular curve. A choice of the generator of the orthogonal complement of the generic Neron-Severi is called the $CM$-cycle at $x$ and the divisor $S_x$ is essentially a multiple of the $CM$ cycle at that point.

\begin{rem}[A remark about components]. The number of components of $H_{\Delta}$ is either $6,10$ or $15$ depending on if $\Delta \equiv 5 \mod 8$, $\Delta\equiv 1 \mod 8$ or $\Delta \equiv 0 \mod 2$. Curiously, even in the case when the rational curve is a conic, all three possibilities occur. The configuration determined by a rational curve passing through $5$ points and tangent to the $6$th line is the case $\Delta=5$. There are $6$ components in this case and the number of possibilities of rational curves passing through $5$ points and tangent to the $6$th line is $6$. Similarly,  there are $15$ ways of choosing $2$ lines and $20$ ways of choosing $3$ lines and each of these choices corresponds to a component (its possible that more than one configuration corresponds to the same component). Something similar must hold for higher $d$ but the combinatorics is a little harder. 
		
		\end{rem}

\subsubsection{An example: Conics and Humbert's theorem.}
\label{z5construction}

In the case when $d$ is small one can do this construction quite explicitly. In this section we construct the motivic cycle in the case  $\Delta=5$  In this case one has the following beautiful theorem of Humbert. 

\begin{thm}[Humbert\cite{humb}] If $A_z$ is an Abelian surface with moduli point $z$ then $z$ lies on $H_5$ if and only if there is a {\bf smooth conic} $Q$ in $\CP^2_{A_z}$ passing through five of the fifteen points $q^{ij}$ and tangent to the sixth line. 
\end{thm}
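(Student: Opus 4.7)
The plan is to establish each implication separately, using the Kummer-plane geometry already set up.

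\textbf{Forward direction.} If $z \in H_5$, i.e.\ $A_z$ carries a line bundle of Humbert invariant $5$, this falls out as the case $(m,k) = (1,6)$ of Case~I of the Birkenhake--Wilhelm theorem stated above: the numerology gives $\Delta = 8\cdot 1 + 9 - 12 = 5$, degree $d = 2m = 2$, and $k-1 = 5$ of the nodes $q^{ij}$. Birkenhake--Wilhelm then produces an irreducible rational plane curve $Q$ of degree $2$ passing through $5$ of the nodes and meeting the sixth line at an even-multiplicity point. An irreducible plane curve of degree 2 is automatically a smooth conic (the degree-2 alternatives being pairs of distinct lines, which are reducible, or double lines, which are non-reduced), and ``even-multiplicity intersection with a single line in a single point'' forces multiplicity exactly $2$, i.e.\ tangency.

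\textbf{Reverse direction.} Given such a smooth conic $Q$, I would produce a divisor on $A_z$ of Humbert invariant $5$ as follows. Write $S = \prod_{i=1}^6 l^i$. Then $Q \cdot S = 2 \cdot 6 = 12$, distributed as multiplicity $2$ at each of the $5$ chosen nodes (two local branches of $S$ each meet $Q$ transversally there) plus multiplicity $2$ at the tangent point on the sixth line. So every local contribution to $Q \cap S$ is even. Lifting $Q$ along the double cover $\pi : K_{A_z} \to \CP^2_{A_z}$, the branch divisor $Q \cap S$ of the induced cover $\pi^{-1}(Q) \to Q$ is trivial in $\Pic(\CP^1)/2$, which is exactly the hypothesis of the double-cover lemma established earlier in the paper. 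The lemma then gives a splitting of the normalization as $\tilde C_1 \sqcup \tilde C_2$, two rational components swapped by the $\pi$-involution. Pulling $\tilde C_1$ back through the Kummer quotient $\phi : A_z \to K_{A_z}$ yields a divisor $D$ on $A_z$, and I would declare this to be the candidate class of Humbert invariant $5$.

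\textbf{Main obstacle.} The remaining step, and the delicate part of the argument, is to verify the numerical identity $(D.\Theta)^2 - 2D^2 = 5$. This requires tracking the class of $D$ through $A_z \xrightarrow{\phi} K_{A_z} \xrightarrow{\pi} \CP^2_{A_z}$, using $\phi^* \OO_{\CP^3}(1) = \LL_{\Theta}^{\otimes 2}$ (since $\phi$ is the Kummer embedding by twice the principal polarization), $\deg \pi = 2$, $\deg Q = 2$, and the combinatorics of which $5$ of the $15$ nodes $Q$ passes through. Keeping the many factors of $2$ straight --- from the Kummer quotient, from the plane cover $\pi$, from the degree of $Q$, and from the square of the polarization --- so that they assemble into exactly $5$ is the real work. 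A guiding sanity check is that in the Birkenhake--Wilhelm framework a divisor $D \in |\LL_\Theta^a \otimes \LL_\Delta^b|$ satisfies $H(D) = b^2 \Delta$, so the specialization $b = 1, \Delta = 5$ is the target value; one must however carry out the verification directly from the conic in order to avoid circularity. Finally, the independence of the class of $D$ from $\Theta$ (needed for $H(D) \neq 0$) follows because $\tilde C_1$ is exchanged with $\tilde C_2$ by the $\pi$-involution, whereas $\pi$-pullbacks of plane curves are invariant, so $D$ is not proportional to $\phi^*\pi^* \OO_{\CP^2}(r)$ for any $r$.
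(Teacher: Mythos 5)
The paper states this as Humbert's classical result (citing \cite{humb}) and offers no proof of its own, so your attempt stands alone. The forward implication you give is correct: plugging $(m,k)=(1,6)$ into Case~I of the Birkenhake--Wilhelm table yields $\Delta=5$, $d=2$, five nodes, and the intersection-degree budget $2\cdot 6 = 2\cdot 5 + 2$ forces simple tangency to the sixth line; irreducibility of a degree-two plane curve then forces it to be a smooth conic. (There is a whiff of circularity, since historically Birkenhake--Wilhelm generalize Humbert's $\Delta=5$ case; but inside the paper's logical economy, which takes Birkenhake--Wilhelm as given, the deduction is legitimate.)

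The reverse implication — the substantive half of Humbert's ``if and only if'' — is not actually proved. Your setup is right: the evenness of all local intersection numbers of $Q$ with the sextic lets you apply the paper's double-cover lemma, splitting the normalization as $\TC_1 \sqcup \TC_2$, and a component pulls back to a candidate divisor $D$ on $A_z$. But you then defer the verification of $(D.\Theta)^2 - 2D^2 = 5$ as ``the real work'' and stop. That is a genuine gap, not a routine cleanup: without the numerical identity you have shown only that the conic configuration forces $z$ onto \emph{some} Humbert locus, and the entire content of Humbert's theorem is that the invariant is exactly $5$. To close it one must actually compute $D\cdot\Theta$ and $D^2$ by pushing the class of $Q$ through $\phi$ and $\pi$ (using $\phi^*\pi^*\OO_{\CP^2}(1)=\LL_\Theta^{\otimes 2}$, the degree of $Q$, and the count of two-torsion points lying on $D$), and check that these assemble to $5$; appealing to the Birkenhake--Wilhelm formula $H(D)=b^2\Delta$ would be circular here, as you note. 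The anti-invariance remark is a good sanity check that $D$ is not proportional to $\Theta$, but it does not pin down the invariant.
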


As is well known, there is a conic passing through any $5$ points in $\CP^2$. Hence the cycles determining the element $Z_5$ will come from the conic passing through either the $5$ points $q^{ij}$ or four of the points $q^{ij}$  and the point of tangency. While in Theorem \ref{motiviccycle} we considered the curve passing through all but one of the $q^{ij}s$ in this argument we will use the curve which is no longer tangent to the sixth line. 

To make this explicit, we  use the conventions set up by Hashimoto-Murabayashi \cite{hamu}. Let $A$ be an Abelian surface. Let $\phi$ be the map to $\CP^3$ and assume $\phi(0)=[0,0,0,1]$. The Kummer surface $K_A$ can be though of as a double cover of the conic 
$$yz=x^2$$
in $\CP^2$. Three of the points of tangency can be chosen to be $a_4=0=[0,0,1]$, $a_5=1=[-1,1,1]$ and $a_6=\infty=[0,1,0]$ and the other three are $a_i=[-a_i,a_i^2,1]$, $i \in \{1,2,3\}$. In this set up the lines are
$$l^i:y+2a_ix+a_i^2z=0$$
In particular, the line $l^4:y=0$ and $l^6=z=0$. The hyperelliptic curve $H$ such that $A=J(H)$ is given by 
$$H:y^2=x(x-1)(x-a_1)(x-a_2)(x-a_3)$$
The points $q^{ij}$ can be computed easily --
$$q^{ij}=l^i \cap l^j =[-(a_i+a_j),2a_ia_j,2]$$
as long as $i \neq 6$ and 
$$q^{i6}=l^6 \cap l^i=[-1,2a_i,0].$$

\subsubsection{The equation of a conic passing through $5$ points}

We are interested in cases when the extra rational curve in $\CP^2$ is a conic. The equation of a conic passing through $5$ points is given as follows. Let $q_i=[x_i,y_i,z_i]$ $1\leq i \leq 5$ be the five points. Then the equation of the conic is given by $\det(A)=0$, where $A$ is the matrix 
$$A=\begin{pmatrix} x^2 & y^2 &z^2 & xy & xz & yz\\
x_1^2 & y_1^2 &z_1^2 & x_1y_1 & x_1z_1 & y_1z_1\\
x_2^2 & y_2^2 &z_2^2 & x_2y_2 & x_2z_2 & y_2z_2\\
x_3^2 & y_3^2 &z_3^2 & x_3y_3 & x_3z_4 & y_3z_3\\
x_4^2 & y_4^2 &z_4^2 & x_4y_4 & x_4z_4 & y_4z_4\\
x_5^2 & y_5^2 &z_5^2 & x_5y_5 & x_5z_5 & y_5z_5\\
\end{pmatrix}
$$

This gives us the following theorem of Humbert \cite{hamu}[Theorem 2.9].

\begin{thm} Let 
	$$Q:p_1x^2+p_2y^2+p_3z^2+p_4xy+p_5xz+p_6yz=0$$
	be the equation of the conic passing though $q^{12},q^{23},q^{34},q^{45}$ and $q^{51}$ Then 
	\begin{align*}
	& p_1=4a_1a_2a_3(a_1-a_2) \\ & p_2=a_1^2+a_3-a_1a_3^2(a_3^2-a_2^2+a_2-a_3)\\
	& p_3=a_1a_2a_3^2(a_1-a_2)\\
	& p_4=2((a_2a_3+a_3)a_1^2+(-a_2a_3^2+a_2-a_3)a_1-a_2a_3(a_2-a_3))\\
	& p_5=2a_1a_2a_3(a_1-a_2)(a_3+1)\\
	& p_6=(-a_2^2a_3-a_2^2+a_3^2+a_2)a_1^2+(-a_2^2a_3^2-a_3^2)a_1-a_2a_3(a_2-a_3)
	\end{align*}
	In general, the line $l^6:z=0$ will meet $Q$ at two points $s^6_1$ and $s^6_2$. The condition that this conic is tangent to $l^6$,  namely $s_1^6=s_2^6$, is  $p_4^2-4p_1 p_2=0$. If this holds the moduli point lies on a component of $H_5$.
	
\end{thm}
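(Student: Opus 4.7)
The proof is a direct computation from the explicit coordinates of the five $q^{ij}$. First I would record the points using $a_4=0$ and $a_5=1$ in the parametrization set up above:
$$q^{34}=[-a_3,0,2],\quad q^{45}=[-1,0,2],\quad q^{51}=[-(1+a_1),2a_1,2],$$
together with $q^{12}=[-(a_1+a_2),2a_1a_2,2]$ and $q^{23}=[-(a_2+a_3),2a_2a_3,2]$, and substitute them into the matrix $A$. The conic $p_1x^2+p_2y^2+p_3z^2+p_4xy+p_5xz+p_6yz=0$ passes through all five points precisely when $\det(A)=0$, so expanding along the first row expresses each $p_i$ (up to sign) as the $5\times 5$ minor obtained by deleting the $i$-th column of the lower block of $A$.

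The next step is to evaluate the six minors. I would first strip the common factor $2$ from the third coordinate of every row and the common factor $2$ from each nonzero $y$-coordinate by scalar row operations; these multiply every $p_i$ by the same global constant, which is absorbed into the projective scaling of the conic. Crucially, $q^{34}$ and $q^{45}$ have vanishing $y$-coordinate, so in the rows for these two points only the columns for $x^2$, $z^2$ and $xz$ contribute. Each $5\times 5$ minor therefore collapses (via a Laplace expansion along those two rows) to a $3\times 3$ determinant in the remaining rows $q^{12},q^{23},q^{51}$, weighted by the appropriate $2\times 2$ minor in the $\{x^2,z^2,xz\}$-columns. Expanding those $3\times 3$ determinants and collecting polynomials in $a_1,a_2,a_3$ yields the claimed formulas for $p_1,\ldots,p_6$.

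For the tangency statement, setting $z=0$ in the conic equation leaves the binary quadratic form $p_1x^2+p_4xy+p_2y^2$, whose zero set in $\{z=0\}$ consists precisely of the two intersection points $s^6_1,s^6_2$ of $Q$ with $l^6$. These two points coincide if and only if the discriminant of this binary quadratic vanishes, i.e.\ $p_4^2-4p_1p_2=0$, which is therefore the exact condition for $Q$ to be tangent to $l^6$; by Humbert's theorem the corresponding moduli point then lies on a component of $H_5$.

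The main obstacle is purely the bookkeeping of the minor expansions, which produce polynomials of degree up to six in $a_1,a_2,a_3$. I would cross-check each $p_i$ against the natural symmetries of the configuration: cyclic permutations of $(q^{12},q^{23},q^{34},q^{45},q^{51})$ should realise algebraic symmetries among the $p_i$, and the degenerate specialisations $a_1=a_2$, $a_2=a_3$ or $a_i\in\{0,1\}$ force two of the five points to collide, so the conic must split as a product of two lines and the corresponding linear factor should appear in each $p_i$. These checks are enough to pin down the constant multiples in the expressions and to catch any sign errors in the expansion.
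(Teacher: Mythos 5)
The paper itself does not prove this theorem; it is cited directly from Hashimoto--Murabayashi, \cite{hamu} Theorem~2.9, immediately after the $6\times 6$ determinant criterion is recalled. So the implied method of proof is exactly the one you use: write down the matrix $A$ with the rows the degree-$2$ monomials at $q^{12},q^{23},q^{34},q^{45},q^{51}$ (whose coordinates you record correctly, using $a_4=0,a_5=1$), read the $p_i$ off as the signed $5\times 5$ minors of the first row, and then observe that the restriction of $Q$ to $l^6=\{z=0\}$ is the binary form $p_1x^2+p_4xy+p_2y^2$, which has a double root iff $p_4^2-4p_1p_2=0$; the last clause is Humbert's criterion. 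In outline this is correct and is the natural route.

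Two of your supporting remarks need repair. First, the Laplace-expansion reduction to a single $3\times 3$ determinant only happens for $p_1,p_3,p_5$. The rows at $q^{34}$ and $q^{45}$ are supported on the columns $\{x^2,z^2,xz\}$. Deleting one of those columns (which computes $p_1,p_3,p_5$) leaves two supported columns, and the expansion along those two rows is a single product of a $2\times 2$ block with a complementary $3\times 3$ block in $\{y^2,xy,yz\}$. But deleting $y^2$, $xy$, or $yz$ (which computes $p_2,p_4,p_6$) leaves all three of $\{x^2,z^2,xz\}$ in play, so the Laplace expansion along the two $y=0$ rows yields a \emph{sum of three} such products; this is also visible in the theorem's formulas, where $p_1,p_3,p_5$ have a clean factored form with the common factor $a_1a_2a_3(a_1-a_2)$ while $p_2,p_4,p_6$ are genuine polynomials.

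Second, your degeneration sanity check is not correct as stated. When $a_1=a_2$ no two of the five chosen points coincide; what happens is that $q^{12},q^{23},q^{51}$ become collinear (and $q^{34},q^{45}$ always lie on $l^4=\{y=0\}$), so the conic degenerates to $y\cdot(p_2y+p_4x+p_6z)=0$. This forces $p_1=p_3=p_5=0$ but does \emph{not} force $p_2,p_4,p_6$ to vanish, and indeed $(a_1-a_2)$ divides only the former three. So the criterion ``the corresponding linear factor should appear in each $p_i$'' would flag a nonexistent error. Similarly, the ``cyclic permutation'' symmetry is broken by the normalization $a_4=0,a_5=1,a_6=\infty$, so it does not give a clean check without an accompanying M\"obius renormalization. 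None of this affects the core of the argument, which is simply the direct minor computation plus the discriminant observation.
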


Note that these expressions are not symmetric in the $a_i$ and changing them around gives the conics corresponding to other choices of points.

\subsubsection{The motivic cycle.}

Let $C$ be the double cover of $Q$ in $K_A$. This is an irreducible rational curve  ramified at the points $q^{12}, q^{23}, q^{34}, q^{45}$ and $q^{51}$ as well as the points $s^6_1$ and $s^6_2$. In the $K3$ surface $\TK_A$ the strict transform $\BC$ remains an irreducible rational curve ramified at $2$ points. 

Let $p$ be the point $q^{45}=[\frac{-1}{2},0,1]$ with $p_1$ and $p_2$ being the points lying over it. Let $f_p$ be the function with divisor $\div(f)=p_1-p_2$. Let $g_p$ be the function on $E_p$ with divisor $\div(g_p)=p_2-p_1$. The cycle in $\TK_A$ is given by 
$$Z_5=(\BC,f_p)+(E_p,g_p)$$

\subsubsection{The function $f_p$.}

With our choice, we have assumed that the point $0$ on $A$ maps to $[0,0,0,1]$ and the $\CP^2_A$ is given by $[x,y,z,0]$. The projection map $\pi:\CP^3 \rightarrow \CP^2$ is given by $[x,y,z,w] \rightarrow [x,y,z,0]$.

We want to describe the function $f=f_p$ in terms of $x$ and $y$ in $\CP^2_A$, at least upto a  choice of sign and constant.

Recall that the conic in $\CP^2$ is given by $yz=x^2$. Let 
$$S(x,y,z)=\prod_{i=1}^{6} l^i(x,y,z)$$
be the equation of the (degenerate) sextic. The equation of the double cover of $\CP^2$ ramified at the six lines $l^i(x,y,z)$ is given by 
$$w^2=S(x,y,z)$$
The conic $Q$ meets the six lines at the points $q^{12},q^{23},q^{34},q^{45},q^{51}$ and the two points $s^6_1$ and $s^6_2$ so the double cover is ramified at those points. 

Let $Q(x,y,z)=0$ be the equation of $Q$. Then the double cover $C$ is given by 
$$C=\left\{ [x,y,z,w]| Q_1(x,y,z)=0, w^2=S(x,y,z) \right\} $$
and over a point $[x,y,z]$ one has the two points $[x,y,z,\pm\sqrt{S(x,y,z)}]$. 

$\TK_A$ is obtained by blowing up $K_A$ at the $15$ points $q^{ij}$ and the point $[0,0,0,1]$. $f$ is defined on the strict transform of $C$ by the difference of the two points lying over $q^{45}$. It suffices to define $f$ in the blow up of $C$ at $q^{45}$. Note that $Q$ meets the lines $l^4$ and $l^5$ transversally at $q^{45}$.

To compute the blow up we use local coordinates near the point $[-\frac{1}{2},0,1]$. The lines are given by $y=0$ and $2x+y+z=0$ in $\CP^2$ and since the point is not on $z=0$ we may assume the point lies in $\AB^2$ given by $z=1$. 

The equation of the double cover is given by 
$$w^2=S(x,y)=y(2x+y+1)\prod_{i\neq \{4,5\}} l^i(x,y,z)=y(2x+y+1)H(x,y,z)$$
The blow up on the double cover at the point $[-\frac{1}{2},0,1,0]$  has exceptional fibre $\CP^2$ with coordinates $[t,u,v]$. The equation of the blow up of $K_A$ is given by 
$$(x+\frac{1}{2})u=yt \hspace{1 in} yv=wu \hspace{1in} (x+\frac{1}{2})v=wt$$
$$ w^2=y(x+2y+1)H(x,y,1)$$
Locally near the point $q^{45}$, the equation of the conic $Q$ is of the form $y=(x+\frac{1}{2})G(x)$ where $G(-\frac{1}{2})\neq 0$ as $(-\frac{1}{2},0)$ is a smooth point of the conic. So in the blow up in the fibre $\AB^3_z \times \AB^2_{t}$, where $\AB_*$ means the affine plane where $*=1$ the equation of the  curve is 
$$y=(x+\frac{1}{2})G(x) \hspace {.5in} w^2=y(x+\frac{1}{2})H(x,y) \hspace{.5in} y=(x+\frac{1}{2})u \hspace{.5in} w=(x+\frac{1}{2})v$$
This gives 
$$(x+\frac{1}{2})^2v^2=(x+\frac{1}{2})^2G(x)(G(x)+2)H(x,y)$$
that is 
$$ v=\pm \sqrt{G(x)(G(x)+2)H(x,y)}$$
and we know that $G(-\frac{1}{2}) \neq \{0,-2\}$ since the lines meet the conic transversally. To compute $G(-\frac{1}{2})$ we can use implicit differentiation to get 
$$G(-\frac{1}{2})=\frac{p_1-p_5}{p_6}$$
which is a function of $a_1,a_2$ and $a_3$. 

Let 
$$v_0^{\pm}= \pm \sqrt{G(-\frac{1}{2})(G(-\frac{1}{2})+2)H(-\frac{1}{2},0)}$$
and let 
$$P_1=[-\frac{1}{2},0,1,0],[1,G(-\frac{1}{2}),v^+_0] \hspace{1in} P_2=[-\frac{1}{2},0,1,0],[1,G(-\frac{1}{2}),v^-_0]$$
The parameter $v$ gives a parametrization of the blown up curve since $x,y,w,t$ and $u$ can be expressed in terms of $v$. Over a point $[x,y,1]$ of $Q$ which is unramified there are two points $[x,y,1,\sqrt{S(x,y,1)}$ and $[x,y,z,-\sqrt{S(x,y,1)}]$. In the blow up, over the point $[x,y,1,w]$ there is one point, namely $[x,y,1,w],[1,u,v]$. In the region where the local expression for $y$ as a function of $x$ holds, one has 
$$u=\frac{y}{(x+\frac{1}{2})}=G(x) \hspace{1in} v=\frac{w}{(x+\frac{1}{2})}=\frac{\pm \sqrt{S(x,y,1)}}{(x+\frac{1}{2})}$$

  $w \neq 0$ and $x \neq -\frac{1}{2}$ as the point $[-\frac{1}{2},0,1]$ is a ramified point, namely the point $q^{45}$. 

Hence we can consider the function 
$$f_P(v)=c\frac{v-v_0^+}{v-v_0^-}$$
on the strict transform of the curve $C$. We choose the constant $c$ such that $f_P(s^6_1)=1$. This is a function with 
$$\div(f_P)=P_1-P_2$$
Hence the cycle $Z_5$ is given by 
$$Z_5=(\bar{C},f_P)+(E_P,g_P)$$
This is a element which is determined by $a_1,a_2$ and $a_3$ as long as the corresponding point does not lie on the component of $H_5$ determined by $p_4^2-4p_1p_2=0$.

\section{Algebraicity of values of higher Green's functions.}

In this section we discuss a conjecture of Gross-Kohnen-Zagier and its connection with the indecomposable cycles that we constructed in Theorem \ref{motiviccycle}.

\subsection{The conjecture of Gross-Kohnen-Zagier.}

Gross-Zagier \cite{grza} and later Gross-Kohnen-Zagier \cite{GKZ} conjectured that the values at CM points of certain Green's functions are logaratihms of algebraic numbers. To state it we need some background on higher Green's functions and weakly holomorphic modular forms. 

\subsubsection{Higher Green's functions.}

Let $X=\overline{\HH/\Gamma}$ be a modular curve. It is well known that this parameterises elliptic curves with certain additional structure. Gross and Zagier \cite{grza} considered certain  `higher' Green's functions on $X \times X$ defined as follows. Let $\bar{\Gamma}$ denote the image of $\Gamma$ in  $PSL_2(\ZZ)$. Let $k\geq 1$. The {\em Higher Green's function of weight $k$} for $X$ is defined by 
$$
G^X_{k}(z_1,z_2)=\frac{-2}{[\Gamma:\bar{\Gamma}]} \sum_{\gamma \in \Gamma} \QQ_{k} \left( 1+\frac{|z_1-\gamma z_2|^2}{2 \Im(z_1) \Im(\gamma z_2)}\right).
$$
Here $\QQ_s$ is the Legendre function of the second kind defined by the Laplace integral 
$$
\QQ_{s-1}(t):=\int_0^{\infty} \frac{du}{(t+\sqrt{t^2-1} \cosh(u))^{s}}, \hspace {1cm} t>1,s>1.
$$

\subsubsection{Harmonic weak Maass forms and weakly holomorphic forms.}

Let $\Gamma'= Mp_2(\ZZ)$ be the metaplectic group. If $L$ is an even lattice of type $(2,1)$  one has the Weil representation 
$$\rho_L:Mp_2(\ZZ) \longrightarrow \C[L'/L]$$
where $L'$ is the dual lattice. Let $M_{k,\rho_L}(\Gamma')$ and $M^!_{k,\rho_L}(\Gamma')$ denote the spaces of modular forms and weakly holomorphic modular forms of weight $k$ with respect to the Weil representation $\rho_L$  respectively. Let $H^!_{k,\rho_L}(\Gamma')$ denote the space of harmonic weak Maass forms of weight $k$ with respect to $\rho_L$. The definition of these spaces can be found in \cite{BEY}, Section 3.2, for instance. 

One has a differential operator 
$$\xi_k: H^!_{k,\rho_L}(\Gamma') \longrightarrow M_{2-k,\bar{\rho}_L}(\Gamma')$$
$$ \xi_k(f)(\tau)=y^{k-2} \overline {\left(-2i y^2\frac{\partial f}{\partial \bar{\tau}}\right)}$$
Let $H_{k,\rho_L}(\Gamma')$ denote the subspace of $H^!_{k,\rho_L}(\Gamma')$ which maps to the cusp forms $S_{2-k,\bar{\rho}_L}(\Gamma')$ under $\xi_k$. The space of weakly  holomorphic forms is  the kernel of $\xi_k|_{H_{k,\rho_L}(\Gamma')}$ and one has a sequence \cite{BEY} Section 3.2

$$ 0 \longrightarrow M^!_{k,\rho_L}(\Gamma') \longrightarrow H_{k,\rho_L}(\Gamma') \stackrel{\xi_k}{\longrightarrow} S_{2-k,\rho_{L}}(\Gamma') \longrightarrow 0$$
The space of harmonic weak Maass forms can also be identified with the space of `mock modular forms' and corresponding map to the space of modular forms is called the `shadow map'.

\subsubsection{The conjecture of Gross, Kohnen and  Zagier.}

Let $G_s(z_1,z_2)$ be the Green's function defined above for $\Gamma=SL_2(\ZZ)$. Let 
$$G_s^m(z_1,z_2)=G_s(z_1,z_2)|T_m=-2 \sum_{\substack{\gamma \in M_2(\ZZ) \\ \det(\gamma)=m}} Q_{s-1}\left(1+\frac{|z_1-\gamma z_2|^2}{2Im(z_1)Im(\gamma z_2)}\right)$$
be the translate of $G_s$ under the action of the Hecke correspondence $T_m$ on either of the two variables. Let $f=\sum_m c_f(m) q^m$ be a weakly holomorphic modular form of weight $-2j$ for $\Gamma$ and let 
$$G_{1+j,f}(z_1,z_2)=\sum_{m>0} c_f(-m) m^j G^m_{j+1}(z_1,z_2)$$
Note that this a finite sum. The Gross-Zagier conjecture the following. For a discriminant $d<0$ let $\OO_d$ denote the ring of integers of $\Q(\sqrt{d})$ and $H_d$ its Hilbert class field. 

\begin{conj}[Gross-Kohnen-Zagier \cite{GKZ}] Assume that $c_f(m) \in \ZZ$ for all $m<0$. Let $z_1$ be a $CM$ point of discriminant $d_1$ and $z_2$ a $CM$ point of discrimant $d_2$ such that $(z_1,z_2)$ does not lie  on $Z(f)=\sum c_f(-m) T(m)$, where $T(m)$ is the $m^{th}$-Hecke correspondence in $X \times X$. Then there is an $\alpha$ in $H_{d_1}\cdot H_{d_2}$ such that 
	$$(d_1d_2)^{j/2} G_{j+1,f}(z_1,z_2)=\frac{w_{d_1}w_{d_2}}{4}\cdot \log|\alpha|$$
	
\end{conj}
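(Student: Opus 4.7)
The plan is to reduce the Gross--Kohnen--Zagier conjecture to the existence and regulator computation of a suitable indecomposable motivic cycle associated to $f$, in the spirit of Conjecture \ref{conj1}, and then extract the algebraicity from the localization of that cycle at the CM point $z_1$. Concretely, for weight $-2j$ I would seek a motivic class $\Xi_f \in H^{2j+1}_{\M}(A^j_\eta, \Q(j+1))$ (over the modular curve $X$) whose boundary under the localization sequence is a $\Q$-linear combination $\sum_m c_f(-m)\, S_m$ of Heegner/CM cycles representing $Z(f) = \sum c_f(-m)\,T(m)$, up to decomposable elements.

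First I would package the weakly holomorphic form $f$ into such a cycle. For $j=0$ this is essentially Borcherds' theorem; for $j=1$ the cycles of Theorem \ref{motiviccycle}, indexed by admissible discriminants $\Delta$ from Birkenhake--Wilhelm, are the atomic building blocks — the weakly holomorphic form dictates the $\Q$-linear combination to take, and Lemma \ref{decomp} cleans up decomposable debris so the remaining boundary is supported on the intended CM divisor. Second, I would compute $\langle \reg(\Xi_f), \w_{z_2}\rangle$ for $\w_{z_2}$ a distinguished $(j,j)$-form on $A^j_{z_2}$, and identify it, via Zhang's \cite{zhan} expression for the Green's currents of CM cycles together with the Bruinier--Ehlen--Yang Borcherds lift $\Phi_j(f,\cdot)$, with $(d_1 d_2)^{j/2}\, G_{j+1,f}(\cdot, z_2)$ up to the normalizing constant $w_{d_1} w_{d_2}/4$.

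Third, for the algebraicity at $z_1$: since $(z_1,z_2)$ avoids $Z(f)$, the cycle $\Xi_f$ restricts to a well-defined class in $H^{2j+1}_{\M}(A^j_{z_1}, \Q(j+1))$, and its pairing against $\w_{z_2}$ produces an element of a higher $K$-cohomology group of the CM fibre whose regulator is visibly the logarithm of an algebraic number. In the $j=1$ case this follows the mechanism of Theorem \ref{algebraicity}: the pairing with the CM cycle at $z_2$ evaluates to
\[
\log\!\left(\prod_{x \in \BC|_{z_2}\cap S_{z_2}} |f_P(x)|^{n_x} \prod_{x \in E_P|_{z_2}\cap S_{z_2}} |g_P(x)|^{m_x}\right),
\]
where $f_P, g_P$ are the rational functions of Proposition \ref{construction} and the intersection points are defined over $H_{d_1}\!\cdot\!H_{d_2}$, yielding the required $\alpha$.

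The main obstacle is the first step — producing $\Xi_f$ for an arbitrary weakly holomorphic $f$ of weight $\tfrac{1}{2}-j$ (and, more prosaically in the setting above, weight $-2j$ with the relevant Hecke symmetry), in a form whose boundary realizes $Z(f)$ on the nose modulo decomposables. This is essentially the content of Conjecture \ref{conj1}: a motivic Borcherds lift for $j\geq 1$. Theorem \ref{motiviccycle} supplies cycles only for the discriminants $\Delta$ in the Birkenhake--Wilhelm table, and exhibiting an arbitrary principal part of $f$ as an explicit $\Q$-linear combination of these (plus decomposables) is the nontrivial step I would expect to absorb most of the work, and beyond which the regulator identification and the localization argument should proceed along the lines already tested in the explicit $\Delta=5$ computation of Section \ref{z5construction}.
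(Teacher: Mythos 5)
This statement is a \emph{conjecture} attributed to Gross--Kohnen--Zagier and cited from \cite{GKZ}; the paper records it without proof and does not claim to establish it. Your attempt is therefore not being measured against an existing argument in the paper, and you should be explicit that what you have written is a reduction, not a proof. Your strategy faithfully mirrors the paper's own programme: build a motivic cycle $\Xi_f$ whose boundary realizes $Z(f)$, identify its regulator against a distinguished $(j,j)$-form with the Borcherds lift of \cite{BEY} via Zhang's Green's-current description of CM cycles, and then localize at a CM fibre to get a logarithm of an algebraic number via the product $H^{2j+1}_{\M}\otimes H^{2(j-k)}_{\M}\to H^{2j+1}_{\M}(\cdot,\Q(j+1))$ and the mechanism of Theorem \ref{algebraicity}. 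That is precisely the content of Conjectures \ref{conj1}, \ref{conj2}, \ref{conj3} together with Theorem \ref{algebraicity}. But you yourself observe, correctly, that the decisive step --- constructing $\Xi_f$ for arbitrary weakly holomorphic $f$ with boundary $Z(f)$ modulo decomposables --- is exactly Conjecture \ref{conj1}, which the paper proposes and does not prove. So the chain of reasoning is circular at the crucial point, not a proof.

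Beyond that, two further gaps remain even if Conjecture \ref{conj1} were granted. First, Theorem \ref{algebraicity} delivers only that the regulator is $\log|\alpha|$ for \emph{some} algebraic $\alpha$; the GKZ conjecture asserts more, namely that $\alpha$ lies in the specific ring class composite $H_{d_1}\cdot H_{d_2}$ and that the normalization constant is exactly $w_{d_1}w_{d_2}/4$. Neither the field of definition of the intersection points $C_i\cap S_y$ nor the period normalizations in $\eta_y$ are controlled in the paper to the precision required, and your sketch does not address this. Second, your first step quietly reinterprets the two-variable setting of GKZ (over $X\times X$, with the Hecke correspondence $T(m)$) in terms of the one-variable constructions of Theorem \ref{motiviccycle}, which live over a modular curve or over $X_\Delta$; matching the principal part of $f$ to an explicit $\Q$-linear combination of Birkenhake--Wilhelm cycles with boundary exactly $\sum c_f(-m)T(m)$, and verifying that only the discriminants in their table suffice, is nontrivial and is not supplied. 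As written, the proposal is a reasonable conditional strategy consonant with the paper's conjectural framework, not a proof of the statement.
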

In fact the original conjecture of Gross-Zagier was stated in terms of relations among coefficients of modular forms of a certain weight. The link with weakly holomorphic modular forms comes via the work of Borcherds \cite{borc} where he shows that the space of relations among coefficients of  modular forms of weight $k$ is given by the space of weakly holomorphic modular forms of weight $2-k$ with principal part determined by the relation. A relation among coefficients of  modular forms of weight $k$ is a sequence $\{b_m\}$ such that for a modular form $g=\sum a_m q^m$ of weight $k$,
$$\sum b_m a_m=0$$
In this case from $f$ one gets the relation 
$$\sum c_f(-m)a_m=0$$
since for an eigenform form  $g$, $T(m)g=a_m g$.

The conjecture above states that the values of Green's functions at $CM$ points are, up to an explicit algebraic number, the logarithm of an algebraic number. Progress on this conjecture has been made by various people - Gross-Kohnen-Zagier themselves \cite{GKZ}, Zhang \cite{zhan}, Mellit \cite{mell}, Viazovska \cite{viaz}, Zhou \cite{zhou} and most recently Bruinier-Ehlen-Yang \cite{BEY}.

\subsection{Motivic cycles and algebraicity of values of  higher Green's functions.}

In this section we show that the existence of indecomposable motivic cycles implies algebraicity of values of Green's functions. In a different form this idea is due to Zhang \cite{zhan} and  independently  Mellit \cite{mell}.

\subsubsection{CM cycles.}

Let $E \rightarrow X$ be the universal elliptic curve over $X$ and $E_{\eta}$ the generic fibre. Let $A_{\eta}=E_{\eta} \times E_{\eta}$. The dimension of the Neron-Severi group of $A_{\eta}$  is $3$,  generated by $E\times \{0\},\{0\} \times E$ and the diagonal. The dimension of $H^{1,1}(A)$ is $4$. 

Let $\tau$ be an imaginary quadratic number. The elliptic curve  $E_\tau$ has $CM$ by $\ZZ[\sqrt{D}]$ where $D$ is the discriminant of $\tau$. Let   
$$Z_{\tau}=\Gamma_{\sqrt{D}} - E_{\tau} \times \{0\} +D \{0\} \times E_{\tau}$$
where $\Gamma_{\sqrt{D}}$ is the graph of multiplication by $\sqrt{D}$. Let 
$$S_{\tau}=c(Z_{\tau}-\sigma^*(Z_{\tau}))$$ 
where $\sigma(x,y)=(y,x)$ and $c$ is a real number chosen so that $(S_{\tau},S_{\tau})=-1$.
$S_{\tau}$ is called the $CM$ cycle at $\tau$. By construction $S_{\tau}$ is orthogonal to the generic Neron-Severi group. An alternative definition is a multiple of  $(\Gamma_{\sqrt{D}}-\Gamma_{-\sqrt{D}})$. In particular, there is an ellptic curve of degree $D$ on $E \times E$ at a $CM$ point corresponding to $\Q(\sqrt{-D})$.

For a non-cuspidal point $y$  on $X$, let $\omega_y$ be a holomorphic form on $E_y$ such that $\int_{E_y} \omega_y \bar{\omega}_y=1$. There is a constant $k$ {\em independent of $y$} such that if $y$ is a $CM$ point then the cycle class of the CM cycle $S_y$ is given by 
$$cl(S_y)=k (\omega_1 \bar{\omega}_2 + \bar{\omega}_1\omega_2):=\eta_y.$$
However, $\eta_y$ is defined for {\em all} non-cuspidal points $y$.

\subsubsection{Green's Currents and regulators.}

If $\tau$ is a CM point and $S_{\tau}$ the CM cycle, let $g_2(\tau)$ denote the Green's current for $S_{\tau}$ with the following properties \cite{zhan} Section 3.1,

\begin{itemize}
	
	\item $\frac{\partial \bar{\partial}}{\pi i} g_2(\tau) = \delta_{S_{\tau}}$
		\item The integral 
		$$\int g_2(\tau) \eta=0$$
		for any $\frac{\partial \bar{\partial}}{\pi i}$ closed form $\eta$ on $W(\C)$.
		\end{itemize}
Let $\eta_y$ be as above and 
	$$H_2(\tau,y)=\int_{E_y \times E_y} \eta_y g_2(\tau)$$
	Zhang \cite{zhan}, Proposition 3.4.1, shows 
	$$H_2(\tau,y)=\frac{1}{2}G^X_2(\tau,y)$$
	Namely, the Green's current of $S_{\tau}$ evaluated on the form $\eta_y$ is half the value of the Green's function of weight $2$ at the point $(\tau,y)$.

	\begin{rem} In fact one has CM cycles $S_{\tau}^k$ of codimension $k$ in the CM fibres of the compactification of the $2(k-1)^{th}$-self product of the universal elliptic curve. Zhang shows that weight $k$ Green's function are related to the Green's currents of these CM cycles. Precisely, there is a distinguished $(k-1,k-1)$ form $\eta_y^k$ in $E_y^{2k-2}$ such that 
		
		 $$H_k(\tau,y)=\int_{E_y^{2k-2}} \eta_y^k g_k(\tau)=            \frac{1}{2}G^X_k(\tau,y)$$
		
		\end{rem}
	\begin{rem}
		One can also define CM cyles in the case when the base is a Shimura curve \cite{bess} and one would expect a similar expression for the Green's current of these CM cycles in terms of Green's functions.

	\end{rem}

\subsubsection{Motivic cycles and algebraicity.}

The localization sequence in this case is as follows. Let $X$ be a modular curve, $W$, $A_{\eta}$ and $A_{\tau}$ be as before. 

$$\cdots \longrightarrow H^{3}_{\M}(A_{\eta},\Q(2)) \stackrel{\partial}{\longrightarrow} \bigoplus_{\tau \in X} H^{2}_{\M}(A_{\tau},\Q(1)) \longrightarrow H^{4}_{\M} (W,\Q(2)) \rightarrow \cdots$$
When $\tau$ is a CM point, the CM cycles are elements of $H^2_{\M}(A_{\tau},\Q(1))$. A relation of rational equivalence among the CM cycles is a certain element of the motivic cohomology group $H^3_{\M}(A_{\eta},\Q(2))$. The link between motivic cycles and algebraicity comes from the following proposition. 

\begin{prop}
	Let $X$ be a modular curve and $A_{\eta}$ the generic fibre of the universal abelian surface. Let $\Xi$ be a cycle in the group $H^3_{\M}(A_{\eta},\Q(2))$ with boundary $\partial(\Xi)=\sum_{\tau} a_{\tau} S_{\tau}$, where $S_{\tau}$ is the $CM$ cycle in the fibre over a $CM$ point $\tau$. Then 
	\begin{itemize}
		\item If $\eta_y$ is the $(1,1)$ form defined above, 
		$$\langle \reg(\Xi),\eta_y\rangle=\frac{1}{2}\sum a_{\tau} G_2^X(\tau,y)$$
		\item If $y$ is a CM point {\em outside the support} of $\partial(\Xi)$ then
	$$\sum a_{\tau} G_ 2^X(\tau,y)$$ 
	is the logarithm of an algebraic number. 
	\end{itemize}
\end{prop}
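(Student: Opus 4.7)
My plan is to prove the two parts in sequence. The first is a formal consequence of the standard analytic description of the regulator together with Zhang's formula, while the second exploits the fact that at a CM point the class $\eta_y$ becomes algebraic.

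For the first statement, I would begin by recalling that the Beilinson regulator of an element $\Xi = \sum (C_i, f_i) \in H^3_{\M}(A_\eta, \Q(2))$ to real Deligne cohomology is represented on $(1,1)$-forms by the current $\omega \mapsto \frac{1}{2\pi i} \sum_i \int_{C_i} \log|f_i|\,\omega$. The cocycle condition $\sum_i \div(f_i) = 0$ together with the description of the boundary map ensures that, restricted to the fibre $A_y$ via the localization sequence, this current is a Green's current for the fibral boundary $\partial(\Xi) = \sum_\tau a_\tau S_\tau$. Concretely, $\frac{\partial \bar{\partial}}{\pi i}\reg(\Xi) = \sum_\tau a_\tau \delta_{S_\tau}$, so the difference $\reg(\Xi) - \sum_\tau a_\tau g_2(\tau)$ is $\partial\bar{\partial}$-closed and therefore pairs to zero against the $\partial\bar{\partial}$-closed form $\eta_y$. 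Zhang's identity $H_2(\tau, y) = \frac{1}{2} G_2^X(\tau, y)$ then gives
$$\langle \reg(\Xi), \eta_y \rangle = \sum_\tau a_\tau \int g_2(\tau)\,\eta_y = \sum_\tau a_\tau H_2(\tau, y) = \frac{1}{2} \sum_\tau a_\tau G_2^X(\tau, y).$$

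For the second statement, the crucial input is that when $y$ is a CM point, $\eta_y$ coincides with the cycle class $cl(S_y)$ of the CM cycle $S_y$, an algebraic cycle defined over a number field. Since $y$ lies outside the support of $\partial(\Xi)$, the localization sequence gives a well-defined restriction $\Xi_y \in H^3_{\M}(A_y, \Q(2))$. I would then form the cup product
$$\Xi_y \cup [S_y] \in H^5_{\M}(A_y, \Q(3))$$
using $[S_y] \in H^2_{\M}(A_y, \Q(1))$, and push forward along the structure morphism $\pi: A_y \to \Spec(\bar{\Q})$ to obtain an element $\alpha \in H^1_{\M}(\Spec \bar{\Q}, \Q(1)) = \bar{\Q}^\times \otimes \Q$, i.e.\ the class of an algebraic number. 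Because the Beilinson regulator is a ring map and commutes with proper pushforward to real Deligne cohomology, evaluating regulators on both sides yields $\langle \reg(\Xi_y), cl(S_y) \rangle = \log|\alpha|$ up to a harmless rational constant. Combining with Part 1 and the identification $cl(S_y) = \eta_y$ at the CM point $y$ shows that $\sum_\tau a_\tau G_2^X(\tau, y) \in \log|\bar{\Q}^\times|$, as claimed.

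The main obstacle is the analytic identification in the first step: one must match the regulator current of $\Xi$ with the Green's current of $\partial(\Xi)$, up to $\partial\bar{\partial}$-exact error, using precisely the normalization and signs of Zhang's conventions. A secondary subtlety is that the vertical part of $\reg(\Xi)$ on $A_y$ really is the Green's current of the fibral boundary and not of some larger divisor; this uses the cocycle condition to ensure that the horizontal part $\overline{\div_{C_i}(f_i)}$ has zero image on the fibre. Once that identification is in place, the second part is largely formal, resting on compatibility of the regulator with cup products and pushforwards and on the algebraicity of $S_y$ at CM points.
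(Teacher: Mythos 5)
Your proof takes essentially the same approach as the paper: for the first part, both identify the real regulator current with the Green's current of the fibral boundary $\sum a_\tau S_\tau$ and invoke Zhang's identity $H_2(\tau,y) = \tfrac{1}{2}G_2^X(\tau,y)$; for the second, both use that at a CM point $y$ the form $\eta_y$ is the cycle class of $S_y \in H^2_{\M}(A_y,\Q(1))$, reinterpret $\langle \reg(\Xi),\eta_y\rangle$ as the regulator of $\Xi_y \cup [S_y] \in H^5_{\M}(A_y,\Q(3))$, and conclude it is the log of an algebraic number. Your version spells out the $\partial\bar{\partial}$-closedness comparison in Part 1 more explicitly than the paper (which simply cites Soul\'e) and phrases Part 2 in the abstract language of pushforward compatibility where the paper computes the $H^5_{\M}$ element explicitly as $\sum(f_i, C_i\cap S_y)$ with regulator $\log\prod|f_i(C_i\cap S_y)|$, but these are the same argument.
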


\begin{proof}
The real regulator of an  element of $H^{3}_{\M}(A_{\eta},\Q(2))$ is a current on  $(1,1)$-forms in the family. From \cite{soul} Chapter III, Theorem 1, for instance,  it can also be obtained as a linear combination of the Green's currents associated to the cycles appearing in the boundary. From Zhang's theorem, therefore, the regulator evaluated on the $(1,1)$ form $\eta_y$ is a linear combination of  higher Green's functions. Precisely if $\Xi$ is a motivic cycle in $H^3_{\M}(A_{\eta},\Q(2))$ with 
$$\partial(\Xi)=\sum a_{\tau} S_{\tau}$$
then, if $y$ is not in the support of the boundary, one has 
$$\langle \reg(\Xi),\eta_{y} \rangle = \frac{1}{2}\sum a_{\tau} G^X_2(\tau,y).$$

Recall  that if $y$ is a $CM$ point,  one has the CM cycle $S_y$ in $H^2_{\M}(A_y,\Q(1))$ with cycle class $\eta_{y}$. Further, if  $y$ is  outside the support of the boundary of the motivic cycle, the cycle $\Xi$ restricts to give a cycle $\Xi|_y=\Xi_y$ in the motivic cohomology group $H^3_{\M}(A_y,\Q(2))$. The regulator of the cycle $\Xi$ evaluated at $\eta_{y}$ can then be viewed as the regulator of the product of the cycles $\Xi_y$ and $S_{y}$ in the motivic cohomology groups of $A_y$: 
$$H^3_{\M}(A_{y},\Q(2)) \otimes H^2_{\M}(A_{y},\Q(1)) \longrightarrow H^5_{\M}(A_{y},\Q(3)).$$
Elements of the group $H^5_{\M}(A_y\Q(3))$ are of the form $\sum_z (z,a_z)$ where $z$ is a point defined over possibly an extension $k_z$ of $K$  and $a_z \in K_1(z)=k_z^*$. The regulator of such an element is $\log \prod |a_z|$.

Explicitly, if $\Xi_{y}=\sum (C_i,f_i)$ then $\Xi_{y} \cap S_{y}=\sum (f_i, C_i \cap S_{y})$. The regulator of this is 
$$\log \prod  \left|f_i(C_i \cap S_{y}) \right|$$
where $f_i$ evaluated at the divisor $C_i \cap S_y=\sum a_P P$ means $\prod_P f_i(P)^{a_P}$. 

Hence at a $CM$ point $y$ we have two expressions for the regulator -- one in terms of a special value  of a higher Green's function and the other in terms of the regulator of an element of  $H^5_{\M}(A_{y},\Q(3))$. This gives 
$$(\reg(\Xi_{y}),\eta_{y})=\frac{1}{2}\sum a_{\tau} G^X_2(\tau,y))=\log  \prod \left| f_i(C_i \cap S_{y})\right|$$
The element $\Xi_y$  and the cycle $S_y$ are defined over a number fields. Therefore  the number 
$$ \alpha = \prod f_i(C_i \cap S_y)$$
is the value of an algebraic function evaluated at algebraic points and is hence an algebraic number.  

\end{proof}

\begin{rem} While we have stated things for $G_2$ the proposition holds for $G_k$ for any $k \geq 1$. Namely existence of a motivic cycle implies algebraicity of values of a certain linear combination of Green's functions. Since we construct cycles in the case  $k=2$ we have stated the proposition only in that case. 
	
	\end{rem}

\begin{rem} A trivial example of this is when $k=1$. Here a relation of rational equivalence between $CM$ points is a function whose divisor is supported on the $CM$ points. If $f$ is a function with that divisor it  can be defined over a number field.  If $\div(f)=\sum a_{\tau} z_{\tau}$ then one has 
	$$\frac{1}{2}\sum a_{\tau} G^X_1(z_{\tau},y)=\log|f(y)|.$$
	If $y$ is a $CM$ point then clearly $f(y)$ is an algebraic number and hence the linear combination of the Green's functions of weight $1$ are logarithms of algebraic numbers  at $CM$ points.  
	\end{rem}

If $\Xi$ is any element of $H^3_{\M}(A_{\eta},\Q(2))$ then its regulator is a current on $(1,1)$ forms. If the base is a modular or Shimura curve, three of the $(1,1)$ forms are represented by algebraic cycles in the generic fibre itself. If $\eta_C$ is a form representing a cycle $C$ in the generic Neron-Severi then from the above argument, $\Xi_y \cap C_y$ is an element of $H^5_{\M}(A_y,\Q(3))$ and 
$$\langle \reg(\Xi),\eta_{C_y}\rangle = \reg (\Xi_y \cap C_y)= \log|F(y)|$$
for some function $F$ on the base $X$. This will have divisor at those points where $\Xi$ has a boundary. This is a linear combination of the form $\sum_{\tau} \frac{1}{2} G_1(\tau,y)$ for some $\tau$.  Hence the {\em matrix coefficients} of 
 the regulator are given by Green's functions of degrees $1$ and $2$.
 
 In general, the matrix coefficients of the regulator of an element  in $H^{2k-1}_{\M}(A_{\eta}^{2k-2},\Q(k))$ are given by Green's functions of degree $\leq k$. Note further that in the case $k=2$ for instance, for decomposable cycles the $G_2$ component is $0$. Hence only {\em indecomposable} elements give algebraicity results for higher Green's functions. In general for $k>2$ there are products of the form, for every finite extension $L$ of the base field $K(\eta)$,  
 $$D_L=\bigoplus_{0<l<k} H^{2l-1}_{\M}(A_L^{2k-2},\Q(l)) \otimes H^{2k-2l}_{\ M}(A_L^{2k-2},\Q(k-l)) \longrightarrow H^{2k-1}_{\M}(A_L^{2k-2},\Q(k))$$
Let 
$$H^{2k-1}_{\M}(A_{\eta}^{2k-2},\Q(k))_{decomp}=\bigoplus_{L/K(\eta)} Nm^L_{K(\eta)}(D_L)$$
and let $$H_{\M}(A^{2k-2}_{\eta},\Q(k))_{indecom}=H_{\M}(A^{2k-2}_{\eta},\Q(k))/H_{\M}(A^{2k-2}_{\eta},\Q(k))_{decomp}$$
its only cyles which are non-zero in this group that provide algebraicity relations for $G_k$. 

\begin{rem}
	We have stated things for $A$ but essentially the same statements hold for $\TK_A$ as the underlying motive is the same. So one can equally well work with $\TK_A$. We will do this in the example later.  
\end{rem}

\subsubsection{The cycles $\Xi_{\Delta}^c$.}

Consider the cycles $\Xi_{\Delta}^c$ restricted to the universal family over $X$. Alternately, the construction can be carried out on the universal Abelian surface $A=E \times E$ over $X$. $\Xi_{\Delta}^c$ will have a boundary at those points where $E \times E$ has multiplication by $\Q(\sqrt{\Delta})$. At such a point $\tau$  the rank of the Neron-Severi is $4$ and so by a theorem of Shioda and Mitani, the Abelian surface is $E_{\tau} \times E_{\tau}$, where $E_{\tau}$ is an elliptic curve with complex multiplication. In particular, one has a $CM$ cycle at this point. 

Using that we have the following theorem:

\begin{thm} Let $\Xi_{\Delta}^c=\sum (\TC,f_P)+(E_P,g_P)$ be the cycle constructed above with boundary on points $\tau$ on $H_{\Delta}^c \cap X$. Let $y$ be a $CM$ point lying outside the support of $H_{\Delta}^c \cap X$. Then 

	$$\langle \reg(  \Xi^c_{\Delta}),\eta_y  \rangle=\sum a_{\tau} G_2^X(\tau,y)=\log \left( \prod_{x \in C_P|_{y} \cap S_y} |f_P(x)|^{\ord_{C_P|_y \cap S_y}(x)} \prod_{x \in E_P|_{y} \cap S_y} |g_P(x)|^{\ord_{C_P|_y \cap S_y}(x)} \right)$$
	In particular, since the points $C_P \cap S_y$ and $E_P \cap S_y$  are algebraic and the functions $f_P$ and $g_P$  are defined over ${\bar \Q}$ this is the logarithm of an algebraic number.
	\label{algebraicity}
	\end{thm}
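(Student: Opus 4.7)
The plan is to derive the theorem by combining the preceding proposition with an explicit cup-product computation in the CM fibre $A_y$; both sides of the double equality compute the same archimedean regulator, once from the Green's-current side and once from the $K$-theoretic side.

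First I would pin down the boundary of $Z_\Delta^c$ restricted to the universal family over $X$. Theorem~\ref{boundary} gives $\partial(Z_\Delta^c) = a(\BC_H^1 - \BC_H^2)$ with $a \neq 0$ at each component $H = H_\Delta^c$. Restricted to the modular curve $X$, this divisor is supported on CM points $\tau$, where by Shioda--Mitani the surface $A_\tau$ is the self-product of a CM elliptic curve, so the Picard rank jumps from $3$ to $4$. Since $\BC_H^1 - \BC_H^2$ is orthogonal to the generic N\'eron--Severi, Lemma~\ref{decomp} lets me subtract decomposable corrections $(D_\eta^i, g^i)$ until the remaining boundary is supported purely on cycles orthogonal to $NS(A_\eta)$; such a cycle at a CM point is forced to be a rational multiple of the CM cycle $S_\tau$. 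The subtracted decomposables contribute, via the remark for $k=1$, only logarithms of algebraic function values at $y$, which are absorbed into the right-hand side of the claimed identity. We may therefore assume $\partial(Z_\Delta^c) = \sum_\tau a_\tau S_\tau$.

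Second, applying the preceding proposition to this cycle yields the first equality
$$\langle \reg(Z_\Delta^c), \eta_y \rangle = \sum_\tau a_\tau G_2^X(\tau, y),$$
since by Zhang's formula the Green's current of $S_\tau$ pairs with $\eta_y$ to give $\tfrac{1}{2} G_2^X(\tau, y)$, and the archimedean regulator of a class in $H^3_{\M}(A_\eta, \Q(2))$ is a linear combination of the Green's currents attached to its boundary components. For the second equality I would pass to the CM fibre $A_y$. Because $y$ lies outside the support of $\partial Z_\Delta^c$, the class restricts to $(Z_\Delta^c)_y = (C_P|_y, f_P|_y) + (E_P|_y, g_P|_y)$ in $H^3_{\M}(A_y, \Q(2))$, and by Shioda--Mitani the form $\eta_y$ is the cycle class of the algebraic CM cycle $S_y$. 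Compatibility of the Beilinson regulator with restriction to a smooth fibre then identifies $\langle \reg(Z_\Delta^c), \eta_y \rangle$ with the regulator of the cup product $(Z_\Delta^c)_y \cup S_y$ in $H^5_{\M}(A_y, \Q(3))$, which is the sum of Milnor-type symbols
$$(f_P|_y, C_P|_y \cap S_y) + (g_P|_y, E_P|_y \cap S_y),$$
whose regulator is $\log|\cdot|$ of the indicated product, weighted by local intersection multiplicities. Since $C_P, E_P, f_P, g_P$ are all defined over $\bar\Q$ and $S_y$ is algebraic at the CM point $y$, every intersection point and every function value taken there is algebraic, yielding the algebraicity conclusion.

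The principal obstacle is precisely the bridge between the two sides: one must verify that the analytic regulator on the universal family is compatible with restriction to the fibre $A_y$, and that the Hodge-theoretic pairing against $\eta_y$ coincides, at the CM point, with the algebraic cup product against $S_y$ at the level of Beilinson regulators. A secondary difficulty is bookkeeping: one must control the decomposable corrections introduced in the first step so that the $G_1$-type terms from the $(D_\eta^i, g^i)$ exactly account for the difference between $\partial(Z_\Delta^c)$ as produced by Theorem~\ref{boundary} and its canonical form $\sum_\tau a_\tau S_\tau$, and so that the resulting log-algebraic factors are absorbed into the algebraic number appearing on the right-hand side.
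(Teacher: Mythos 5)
Your proof follows the same overall route as the paper: compute the boundary via Theorem~\ref{boundary}, clean it up to $\sum_\tau a_\tau S_\tau$ via Lemma~\ref{decomp}, apply the proposition relating regulators to Green's functions, and then compute the regulator in the CM fibre $A_y$ as the cup product with $S_y$. However, there is a genuine slip in the way you handle the decomposable corrections, and as stated your argument does not give the \emph{explicit} right-hand side $\log\bigl(\prod_{x\in C_P|_y\cap S_y}|f_P(x)|^{\ord}\,\prod_{x\in E_P|_y\cap S_y}|g_P(x)|^{\ord}\bigr)$ with the \emph{original} functions $f_P,g_P$.

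You claim the subtracted decomposables ``contribute logarithms of algebraic function values at $y$, which are absorbed into the right-hand side.'' That cannot be what happens: the right-hand side is a concrete product of values of $f_P$ and $g_P$ on the original curves $C_P$ and $E_P$, and there is no place in that formula for additional factors coming from the corrections $(D_\eta^i,g^i)$. If those corrections contributed nonzero $\log$-terms, the chain of equalities in the theorem would break. The actual reason the correction terms drop out is sharper: the cycles $D_\eta^i$ lie in the generic N\'eron--Severi group, and the CM cycle $S_y$ (equivalently, the form $\eta_y$) is by construction orthogonal to every $D_\eta^i|_y$. Hence $(D_\eta^i|_y\cdot S_y)=0$, so each correction $(D_\eta^i,g^i)$ pairs to \emph{zero} with $\eta_y$ and to \emph{zero} with $S_y$ under the cup product. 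Consequently both $\langle\reg(\Xi_\Delta^c),\eta_y\rangle=\langle\reg(\tilde\Xi_\Delta^c),\eta_y\rangle$ and $\reg(\tilde\Xi_\Delta^c\cap S_y)=\reg(\Xi_\Delta^c\cap S_y)$, which is exactly what lets one write the last member of the equality using the original $(C_P,f_P)+(E_P,g_P)$ rather than the corrected cycle. Without this vanishing you would only obtain the weaker statement that $\sum a_\tau G_2^X(\tau,y)$ is a logarithm of an algebraic number, not equality with the displayed product; the rest of your argument is sound.
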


\begin{proof} Let  $H=H_{\Delta}^c$. From Theorem \ref{boundary}, the boundary of the element $\Xi^c_{\Delta}$ is of the form $a(C^1_H-C^2_{H})$ where $C^i$ are the new cycles in the fibre over $H$ and $a \neq 0$. Restricting this to $X$ shows that the boundary is a sum of the form $\sum a(C^1_{\tau}-C^2_{\tau})$ where $\tau$ runs through the points if intersection of $H$ with $X$. 
	
Using Lemma \ref{decomp},  we can modify $\Xi_{\Delta}^c$ by decomposable elements to get a cycle $\tilde{\Xi}_{\Delta}^c$ with boundary
$$\partial (\tilde{\Xi}^c_{\Delta})=\sum_{\tau} a_{\tau} S_{\tau}=0 \text { in } CH^2_{\hom}(W)$$
where $\tau$ runs through the points of  $H^c_{\Delta} \cap X$ and $S_{\tau}$ is the $CM$ cycle in the fibre  at $\tau$.

Let $\tilde{\Xi}^c_{\Delta}=\sum (C_i,f_i)$. From the remarks above, at a $CM$ point $y$ outside the support of the boundary of $\tilde{\Xi}_{\Delta}^c$ we have 
$$\langle \reg(  \tilde{\Xi}^c_{\Delta}),\eta_y  \rangle=\sum a_{\tau} G_2^X(\tau,y)
=\reg(\tilde{\Xi}_{\Delta}^c \cap S_y)$$
$$=\reg(\Xi_{\Delta}^c \cap S_y)=\log \left(\prod_i \prod_{x \in C_i|_{y} \cap S_y} |f_i(x)|^{\ord_{C_i|_y \cap S_y}(x)}\right)$$
which is the logarithm of an algebraic number.

The difference between $\Xi^c_{\Delta}$ and $\tilde{\Xi}^c_{\Delta}$ consists of decomposable cycles and for a decomposable cycle the regulator computed againsts $\eta_y$  is $0$, so for the purpose of computing the algebraic number, we can simply use the cycle $\Xi^c_{\Delta}$.  That is $\langle \reg(  \Xi^c_{\Delta}),\eta_y  \rangle=\langle \reg(  \tilde{\Xi}^c_{\Delta}),\eta_y  \rangle$ and we have 

$$\langle \reg(  \Xi^c_{\Delta}),\eta_y  \rangle=\sum a_{\tau} G_2^X(\tau,y)=\log  \left(\prod_{x \in C_P|_{y} \cap S_y} |f_P(x)|^{\ord_{C_P|_y \cap S_y}(x)} \prod_{x \in E_P|_{y} \cap S_y} |g_P(x)|^{\ord_{C_P|_y \cap S_y}(x)} \right) $$

\end{proof}

\subsubsection{Motivic cycles and the conjecture of Gross, Kohnen and Zagier.}

The link between the algebraicity results above and the conjecture of Gross-Kohnen and Zagier comes from the following. 

Recall that a {\em Heegner cycle} is a sum of $CM$ cycles of the same discriminant. 
In their seminal paper and the sequel with Kohnen, Gross and Zagier also conjectured (and proved in the case of points) that Heegner cycles give rise to coefficients of modular forms of a certain weight $k$. Specifically certain generating series whose coefficients are given by the height pairings of Heegner cycles are modular forms. Assuming that the height pairing is a perfect pairing it would imply that {\em a relation among coefficients of modular forms of a certain weight is equivalent to a relation of rational equivalence among Heegner cycles.}

On the other hand from what we have discussed above, the localization sequence implies that {\em  a relation of rational equivalence among the Heegner cycles is a motivic cycle in the generic fibre}. Further, as remarked earlier, a {\em relation among coefficients of modular forms} is a weakly holomorphic modular form. Hence this suggests there should be a link between motivic cycles on the one hand and weakly holomorphic modular forms on the other. 

Zemel \cite{zeme} shows that in some cases there are certain relations between Heegner cycles that can be realised as coming from modular forms - but these relations are {\em not} relations of {\em rational equivalence} - so cannot be used to obtain algebraicity relations as above. 

The link between these two comes via Green's functions. Precisely, let $A_{\eta}$ denote the generic fibre of the  universal Abelian surface over $X \times X$ which is  product of elliptic curves. In the fibre over a CM point $z$ one has a codimension $j$ cycle $S_z^j$ in $H^{2j}_{\M}(A_z^j,\Q(j)$. 

As before, let $\omega_z$ be a $1$ form on $E_z$ such that $\int_{E_z} \omega_z \bar{\omega}_z=1$. For $I \subset [1,2j]$ with $|I|=j$, let $\epsilon_I:[1,2j] \longrightarrow \{1,-1\}$ be defined by $\epsilon_I(k)=1 \Leftrightarrow k \in I$. Let $\omega_{z}^{-1}=\bar{\omega}_z$. Define 
$$\Omega^j_z=c_{2j}\sum_I \omega_{z_1}^{\epsilon_I(1)}\cdots \omega_{z_{2j}}^{\epsilon_I(2j)}$$
where $c_{2j}$ is such that at a $CM$ point $z$  this form is the class of the $CM$ cycle $S_z^j$.  Then we can conjecture	

\begin{conj} \label{conj1} Given a weakly holomorphic form $f$ of weight $-2j$  with integral principal part, then there is a motivic cycle $\Xi^j_f$ in $H^{2j+1}_{\M}(A_{\eta}^{j},\Q(j+1))$ where $\eta$ is the generic point of $X \times X$, such that its boundary supported on $Z(f)=\sum c_f(-m)T_m$.  Further, if $z_1$ is a $CM$ point  and $\Omega^j_z$ is the $(j,j)$ form in the fibres over $z_1 \times X$, one has 
$$\langle \reg(\Xi^j_f),\Omega^j_z \rangle = \frac{1}{2}G_{j+1,f}(z_1,z)$$
\end{conj}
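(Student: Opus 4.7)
The plan is to split the conjecture into two essentially independent pieces: (i) constructing a motivic cycle $\Xi^j_f \in H^{2j+1}_{\M}(A^j_\eta,\Q(j+1))$ whose boundary realizes the Heegner divisor combination prescribed by the principal part of $f$, and (ii) computing the regulator pairing with $\Omega^j_z$ via the relation between Green currents of $CM$ cycles and higher Green's functions established by Zhang.

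For step (i), the data $\{c_f(-m)\}$ determines the divisor $Z(f)=\sum_m c_f(-m) T_m$ on $X \times X$, and via the Hecke action this can be rewritten as a sum of $CM$ cycles of appropriate discriminants in the fibres. I would look for the motivic cycle as a $\Q$-linear combination of building-block cycles. For $j=1$, the cycles $\Xi^c_\Delta$ of Theorem \ref{motiviccycle} have boundaries supported on components of Humbert surfaces, which upon restriction to $X \times X$ become supported on the Hecke correspondences appearing in $Z(f)$ by the Humbert-norm computation in the text. For general $j$, one needs analogous cycles in $H^{2j+1}_{\M}(A^j_\eta,\Q(j+1))$; the natural candidates come from deforming rational curves inside $2j$-dimensional analogues of the Kummer plane using a higher-dimensional enumerative input replacing Theorem \ref{enumgeom}. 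One then tries to match $\Q$-coefficients so that the combined boundary equals a multiple of the $CM$-cycle decomposition of $Z(f)$, modulo decomposable corrections removed via Lemma \ref{decomp}.

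For step (ii), once $\Xi^j_f$ is constructed, its regulator on $(j,j)$-forms is a current expressible as a sum of Green currents of the cycles in its boundary, as in the proof of Theorem \ref{algebraicity}. Zhang's formula in the form $H_{j+1}(\tau,z)=\tfrac{1}{2} G^X_{j+1}(\tau,z)$, applied term by term, would then give
\begin{equation*}
\langle \reg(\Xi^j_f),\Omega^j_z\rangle \;=\; \frac{1}{2}\sum_\tau a_\tau\, G^X_{j+1}(\tau,z),
\end{equation*}
where $a_\tau$ are the boundary multiplicities of $\Xi^j_f$. Matching this against $G_{j+1,f}(z_1,z)=\sum_{m>0} c_f(-m) m^j G^m_{j+1}(z_1,z)$ when the second variable is restricted to the fibre $\{z_1\}\times X$ reduces to a combinatorial identity between the Hecke action on Green's functions and the sum of $CM$-point contributions on each Hecke translate, together with tracking the normalization of $\Omega^j_z$ relative to the cycle class of $S^j_z$.

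The main obstacle is step (i): even for $j=1$, the cycles $\Xi^c_\Delta$ produced by our method are indexed by the Birkenhake--Wilhelm discriminants, and it is not automatic that every principal part $\{c_f(-m)\}$ coming from a weakly holomorphic form is realized by an integer combination of their boundaries. What is truly needed is an exactness statement for the localization sequence restricted to the modular subcomplex of Ramakrishnan, parallel to the exact sequence $0 \to M^!_{k,\rho_L}(\Gamma') \to H_{k,\rho_L}(\Gamma') \stackrel{\xi_k}{\to} S_{2-k,\bar{\rho}_L}(\Gamma') \to 0$, with the Borcherds lift $\Phi_j(f,\tau)$ providing the bridge between the two sequences. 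Establishing this correspondence in general, and in particular producing higher-dimensional analogues of Theorem \ref{motiviccycle} for $j \geq 2$, is where the genuinely conjectural content lies; step (ii) is then essentially a formal consequence of Zhang's theorem.
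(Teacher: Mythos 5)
The statement you have been asked to prove is, in the paper, explicitly a \emph{conjecture} (Conjecture~\ref{conj1}); the paper offers no proof of it, only heuristic evidence and a formal framework. You have correctly recognised this. Your ``proof proposal'' is therefore best read as an analysis of what a proof would require together with an honest identification of the genuinely open step, and on that reading it is accurate and tracks the paper's own discussion closely.

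Specifically, your split into (i) constructing $\Xi^j_f$ with boundary matching $Z(f)$ and (ii) computing the regulator via Zhang's Green-current formula is exactly the decomposition the paper adopts implicitly: for step (ii) the paper invokes Zhang's $H_k(\tau,y)=\tfrac{1}{2}G^X_k(\tau,y)$ term by term, precisely as in the proof of Theorem~\ref{algebraicity}, and the only additional content needed is bookkeeping of Hecke translates and the normalisation of $\Omega^j_z$ against $cl(S^j_z)$. Your judgment that (ii) is ``essentially formal given Zhang'' is consistent with how the paper handles the $j=1$ case. Your identification of (i) as the real gap --- that it is unclear whether every admissible principal part $\{c_f(-m)\}$ is realised by a boundary of cycles $\Xi^c_\Delta$ (or their higher-$j$ analogues, which would require an enumerative input beyond Theorem~\ref{enumgeom}) --- is exactly where the paper itself places the conjectural weight; it only \emph{constructs} the cycles indexed by Birkenhake--Wilhelm discriminants, and invokes Borcherds as proof only in the degenerate case $j=0$. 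Your closing observation that what is really wanted is a comparison of the localization sequence (restricted to the modular subcomplex) with the sequence $0\to M^!_{\frac{1}{2}-j,\rho_L}\to H_{\frac{1}{2}-j,\rho_L}\to S_{\frac{3}{2}+j,\bar\rho_L}\to 0$, bridged by Borcherds lifts, is precisely the diagram the paper draws in the ``modular complex'' section. There is no gap in your reasoning; there is simply no proof to give, and you have said so.

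One small point worth sharpening: in matching $\frac{1}{2}\sum_\tau a_\tau G^X_{j+1}(\tau,z)$ against $\sum_m c_f(-m)\,m^j\,G^m_{j+1}(z_1,z)$, the passage from a sum over $CM$ points in the support of $T_m$ to the Hecke-translated Green's function $G^m_{j+1}$ is not entirely automatic: the boundary multiplicities $a_\tau$ need to aggregate, over the points of $T_m\cap(\{z_1\}\times X)$, to $c_f(-m)m^j$ with the right weights. The paper side-steps this by restricting its construction to the discriminant families where the boundary is a definite linear combination $a(C^1_H-C^2_H)$ and the Hecke decomposition of $H_\Delta\cap H_1$ is explicit; for general $f$ this aggregation is exactly the combinatorial identity you flag and would have to be established (or assumed as part of the conjecture's normalisation).
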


\subsubsection{An algebraicity result.}

The cycles $\Xi_{\Delta}$ we constructed are defined over the Siegel modular threefold. When restricted to $X \times X$ they are cycles which have boundary supported on components of graphs of Hecke correspondences. 
$$H_{\Delta} \cap H_1 = \bigcup_{\substack{ x \in \ZZ_{\geq 0} \\ x \equiv \Delta \mod 4}}   T\left(\frac{\Delta-x^2}{4}\right)$$
If $z_1$ is a $CM$ point, then restricting this cycle to $z_1 \times X$ gives a cycle which has boundary supported on points of intersection of the Hecke correspondences with $z_1 \times X$. These are points of the form $(z_1,z_2)$ where $E_{z_1}$ is isogenous to $E_{z_2}$ by an isogeny of some degree. The boundary of cycle can be expressed in terms of cycles whose class is $\Omega_{z_1,z_2}$ 

Note that while the Picard number of the generic fibre over $z_1 \times X$ is $2$ it jumps by $2$ over points $(z_1,z)$ where $E_z$ has $CM$ by the same field as $E_{z_1}$. 

If $z$ is such a point which does not lie in the support of the boundary then the algebraicity argument Theorem \ref{algebraicity} implies that the value of the Green's function determined by the regulator of $\Xi_{\Delta}$ is the logarithm of an algebraic number. 

Combining this with the conjecture above would imply special cases of the Gross-Zagier conjecture when $z_1$ and $z_2$ have the same $CM$ field.

\subsubsection{Other algebraicity results.}

The results of the previous section are of a slightly different nature. Above we restrict our cycles to $z_1 \times X$, where the rank of the generic Neron-Severi of the universal family is $2$ and there are $CM$ cycles only at points $z$ where $E_z$ is isogenous to $E_{z_1}$. In the earlier section we have considered the cycles on the universal Abelian surface over a modular curve, where the rank of the generic Neron-Severi is $3$ and one has $CM$ cycles at every $CM$ point. The algebraicity results then hold for all $CM$ points.

To put these results in the context of the conjectures of Gross-Kohnen-Zagier and the work of Brunier-Ehlen-Yang we have to use some other work of \cite{BEY} on a one variable Green's function. In some case, for instance in the work of Mellit \cite{mell}, this can be used to prove special cases of the Gross-Kohnen-Zagier theorem.

Bruinier-Ehlen-Yang and others prove precise algebraicity results in a different manner.  They show that the one variable Green's function  $-2m^jG_{1+2j}(z(m,\mu),z)$ obtained by evaluating the two variable function at a Heegner divisor $z(m,\mu)$ is the lift of a weak Maass form $f_{m,\mu} \in H_{\frac{1}{2}-2j,\rho_L}$. They then have a computation of the value of this at a $CM$ point $z$ and show that when one considers a linear combination of such $f_{m,\mu}$ which is a weakly holomorphic modular form, the value is the logarithm of an algebraic number. 

However, the argument does not work for the cases of $G_{s,f}$ when $s$ is an even integer. The problem is that the sum of the corresponding Heegner cyles is $0$ as $z(m,\mu)=P_{m,r}+P_{m,-r}$ 
where $P_{m,r}$ is the Heegner divisor considered by Gross-Kohnen-Zagier. For even $s$ the Heegner cycles over $P_{m,r}$ and $P_{m-r}$ have opposite signs so they cycles cancel out. 

They resolve this issue by considering `twisted' Heegner cycles  one a new moduli space $X_{\Delta}$ which has two components. They then  show \cite{BEY} Theorem 7.11, that the Green's function evaluated at a twisted Heegner point in one varible is a Borcherds lift of a weakly holomorphic form of weight $\frac{1}{2}-j$. 

Precisely, they have the following theorem 

\begin{thm}[Bruinier-Ehlen-Yang, Theorem 7.11] Let $j\in \ZZ_{\geq 0}$. Let $d$ and $\Delta$ be fundamental discriminants with $(-1)^jd<0$ and $(-1)^j\Delta>0$. Let $m=|d|/4$. For $f$ in $H_{-2j}$
$$G_{j+1,f}(z_{\Delta}(m,h),(z,h))=-2^{j-1} \tilde{\Phi}_{\Delta}^j(z,h,Za^j_{d}(f))$$
where $\Delta$ is the `twisting' factor, $h$ determines the component of the modular curve, $z_{\Delta}(m,h)$ is a twisted Heegner divisor, $\tilde{\Phi}$ is a certain lift which depends on whether $j$ is even or odd. Finally 
$$Za^j_{d}:H_{-2j} \longrightarrow H_{\frac{1}{2}-j,\rho_L}$$
is the Zagier lift from weak Maass forms of weight $-2j$ to those of weight $\frac{1}{2}-j$ and representation $\rho_L$. 

\end{thm}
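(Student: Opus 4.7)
The plan is to realize both sides of the claimed identity as values of (regularized) theta lifts and match them via a seesaw identity.  The two-variable higher Green's function $G_{j+1,f}(z_1,z_2)$ can be expressed as a regularized theta lift of $f \in H_{-2j}$ to a function on $X \times X$, using a Siegel-type theta kernel $\Theta(\tau,z_1,z_2)$ attached to the signature $(2,2)$ lattice $M_2(\ZZ)$ equipped with the determinant form; the $m$-th Fourier coefficient of this kernel encodes the Hecke correspondence $T_m$, so that pairing the kernel against the principal part $\sum c_f(-m)q^{-m}$ of $f$ and regularizing recovers $G_{j+1,f}$ by construction.

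Next I would evaluate along the twisted Heegner divisor $z_\Delta(m,h)$ in the first variable.  Restricting the four-variable theta kernel to the CM cycle $z_\Delta(m,h)$ corresponds, on the orthogonal side, to choosing a negative definite plane inside the quadratic space and decomposing $O(2,2) \supset O(2,0) \times O(0,2)$.  A seesaw identity for the dual pair $\bigl(O(2,2),\,SL_2\bigr)$ then rewrites this restricted lift as the pairing of a lower-rank theta kernel in the remaining variable $(z,h)$ against a modular form of different weight and representation.  The genus character $\chi_\Delta$ attached to the twist multiplies the kernel by a quadratic character in the Fourier variable, and this character is precisely what intertwines weight $-2j$ with $\rho$ and weight $\tfrac12-j$ with $\rho_L$ -- that is, it is what the Zagier lift $Za^j_d \colon H_{-2j} \to H_{\frac12-j,\rho_L}$ performs on the input side.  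What remains is a one-variable regularized theta integral in $(z,h)$, which is by definition $\tilde\Phi_\Delta^j(z,h,Za^j_d(f))$, and comparing normalizations pins down the constant $-2^{j-1}$.

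The hard part is twofold.  First, the Green's function $G_{j+1,f}$ and the theta integral both diverge in the naive sense, so Borcherds--Harvey--Moore regularization is forced, and one must carefully justify interchanging the regularized integral with the pointwise evaluation along $z_\Delta(m,h)$; the standard tool is to unfold against $f$ using its Fourier expansion and analytically continue in an auxiliary spectral parameter $s$ before taking a residue or constant term.  Second, the twisting by $\Delta$ is not cosmetic: for even $j$ the untwisted Heegner cycles $P_{m,r}+P_{m,-r}$ cancel because the cycles over $P_{m,r}$ and $P_{m,-r}$ have opposite signs, so the two-component moduli $X_\Delta$ and the compatibility of $\chi_\Delta$ with the Weil representation $\rho_L$ under $Za^j_d$ are essential to producing a nonzero identity.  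I expect the bookkeeping for the genus character under the metaplectic action, together with the regularization of the seesaw, to be the main technical obstacle; once these are in place, matching Fourier expansions on both sides will give the precise constant and complete the proof.
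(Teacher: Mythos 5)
The paper does not prove this statement at all: it is quoted verbatim as Theorem~7.11 of Bruinier--Ehlen--Yang \cite{BEY} and used as a black box, so there is no ``paper's own proof'' to compare against. Your sketch is a reasonable outline of the \emph{kind} of argument that lives in the \cite{BEY} / Bruinier--Yang literature (regularized Borcherds--Harvey--Moore theta lifts, seesaw dual pairs, genus-character twisting, unfolding against the principal part). That is broadly the correct toolbox.

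There is, however, a concrete flaw in the key seesaw step you describe. You propose decomposing the signature $(2,2)$ space attached to $X \times X$ as $O(2,2) \supset O(2,0) \times O(0,2)$ after restricting to the CM divisor. That decomposition corresponds to evaluating \emph{both} arguments at CM points and would leave no residual theta integral in $(z,h)$ at all. In the identity being proved only the first argument is specialized to the twisted Heegner divisor $z_\Delta(m,h)$; the negative norm vector generating that CM cycle splits off a rank-one definite sublattice, and the orthogonal complement must have signature $(2,1)$ --- exactly as the paper's definition of $L$ (``an even lattice of type $(2,1)$'') and the appearance of $\rho_L$ in the target $H_{\frac12 - j,\rho_L}$ of $Za^j_d$ require. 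The correct seesaw is therefore of the shape $O(2,2) \supset O(2,1) \times O(0,1)$ (with dual pair $SL_2 \times SL_2 \subset Sp_4$), and the Zagier lift $Za^j_d$ is precisely the effect of integrating against the rank-one theta series attached to the definite piece, twisted by the genus character $\chi_\Delta$. As written, your $O(2,0)\times O(0,2)$ step would break the derivation of the one-variable lift $\tilde\Phi_\Delta^j(z,h,\cdot)$, and would also make the weight bookkeeping ($-2j \rightsquigarrow \tfrac12-j$ rather than $\rightsquigarrow$ a weight-$0$ or weight-$1$ object) come out wrong. The rest of the difficulties you flag --- justifying regularization under restriction to a singular divisor, and the role of the two-component moduli and $\chi_\Delta$ in avoiding the cancellation of untwisted Heegner cycles for even $j$ --- are genuine and are indeed the technical core of the \cite{BEY} proof.
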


Brunier-Ehlen-Yang \cite{BEY}(7.10) in fact show that the Green's function $G_{1+j}$  evaluated at a twisted Heegner point can be realised as a lift of a weak Maass for $f_m$ of weight $\frac{1}{2}-j$ and repesentation $\rho_L$. 
$$\tilde{\Phi}_ {\Delta}^j(z,h,f_m)=4m^{\frac{j}{2}}G_{1+j}(z_{\Delta}(m,h),(z,h))$$

The construction of the motivic cycles in Theorem \ref{boundary} can be carried out over the universal family over  this new moduli space $X_{\Delta}$. Given $\Delta_1$ we can construct a motivic cycle $\tilde{\Xi}_{\Delta_1}^c$ in $H^3_{\M}(A_{\eta},\Q(2))$ where $\eta$ is the generic point of $X_{\Delta}$, such that it has a boundary at certain twisted Heegner points determined by $\Delta_1$. Our results in the previous section would imply that the regulator of this cycle restricted to other $CM$ points is the logarithm of an algebraic number. Hence the values of a certain linear combination of Green's functions are logarithms of algebraic numbers. Once again, the weak modular forms give relations between {\em Heegner cycles} while our cycles give relations between certain sums of $CM$ cycles - which need not be the full Heegner cycle - so to get Heegner cycles we should consider the sum $\Xi_{\Delta_1}=\sum_c \Xi_{\Delta_1}^c$.  It seems reasonable to conjecture

\begin{conj} 
	\label{conj2}
	There is a weakly holomorphic modular form $f_{\Delta_1}$ of weight $-\frac{1}{2}$ and representation $\rho_L$, where $L$ is the transcendental lattice of $H^2$ of the universal Abelian surface over $X_{\Delta}$ such that 
	
	$$\langle \reg(\Xi_{\Delta_1}),\Omega_{(z,h)} \rangle=\tilde{\Phi}_{\Delta}^1(z,h,f_{\Delta_1})=\sum a(z_{\Delta}(m,h)) 4\sqrt{m} G_2(z_{\Delta}(m,h),(z,h))$$
where $\Omega_{(z,h)}$ is the $(1,1)$ form corresponding to the symmetric square in the fibre over $(z,h)$ and is the class of the $CM$ cycle at a $CM$ point. 		
\end{conj}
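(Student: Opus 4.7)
The plan is to write both sides of the conjectured equality as finite linear combinations of values $G_2^{X_{\Delta}}(\tau,(z,h))$ at CM points $\tau$, and then to match the coefficients. The left-hand side is handled by the localization/regulator machinery of Theorem \ref{algebraicity}, applied to the universal family over $X_{\Delta}$ instead of over $X$: combining Theorem \ref{boundary} (which controls $\partial \Xi_{\Delta_1}^c$ on each component of $H_{\Delta_1} \cap X_{\Delta}$) with Lemma \ref{decomp} (which absorbs the contribution of the generic N\'eron--Severi classes into a decomposable correction), one obtains
$$\langle \reg(\Xi_{\Delta_1}),\Omega_{(z,h)}\rangle=\sum_{\tau \in H_{\Delta_1}\cap X_{\Delta}} a_{\tau}\, G_2^{X_{\Delta}}(\tau,(z,h)),$$
with coefficients $a_{\tau}$ extracted from the boundary calculation. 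The right-hand side is handled directly by Bruinier--Ehlen--Yang's Theorem 7.11 (quoted just above the statement), which for any $g \in M^!_{-1/2,\rho_L}(\Gamma')$ expresses $\tilde{\Phi}_{\Delta}^1(z,h,g)$ as a sum of terms $4\sqrt{m}\, G_2^{X_{\Delta}}(z_{\Delta}(m,h'),(z,h))$ with coefficients read off the principal part of $g$ through the Zagier lift $Za^1_d$.

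Second, I would identify the CM points appearing in $\partial \Xi_{\Delta_1}$ with a twisted Heegner divisor. The Humbert--Birkenhake--Wilhelm classification determines the components of $H_{\Delta_1}$ combinatorially in terms of the configuration $(\CP^2_A,l^1,\dots,l^6)$, and intersecting with the modular-type curve $X_{\Delta}$ should cut out a union of twisted Heegner points $z_{\Delta}(m,h)$ indexed in an arithmetic progression governed by $\Delta_1$ and $\Delta$. Summing $\Xi_{\Delta_1}=\sum_c \Xi_{\Delta_1}^c$ over components symmetrizes the boundary, and a component-by-component matching should identify $\{a_{\tau}\}_{\tau}$ with the collection $\{a(z_{\Delta}(m,h)) \cdot 4\sqrt{m}\}_m$ for suitable $m$. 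The weight $-1/2$ weakly holomorphic form $f_{\Delta_1}$ would then be produced via Borcherds' obstruction / Serre duality argument, exploiting surjectivity of the principal-part map $M^!_{-1/2,\rho_L} \twoheadrightarrow$ (allowed principal parts) modulo pairings with $S_{5/2,\rho_L}(\Gamma')$ -- this is exactly the duality encoded in the short exact sequence
$$0 \longrightarrow M^!_{-1/2,\rho_L}(\Gamma') \longrightarrow H_{-1/2,\rho_L}(\Gamma') \stackrel{\xi}{\longrightarrow} S_{5/2,\rho_L}(\Gamma') \longrightarrow 0$$
that the paper advertises as the mock-modular analogue of the motivic localization sequence.

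The main obstacle is the existence of $f_{\Delta_1}$ as a genuinely \emph{weakly holomorphic} form, not merely a harmonic Maass form. Concretely, one must show that the boundary coefficients $a_{\tau}$, viewed as the prescribed principal part of a candidate form, pair to zero against every cusp form in $S_{5/2,\rho_L}(\Gamma')$. This is a modularity assertion for the generating series of the numbers $a_{\tau}$, and it is essentially a codimension-two analogue of the Gross--Kohnen--Zagier modularity theorem for Heegner divisors; absent a direct theta-lift construction of $f_{\Delta_1}$ from the Humbert cycle $H_{\Delta_1}$, it seems to require exactly that modularity of Humbert cycles on the Siegel threefold that motivates the conjecture in the first place. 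A subsidiary obstacle is pinning down the constants $a_{\tau}$: Theorem \ref{boundary} guarantees only $a_{\tau}\neq 0$, whereas matching with $4\sqrt{m}$ on the Borcherds side requires an explicit formula, and isolating it will involve tracking both the enumerative counts $n_{d;k,m_1,\dots,m_r}$ from Theorem \ref{enumgeom} and the local intersection multiplicities of the deformed rational curve with the sextic $S_z$ as $z$ degenerates onto $H_{\Delta_1}^c$.
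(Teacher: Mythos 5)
The statement you are trying to prove is explicitly a conjecture; the paper offers no proof, only a brief heuristic in the subsequent ``Evidence'' subsection. Your proof sketch reproduces exactly that heuristic in spirit -- express both $\langle \reg(\Xi_{\Delta_1}),\Omega_{(z,h)}\rangle$ and $\tilde\Phi^1_\Delta(z,h,f_{\Delta_1})$ as linear combinations of values $G_2(\tau,(z,h))$ with $\tau$ running over the points of $H_{\Delta_1}\cap X_\Delta$, and then argue the data should coincide -- so on structure you and the paper agree. What you get wrong is the location of the difficulty.

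You single out the existence of $f_{\Delta_1}$ as a genuinely weakly holomorphic (rather than merely harmonic Maass) form of weight $-\tfrac{1}{2}$ as ``the main obstacle,'' to be produced via a Borcherds--Serre duality/obstruction argument. But in the paper's Evidence subsection this existence is taken as known: it cites \cite{hemu} for a weakly holomorphic form of weight $-\tfrac{1}{2}$ in $\rho_L$ with principal part $q^{-\Delta_1}$. The short exact sequence $0 \to M^!_{-1/2,\rho_L} \to H_{-1/2,\rho_L} \to S_{5/2,\rho_L}\to 0$ you invoke is in the paper as a \emph{structural analogue} of the localization sequence, not as the engine producing $f_{\Delta_1}$. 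So the obstruction you flag is not the binding constraint and effort spent there is misdirected.

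The genuine gaps are the ones you call ``subsidiary.'' Theorem \ref{boundary} gives $\partial(Z^c_\Delta) = a(\bar C^1_H - \bar C^2_H)$ for some \emph{unspecified} $a\neq 0$, and Lemma \ref{decomp} allows one to subtract decomposables so that the boundary is a scalar multiple of the CM cycle at each $\tau$; but establishing that, after summing over components $c$, the result is the twisted Heegner cycle $z_\Delta(m,h)$ with precisely the multiplicities $4\sqrt{m}\,a(z_\Delta(m,h))$ of the Bruinier--Ehlen--Yang normalization is the content of the conjecture, and nothing in Theorems \ref{motiviccycle} and \ref{boundary} controls those constants. Two further points that your sketch elides: (i) matching singular loci and even matching the multiset of CM points does not by itself force the two sides to be equal as functions of $(z,h)$ -- one needs a rigidity or uniqueness statement (Zhang's normalization of the Green's current, plus the fact that a $\partial\bar\partial$-exact difference vanishes against the relevant spanning family of $(1,1)$-forms) to close the gap between ``same singularities'' and ``same function''; and (ii) the enumerative count $n_{d;k,m_1,\dots,m_r}$ from Theorem \ref{enumgeom} enters the construction of $\Xi_{\Delta_1}$ before one even gets to the coefficients $a_\tau$, and the paper itself flags (in the remark after Theorem \ref{motiviccycle}) that the role of this multiplicity in the motivic cycle is unclear. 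Any honest attempt at the coefficient matching would have to absorb that count, and there is no reason visible in the argument why it should track the $\sqrt{m}$ factor.

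In short: your sketch is a reasonable account of the \emph{shape} of the argument the conjecture presupposes, and is consistent with the paper's own evidence, but you should not present it as a proof sketch in the usual sense -- it is a restatement of the conjectural correspondence together with a mis-prioritized list of obstacles. The hard work is quantitative identification of $\partial\Xi_{\Delta_1}$ with a twisted Heegner cycle including multiplicities, not existence of $f_{\Delta_1}$.
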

Implicitly, in the opposite direction we can conjecture
\begin{conj}
	\label{conj3}
 Given a weakly holomorphic modular form $g$ in $M^!_{\frac{1}{2}-j,\rho_L}(\Gamma)$ there exists a motivic cycle $\Xi_g$ in $H^{2j+1}_{\M}(A_{\eta}^j,\Q(j+1))$ with boundary supported on the $CM$ points with discriminant determined by the principal part of $g$ such that 
 $$\langle \reg{\Xi_g},\Omega_{(z,h)}\rangle = \tilde{\Phi}_{\Delta}^j((z,h),g)$$
\end{conj}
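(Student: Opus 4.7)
The plan is to combine an extension of the geometric cycle construction from Theorem \ref{motiviccycle} to higher codimension with an identification of the resulting regulator as a Borcherds lift. First I would exploit the linearity on both sides: decompose $g$ into elementary weakly holomorphic forms $g_{m,\mu}$ whose principal parts are supported at a single Heegner datum, so that a proof of the conjecture for each $g_{m,\mu}$ would, on summation, yield the full statement. For each such elementary piece, the conjectured boundary of $\Xi_{g_{m,\mu}}$ is a prescribed $CM$ cycle in $H^{2j}_{\M}(A_z^j,\Q(j))$ living in a special fibre, so the localization sequence reduces the existence problem to lifting this boundary to a class in $H^{2j+1}_{\M}(A_\eta^j,\Q(j+1))$.

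For the construction of $\Xi_{g_{m,\mu}}$ itself, I would generalize the Birkenhake--Wilhelm picture. In the case $j=1$ treated in Theorem \ref{motiviccycle}, an exceptional rational curve in the Kummer plane was deformed using the Kontsevich--Manin count $n_d$ to produce an indecomposable cycle with prescribed boundary via Proposition \ref{construction}. For higher $j$ one needs an analogous exceptional subvariety of codimension $j$ inside $A^j$ (or its Kummer resolution) whose class pairs nontrivially with $S_z^j$ in the distinguished fibre. Natural candidates are symmetric combinations of self-products of the extra curves $D_\Delta$, fibered by the Hecke correspondences whose graphs index the principal part of $g$. One must then verify, by an enumerative argument in the style of Theorem \ref{enumgeom}, that such subvarieties deform over the full moduli and become reducible precisely on the divisor prescribed by the principal part of $g$; the motivic cycle is then assembled from the rational irreducible components together with the appropriate exceptional fibres, exactly as in the proof of Theorem \ref{motiviccycle}.

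To match the regulator with the Borcherds lift $\tilde{\Phi}_\Delta^j((z,h),g)$, I would invoke Zhang's extension to higher weight, which asserts that the Green current of the $CM$ cycle $S_\tau^k$ paired with $\eta_y^k$ equals $\tfrac{1}{2}G_k^X(\tau,y)$. By the same argument as in Theorem \ref{algebraicity}, the real regulator of $\Xi_g$ decomposes as a linear combination of such Green currents indexed by the boundary $CM$ cycles, yielding $\sum a(z_\Delta(m,h)) m^{j/2} G_{j+1}(z_\Delta(m,h),(z,h))$. The Bruinier--Ehlen--Yang identification $\tilde{\Phi}_\Delta^j((z,h),f_m) = 4 m^{j/2} G_{1+j}(z_\Delta(m,h),(z,h))$, combined with summation against the Zagier-lifted coefficients of $g$, would then force the two sides to coincide up to a global normalization that can be pinned down on a single test case.

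The principal obstacle is the existence of $\Xi_g$ with the prescribed boundary. For $j=1$ the construction rested on the classical fact that any $3d-1$ points determine a rational curve of degree $d$, providing an abundant supply of deformations of an exceptional rational curve. There is no comparable ``free'' enumerative theorem for rational subvarieties of codimension $j\geq 2$ in the Kummer variety of $A^j$, so genuinely new input (either a direct algebraic construction in the style of Collino and Spiess, or a degeneration argument from a cycle produced on a related Shimura variety) is required. A secondary difficulty is that the Borcherds lift is defined only up to a holomorphic ambiguity while the regulator pairing is defined only modulo decomposable cycles; an application of Lemma \ref{decomp} should reconcile the two, but the comparison of normalizations will require a careful analysis of the principal parts of both sides near the Heegner components of the boundary divisor.
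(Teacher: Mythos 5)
The statement you were asked to prove is stated in the paper as a \emph{conjecture}, not a theorem, and the paper does not contain a proof of it. The paper only offers partial evidence: the case $j=0$ is Borcherds' theorem, and for $j=1$ the paper constructs the cycles $Z_{\Delta_1}=\sum_c Z_{\Delta_1}^c$ and observes that their boundary, the Green's functions in their regulator, and the singular locus of the Borcherds lift all agree qualitatively, without establishing the precise equality $\langle\reg(\Xi_g),\Omega_{(z,h)}\rangle=\tilde{\Phi}_\Delta^j((z,h),g)$. For $j\geq 2$ the paper offers nothing beyond an external-product heuristic, and even there it observes that the resulting cycles appear to be decomposable and hence do not contribute to $G_{j+1}$-algebraicity.

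Your proposal accurately reconstructs what genuine evidence the paper marshals (Zhang's $G_k$ identity, the Bruinier--Ehlen--Yang identification of the twisted Borcherds lift, Lemma \ref{decomp} for stripping off decomposables, and the $j=1$ Kummer-plane deformation argument), and you correctly and honestly identify the principal obstruction: there is no analogue of Theorem \ref{enumgeom} furnishing a ``free'' family of exceptional codimension-$j$ rational subvarieties in $A^j$ that degenerate over prescribed Heegner divisors. This is precisely where the paper itself stops short, so your assessment is consistent with the paper's framing; the statement remains open, and both you and the paper leave the existence of $\Xi_g$ for $j\geq 2$ as the unresolved heart of the matter. One secondary point worth sharpening if you pursue this further: for the $j=1$ case the paper only proves that the boundary of $Z_{\Delta_1}^c$ is supported on the correct locus and is a nonzero multiple of $\BC^1_H-\BC^2_H$ (Theorem \ref{boundary}); it does not determine that multiple, nor the normalization constant $a$, so even the $j=1$ agreement with $\tilde{\Phi}_\Delta^1$ would need the ``single test case'' calibration you describe, and the paper has not carried it out.
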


One could also link this to the earlier conjecture by speculating that the motivic cycle constructed here should be obtained by a geometric construction - sometimes merely by restrition - from the motivic cycle conjectured to exist above.

\subsubsection{Evidence.}

When $j=0$ the conjecture suggests that given a weakly holomorphic modular form of weight $\frac{1}{2}$ there should be an element in the group $H^1_{\M}(\eta,\Q(1))$, where $\eta$ is the generic point of the modular curve. Elements of the group $H^1_{\M}(\eta,\Q(1))$ are simply functions and so the question is whether one can construct  `interesting' functions on modular curves from weakly holomorphic modular forms of weight $\frac{1}{2}$. This is precisely what is done in Borcherds \cite{borc}.  The functions have divisors on Heegner divisors and cusps. Another related construction is in Zagier \cite{zagi1984} - where he constructs functions with divisors on modular forms by restricting a function with divisor supported on  modular curves on a Humbert surface.

When $j=1$ we know there is a weakly holomorphic modular form $f_{\Delta_1}$ of weight $-\frac{1}{2}$ with principal part $\frac{1}{q^{\Delta_1}}$ \cite{hemu}. The Borcherds lift 
$$\tilde{\Phi}_{\Delta}^1(z,h,f_{\Delta_1})$$
will be a sum of Green's functions corresponding to certain Heegner cycles determined by $\Delta_1$. The singularities of this function are at those points where there is a cycle of Humbert invariant $\Delta_1$. The Green's functions appearing are the Green's currents of certain $CM$ cycles determined by $\Delta_1$. 

On the other hand the cycle 
$$Z_{\Delta_1}=\sum_c Z_{\Delta_1}^c$$ 
restricted to $X_{\Delta}$ (or alternately, constructed on the universal family over $X_{\Delta}$) has a boundary on the same set -- the points where there is a cycle of invariant $\Delta_1$. Further, up to a decomposable element, the boundary is a linear combination of $CM$ cycles determined by $\Delta_1$. Finally, the regulator of this cycle, evaluated on a suitable $(1,1)$ form can be expressed in terms of Green's functions of weight $2$. This suggests that the cycle associated to the form $f_{\Delta_1}$ is the cycles $Z_{\Delta_1}$.

\subsubsection{An example: Computation of algebraicity.}\label{computation}

In Section \ref{z5construction} we constructed the motivic cycle $Z_5$ in the group $H^3_{\M}(\TK_A,\Q(2))$. In this section we use it to get an algebraicity result as described above. The idea is to restrict this cycle to the universal family over a modular curve and then intersect with certain CM fibres which are amenable to computation. For the cycle to restrict to the generic fibre we have to assume that the modular curve does not lie on $H_5$.The CM cycles we consider are those coming from the intersection of the modular curve with a component of $H_4$, so we further assume the modular curve does not lie on $H_4$. 

If $A$ is the generic fibre of over a modular or Shimura curve $X$, the rank of the Neron-Severi group is $3$. One generator is given by the genus $2$ curve. Let $C_1$ and $C_2$ be cycles representing the other two classes and let $\Delta_1$ and $\Delta_2$ be their Humbert invariants. Then $X$ can be realised as a component of $H_{\Delta_1} \cap H_{\Delta_2}$. Conversely, Hashimoto \cite{hash} showed that given a modular curve one can make a choice of generators so that it can be realised as a component of the intersection of two Humbert surfaces. 

If $A=A_z$ is a special fiber where the Picard number increases to  $4$ then a theorem of Shioda and Mitani\cite{shmi} states that $A_z=E_{1,z} \times E_{2,z}$ where $E_{1,z}$ and $E_{2,z}$ are isogenous elliptic curves with CM by the same field $\Q(\sqrt{\Delta})$. Note that is is not necessarily as a principally polarised abelian variety - the polarization on $A$ could still be irreducible. In particular, though, there is a $CM$ cycle - a generator of the orthogonal complement of the generic Neron-Severi group. 

Further, the theorem implies that there is an elliptic curve on $A_z$ and so $A_z$ has to lie on $H_{n^2}$ for some $n$. An elliptic curve on an Abelian surface will map to a rational curve on $K_A$ which passes through three of the points $q^{ij}$. On $\CP^2_A$ this is a rational curve of degree $n-1$. The complete characterization is once again due to Humbert \cite{biwi}[Theorem 7.5]. In particular, in the case $n=2$  the converse also holds. There is a line passing through three of the points $q^{ij}$  if and only if  there is an elliptic curve of degree $2$ on $A$ or equivalently the moduli point of $A$ lies on a component of $H_4$. 

Let $E$ be the elliptic curve, $L_1$  the rational curve that it maps to in $K_A$ and $\tilde{L}_1$ be strict transform of $L_1$ in $\TK_A$. There will always be a `conjugate' elliptic curve, namely the kernel of the map from $A$ to $E$, and it will be of the same degree. Let $L_2$ denote its image in $K_A$ and $\TL_2$ its strict  transform in $\TK_A$. 

There is another characterization for  the moduli point of $A$ to lie on a component of $H_4$, once again due to Humbert  \cite{hamu}[Theorem 2.12]

\begin{prop} If $A=J(C)$ is an Abelian surface where $C$ is the curve
	$$C: Y^2=X(X-1)(X-a_1)(X-a_2)(X-a_3)$$
	Then there exists a conic $Q_2$ passing through the four points $q^{12},q^{23},q^{35}$ and $q^{51}$ and tangent to the lines $l^4:y=0$ and $l^6:z=0$ if 
	$$(a_1a_3-a_2)^2 \times \{4a_1a_2a_3((a_1+a_3)(a_2+1)-2(a_1a_3+a_2))^2-(a_2-1)^2(a_1-a_3)^2(a_1a_3+a_4)^2\}=0$$
	Further, if the first factor is $0$ the moduli point lies on a component of $H_4$ and if the second factor is $0$ the moduli point lies on a component of $H_8$.
	
\end{prop}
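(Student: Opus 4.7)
The plan is to reduce the existence of such a conic to a linear-algebra computation followed by a resultant, using the explicit coordinates of the $q^{ij}$ already recorded in the excerpt. With $a_4 = 0$ and $a_5 = 1$, the four base points are $q^{12} = [-(a_1+a_2), 2a_1 a_2, 2]$, $q^{23} = [-(a_2+a_3), 2a_2 a_3, 2]$, $q^{35} = [-(a_3+1), 2a_3, 2]$, and $q^{51} = [-(a_1+1), 2a_1, 2]$. Since the space of conics in $\CP^2$ is $5$-dimensional, passage through four points cuts out a pencil $\lambda Q_0 + \mu Q_1$ with $[\lambda:\mu] \in \CP^1$, and I would first produce explicit generators $Q_0, Q_1$ with coefficients in $\Q[a_1, a_2, a_3]$ by solving the $4 \times 6$ linear system obtained from the four incidence relations.

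Next, following the tangency criterion used earlier in the excerpt, the general conic $p_1 x^2 + p_2 y^2 + p_3 z^2 + p_4 xy + p_5 xz + p_6 yz = 0$ is tangent to $l^4: y = 0$ iff the substitution $y=0$ yields a perfect square, i.e.\ iff $D_4 := p_5^2 - 4 p_1 p_3 = 0$, and tangent to $l^6: z=0$ iff $D_6 := p_4^2 - 4 p_1 p_2 = 0$. Plugging the pencil parameterisation $p_i(\lambda,\mu)$ into $D_4$ and $D_6$ turns each into a binary quadratic form in $(\lambda,\mu)$ over $\Q[a_1,a_2,a_3]$, and a conic satisfying both tangencies exists in the pencil precisely when these two quadratics share a root, i.e.\ when their Sylvester resultant $R(a_1,a_2,a_3) := \mathrm{Res}_{(\lambda,\mu)}(D_4, D_6)$ vanishes. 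The core computation is to evaluate this $4\times 4$ determinant, extract the factors contributed by degenerate members of the pencil (notably the reducible conic $l^4 \cdot l^6$ and the pairs of lines cutting out pairings among the four base points), and identify the remaining primitive factor with the polynomial displayed in the statement.

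Once $R$ is shown to factor as the stated product, the final step is to identify each factor with a Humbert surface. For $(a_1 a_3 - a_2)^2$, I would exhibit a line in $\CP^2_A$ through three of the $q^{ij}$ not all lying on a common base line $l^k$; by the characterisation of $H_4$ recalled just above the proposition (a line through three of the $q^{ij}$ is equivalent to an elliptic curve of degree $2$ on $A$, hence to $H_4$), this places the moduli point on a component of $H_4$. For the second factor, I would match its vanishing locus with the Birkenhake--Wilhelm case $\Delta = 8$, which corresponds to a rational curve of degree $d=2$ passing through a prescribed subset of the $q^{ij}$ and meeting the remaining $l^k$ with even multiplicity, and verify that the particular tangency-to-$l^4$-and-$l^6$ configuration at hand realises exactly such an exceptional curve. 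The principal obstacle is the bookkeeping in the resultant computation: the Sylvester determinant is a polynomial of fairly high total degree in $a_1, a_2, a_3$ and several spurious factors appear from degenerate conics in the pencil, so separating them cleanly enough that what remains is precisely the stated polynomial is the delicate step, best handled with symbolic algebra and cross-checked against the Humbert/Birkenhake--Wilhelm geometric picture.
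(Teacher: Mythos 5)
The paper does not actually prove this proposition; it is cited verbatim from Hashimoto--Murabayashi \cite{hamu} (Theorem~2.12), and their argument is precisely the classical Humbert-style computation you outline: impose the four incidence conditions to cut out a pencil, write each tangency as a binary quadratic in the pencil parameter, require a common root via the Sylvester resultant, and then factor. Your discriminant criteria $D_4=p_5^2-4p_1p_3$ and $D_6=p_4^2-4p_1p_2$ are correct (the latter agrees with the $p_4^2-4p_1p_2$ already used in the $H_5$ case in the excerpt), and the final identification of the factor $(a_1a_3-a_2)^2$ with $H_4$ via a line through three $q^{ij}$, and of the other factor with $H_8$ via the Birkenhake--Wilhelm case $d=2$, $\Delta=8$, is exactly the right geometric interpretation.

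One small slip: the reducible conic $l^4\cdot l^6 = \{yz=0\}$ is \emph{not} a member of the pencil, since none of the four base points $q^{12},q^{23},q^{35},q^{51}$ lies on $l^4$ or $l^6$ for generic $a_i$; so it does not contribute a spurious factor. The only degenerate members of the pencil are the three line-pairs joining the base points in pairs, and here two of them simplify nicely because the four base points lie pairwise on the configuration lines: $\overline{q^{12}q^{23}}\cdot\overline{q^{35}q^{51}}=l^2\cdot l^5$ and $\overline{q^{12}q^{51}}\cdot\overline{q^{23}q^{35}}=l^1\cdot l^3$. Neither of these is tangent to $l^4$ and $l^6$ generically, so they need not produce extra factors either; whatever cancellation occurs in the resultant should be checked symbolically rather than anticipated from degenerate members. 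Also note the displayed formula has an apparent typo ($a_1a_3+a_4$ should almost certainly read $a_1a_3+a_2$, since $a_4=0$ in this normalization), which you would discover and fix in the course of the computation.
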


Hence if we assume that $a_1a_3-a_2=0$ there should be an exceptional line. A small calculation shows that  $q_{13}=[-(a_1+a_3),2a_1a_3,2], q_{25}=[-(a_2+1),2a_2,2]$ and $q_{46}=[-1,0,0]$ lie on the  line 
$$l: Y-a_1a_3Z=0$$
In the affine plane given by $Z=1$ this is simply the line $[X,a_1a_3,1]$. Let $H_4^{c'}$ be the component on which this line exists. 
The preimage of this line in $K_A$ is a double cover of $\CP^1$ which has nodes  at the ramification points. Hence is two rational curves meeting at those points. 
One of those curves is $L_1$ and the other is its `conjugate', $L_2$.

In general if the moduli point of $A_z$, $z$,  lies on a Humbert surface then it will determine a relation among the $a_1,a_2$ and $a_3$ - though perhaps not as simple a relation as those determined by $H_4$ and $H_5$. Hence if $z$ lies on a modular curve realised as a component of the intersection of two Humbert surfaces then the generic fibre will be the Jacobian of a genus $2$ curve with $2$ relations among the $a_1,a_2$ and $a_3$. Since we have assumed that neither of the two Humbert surfaces is $H_4$  the set of points where $a_2=a_1 a_3$ will be certain $CM$ points - namely the points of intersection of $H_4$ with the modular curve.  Let $S_z$ denote the $CM$ cycle in $A_z$. It will map to $(L_1-L_ 2)$ or $(L_2-L_1)$.

Since we have further  assumed that neither of the Humbert surfaces are $H_5$ the cycle $\Xi_5$ restricts to give an element of $H^3_{\M}(A_{\eta},\Q(2))$ where $A_{\eta}$ is the generic fibre of universal family over the modular curve. Let $Z_5$ be its image in $H^3(\TK_{A_{\eta}},\Q(2))$. Then by the argument above, the regulator of $Z_5 \cap (\TL_1-\TL_2)$ will be the logarithm of an algebraic number. On the other hand, it is the value of the Green's function 
$$G_2^5(z)=\sum a_d G_2(z_d,z)$$
where $z_d$ is a point on $H_5^c \cap X$ and the  $\partial(\Xi_5)=\sum a_d S_d$  where $S_d$ is the $CM$ cycle at $z_d$ and $z$ is the point corresponding to $A$. 
 We compute this value using the description of $Z_5$ in Section \ref{z5construction}.

\subsubsection{Intersections.} 

The double cover $L$  of this line is a smooth rational curve of degree $2$ ramified at the three points $q^{13}, q^{25}$ and $q^{46}$ and passing through $\phi(0)$. Its normalization in $\TK_A$ is an unramified double cover of $\CP^1$ and is hence two smooth conics. Let $\TL_1$ and $\TL_2$ be the two smooth conics and $L_1$ and $L_2$ their image in $K_A$. The strict transform of the  image of the $CM$ cycle is $\TL_1-\TL_2$ or the other way around. Let us assume the former.  Hence the value we are interested in is 
$$f(\TL_1 \cap \TC) / f(\TL_2 \cap \TC).$$
These points  lie over $l \cap Q_1$. Since the point at infinity on $l$ is $[1,0,0]$ and this does not lie on $Q_1$ we may assume that $z=1$.  Let 
$$c_1=[x_1,a_1 a_3,1] \text{ and } c_2=[x_2,a_1 a_3,1]$$
be the two points lying on $l\cap Q_1$. Then the points in $L_i \cap C$ lying over them are 
$$c_1^{\pm}=[x_1,a_1 a_3,1,\pm \sqrt{S(x_1,a_1 a_3,1)}] \text{ and } c_2^{\pm}=[x_2,a_1 a_3,1,\pm \sqrt{S(x_1,a_1 a_3,1)}]$$
One of $c_1^{\pm}$ and $c_2^{\pm}$ will lie on $L_1$ and the other two will lie on $L_2$. Since the normalization in an unramified double cover one of the components correspond to the positive square root of $S(x,a_1 a_3,1)$ and the other corresponds to the negative one. So we may assume that $c_1^+$ and $c_2^+$ lie on $L_1$ and $c_1^-$ and $c_2^-$ lie on $L_2$.  Since these points are {\em not} nodal points  there is precisely one point lying over each of them in $\TK_A$. 
 
To find the $c_i$ it amounts to finding $x_i$. To find $x_1$ and $x_2$ we have to solve the quadratic equation 
$$AX^2+BX+C=0$$
where $A=p_1=4(a_1a_3)^2(a_1-a_1a_3)$, 
$B=p_4a_1a_3+p_5=$ and  $C=p_2(a_1a_3)^2+p_3+p_6a_1a_3$, so the points $x_1$ and $x_2$ are 
$$x_i=\frac{-B\pm\sqrt{B^2-4AC}}{2A}.$$
and the two points of intersection of $l$ with $Q_1$ are 
$$c_1=[x_1,a_1 a_3,1] \text{ and } c_2=[x_2,a_1 a_3,1]$$
The points lying over them in $K_A$ are, as above,
$$c_i^{\pm}=(x_i,y,1,w_i^{\pm})=(x_i,a_3,1,\pm \sqrt{S(x_i,a_1a_3,1)}$$
From this we can find the corresponding value of 
$$v_i^{\pm}=\frac{w_i^{\pm}}{(x_i+\frac{1}{2})}$$ 
and 
$$f_P(v_i^{\pm})=c\frac{v_i^{\pm}-v_0^+}{v_i^{\pm}-v_0^-}$$ 
These values are functions of $a_1$ and $a_3$ and are well defined on the region of $H_4^{c'} \setminus H_5^c$. Finally the number we are interested in is 
$$\frac{f_P(v_1^+)f_P(v_2^+)}{f_P(v_1^-)f_P(v_2^-)}$$

The values of $a_1$ and $a_3$ are determined by the fact that the moduli point of $A$ lies on a modular curve

		\section{Complements.}

		\subsubsection{The `modular' complex.}
		
		Let $X_{\Delta}$ be the modular curve considered above, $A_{\eta}$ the generic fibre of the universal family of Abelian surfaces (in this case $E \times E$) and $A_x$ the special fibre over a point $x$. 
		
		Let $\Gamma'=Mp_2(\ZZ)$ as before. Let $L$ be the  transcendental lattice of $H^2(A_{\eta},\ZZ)$. Since the generic Picard number is $3$, this is an even integral lattice of type $(2,1)$. 
		
		Let 
		$$H^{2j+2}_{\M}(W^j,\Q(j+1))_{CM}=\Ker:\{ H^{2j+2}_{\M}(W^j,\Q(j+1))_{\hom} \longrightarrow H^{2j+2}_{\M}(A_{\eta}^j,\Q(j+1))\}$$
		This is the subgroup of null homologous codimension $(j+1)$ cycles generated by the $CM$ cycles.  A priori it is only contained in the kernel, but from the  Schoen \cite{scho} one knows that integrally the difference consists of cycles supported on cusps. A version of the Manin-Drifeld theorem shows that these cycles are torsion and so rationally they are $0$.   
		
		Conjectures \ref{conj2} and \ref{conj3} above link two different sequences - one coming from $K$-theory and the other coming from the theory of modular forms. On one hand, one has the localization sequence in motivic cohomology,
		$$0  \longrightarrow H^{2j+1}_{\M}(A^{j}_{\eta},\Q(j+1))_{ind} \stackrel{\partial}{\longrightarrow} \bigoplus_{x \in X_{\Delta}^{CM}} H^{2j}_{\M}(A^{j}_x,\Q(j)) \longrightarrow H^{2j+2}_{\M}(W^{j},\Q(j+1))_{CM} \longrightarrow 0 $$
		where $X_{\Delta}^{CM}$ denotes the set of $CM$ points on $X_{\Delta}$. On the other, one has a sequence in the theory of modular forms \cite{BEY} Section 3.2,
		$$ 0 \longrightarrow M^!_{\frac{1}{2}-j,\rho_L}(\Gamma') \longrightarrow H_{\frac{1}{2}-j,\rho_L}(\Gamma') \stackrel{\xi_{\frac{1}{2}-j}}{\longrightarrow} S_{\frac{3}{2}+j,\rho_L}(\Gamma') \longrightarrow 0$$
		The conjecture asserts that the two sequences are related. One has maps
		
	\begin{center}

		\begin{tikzpicture}[baseline= (a).base]
		\node[scale=.87] (a) at (0,0){
		\begin{tikzcd}[cells={nodes={minimum height=3em}}]
			0 \arrow[r] & H^{2j+1}_{\M}(A^{j}_{\eta},\Q(j+1))_{\text{ind}}  \arrow[r,"\partial"] \arrow[d,"\reg"] & \bigoplus_{x \in X_{\Delta}^{CM}} H^{2j}_{\M}(A^{j}_x,\Q(j+1))  \arrow[r] \arrow[d,"g_x"] &  H^{2j+2}_{\M}(W^j,\Q(j+1))_{CM}  \arrow[r]\arrow[d,"AJ"] & 0 \\
			& \GG^{0}_{j+1}(X) \arrow[r] &  \GG_{j+1}(X) \arrow[r] &  AJ^{j+1}(X)  \\
			0 \arrow[r] & M^!_{\frac{1}{2}-j,\rho_L}(\Gamma') \arrow[u,"\Phi_j"] \arrow[r] & H_{\frac{1}{2}-j,\rho_L}(\Gamma') \arrow[u,"\Phi_j"] \arrow[r,"\xi_{\frac{1}{2}-j}"] & S_{\frac{3}{2}+j,\bar{\rho}_L}(\Gamma') \arrow[r] \arrow[u] & 0
		\end{tikzcd}
	};
	
\end{tikzpicture}
\end{center}

	\begin{comment}	
		$$\begin{CD}
		0 @>>> H^{2j+1}_{\M}(A^{j}_{\eta},\Q(j+1))_{\text{ind}} @>\partial>> \bigoplus_{x \in X_{\Delta}^{CM}} H^{2j}_{\M}(A^{j}_x,\Q(j+1)) @>>> H^{2j+2}_{\M}(W^j,\Q(j+1))_{CM} @>>> 0 \\
		@. @V\text{$(\reg,\eta^j_y)$}VV @V\text{$(g_x,\eta^j_y)$}VV @V\text{Abel Jacobi}VV @. \\
		@. \GG^{0}_{j+1}(X) @>>> \GG_{j+1}(X) @>>> AJ^{j+1}(X) \\
		@. @A\text{Borcherds lift}AA @A\text{Borcherds lift}AA @AAA @. \\
		0 @>>> M^!_{\frac{1}{2}-j,\rho_L}(\Gamma') @>>> H_{\frac{1}{2}-j,\rho_L}(\Gamma') @>\xi_{\frac{1}{2}-j}>> S_{\frac{3}{2}+j,\bar{\rho}_L}(\Gamma') @>>> 0 
		\end{CD}$$
		\end{comment}
		where $\reg$  is the map which takes $\Xi$ to $\langle \reg(\Xi),\eta^j_{y} \rangle$ and similarly $g_x$ is the Green's current evaluated on $\eta^j_y$.  $\GG_{j+1}(X)$ is the space of Green's functions of degree  $j+1$ and $\GG_{j+1}^0(X)$ is the subspace of Green's functions arising from the regulator. The conjecture asserts that the vertical maps have the same image. This is true for the  middle column  thanks to the work of Bruinier-Ehlen-Yang \cite{BEY} when $X=X_{\Delta}$. The conjecture is that the left maps have the same image. 
		
		Ramakrishnan \cite{rama} introduced the notion of a `modular' complex - a subcomplex of the complex that defines motivic cohomology (or higher Chow groups) of a Shimura variety. This is the subcomplex defined by Shimura subvarieties, boundary components, Hecke translates of all of these and the relations between them. He constructs elements in this modular complex. In fact all known examples of non-trivial elements of the motivic cohomology of Shimura varieties lie in this complex. 
		
		Analogously, in Kuga fibre varieties over a  Shimura variety, one can consider the subcomplex generated by modular subvarieties and special cycles over them, like the $CM$ cycles. The conjecture suggests that there should be a corresponding sequence of modular forms and that the regulators of these cycles can be computed in terms of Borcherds lifts of modular forms. In particular, the regulator of the cycles constructed by \cite{rama} should be given by Borcherds lifts.

		\subsubsection{Remarks on other $K$-groups.}
		
		The localization sequence above is between $K_1$ of the generic fibre and $K_0$ of the special fibres. One can also consider other parts of the localization sequence relating $K_{i+1}$ of the generic fibre  and $K_i$ of the special fibre.  One can then speculate if there are other sequences relating modular forms to special elements of the $K$-theory of modular varieties. 
		
		Let $X$ be a modular curve defined over a field $K$. From a   weakly holomorphic modular form $g$ of weight $\frac{1}{2}$, the original Borcherds lift $G=\Phi(g)$ gives a function on the modular curve with divisor on the Heegner points determined by the principal part of $g$. Given two such forms $f$ and $g$  one can consider the Steinberg symbol $\{F,G\}$ which gives an element of $K_2(K(X))$. The tame symbol of these elements is supported on the $CM$ points appearing in the divisors of $F$ and $G$. One can define a `higher CM point' to be a pair $(z_{\tau},a_{\tau})$ where $z_{\tau}$ is a CM point and $a_{\tau}$ is an element of $k_{\tau}^*=K_1(z_{\tau})$. The cycles above give relations between such points. It would be interesting to see whether the regulator of such cyles can be computed in terms of a Borcherds lift. Further, whether one can find a condition in terms of $f$ and $g$ for when $\{F,G\}$ lifts to $K_2(X)$. 
		
		\subsubsection{Remarks on higher dimensions.}
		
		In this paper we construct indecomposable cycles in the group $H^3_{\M}(A,\Q(2))$ which we expect to be related to weakly holomorphic forms of weight $-\frac{1}{2}$. In general, we expect there to be interesting cycles in $H^{2j+1}_{\M}(A^j,\Q(j+1))$.
		
		There is an external product defined at the level of cycles. This was insprired by the work of Raviolo \cite{ravi}. If $Z_1=\sum (C_i,f_i)$ is a cycle in $Z^{a}(X,1)$ and $Z_2=\sum (D_j,g_j)$ is a cycle in $Z^b(Y,1)$ then 
		$$Z_1 * Z_2=\sum(C_i \times D_j,f_ig_j)$$
		is a cycle in $Z^{a+b-1}(X \times Y,1)$ which gives a motivic cycle in the group $H^{2(a+b-1)-1}_{\M}(X \times Y, \Q(a+b-1))$. 
		It should be remarked that it is not clear if this external product is defined at the level of the higher Chow groups or motivic cohomology groups - since it is not clear if a boundary multiplied by a cycle gives a boundary. However, in many cases (as in our case) we have an explicit description at the level of cycles where this product can be carried out. 
		
		For instance, in our case we have the cyle $Z_{\Delta}$ in $H^3_{\M}(A,\Q(2))$. $Z_{\Delta}*Z_{\Delta}$ gives a cycle in $H^5_{\M}(A^2,\Q(3))$. So this suggests a way to construct elements in the group we are interested in. 
		
		However, we would like `indecomposable' elements - which lead to relations between Heegner cycles.  The notion of indecomposability becomes a little more complicated as in the case of $j=1$ there are only products from $H^1_{\M}(A,\Q(1)) \otimes H^{2}_{\M}(A,\Q(1))$. In general if $A$ is defined over a number field $K$, one has  products 
		$$\bigoplus_{L/K} H^{2k+1}_{\M}(A_L^j,\Q(k+1)) \otimes  H^{2(j-k)}_{\M}(A_L^j,\Q(j-k)) \longrightarrow H^{2j+1}_{\M}(A_L^j,\Q(j+1))\stackrel{Nm^L_K}{\longrightarrow} H^{2j+1}_{\M}(A^j,\Q(j+1)) $$
		and a cycle is indecomposable if it does not lie in the image of any of these maps for any $k<j$. In fact one appears to have a decomposition
		$$\bigoplus H^{2k+1}_{\M}(A^j,\Q(k+1))_{ind} \otimes  H^{2(j-1k)}_{\M}(A^j,\Q(j-k))$$
		On each of these summands the regulator is given in terms of the higher Green's function $G_{k+1}$.
		
		It appears that the cycles obtained by the product above are not indecomposable in this sense. In the example above they lie in the image of $H^3_{\M}(A^2,\Q(2)) \otimes H^{2}_{\M}(A^2,\Q(1))$.

	\bibliographystyle{alpha}
	\bibliography{AlgebraicCycles.bib}
	
%	\begin{verse}
%		Ramesh Sreekantan\\
%		Indian Statistical Institute\\
%		8th Mile Mysore Road\\
%		Jnanabharathi, Bangalore 560059\\
%		Karnataka, India\\
%		{\bf email:} rameshsreekantan@gmail.com
		
%	\end{verse}

\end{document}